\crefname{equation}{}{}
\apptocmd{\sloppy}{\hbadness 10000\relax}{}{} 
\crefname{algocf}{Algorithm}{Algorithms}
\crefname{equation}{}{} 
\crefname{conjecture}{Conjecture}{Conjectures} 
\colorlet{refkey}{orange!20}
\colorlet{labelkey}{blue!30}
\crefname{algocf}{Algorithm}{Algorithms}
\let\originalleft\left
\let\originalright\right
\renewcommand{\left}{\mathopen{}\mathclose\bgroup\originalleft}
\renewcommand{\right}{\aftergroup\egroup\originalright}
\numberwithin{equation}{section}
\newtheorem{theorem}{Theorem}[section]
\newtheorem{proposition}[theorem]{Proposition}
\newtheorem{lemma}[theorem]{Lemma}
\crefname{claim}{Claim}{Claims}
\newtheorem*{question*}{Question}
\theoremstyle{definition}
\newtheorem{definition}[theorem]{Definition}
\newtheorem*{definition*}{Definition}
\theoremstyle{remark}
\newtheorem{remark}[theorem]{Remark}
\newcommand{\floor}[1]{\left\lfloor #1 \right\rfloor}
\newcommand{\ceil}[1]{\left\lceil #1 \right\rceil}
\newcommand{\mb}{\mathbb}
\newcommand{\mbf}{\mathbf}
\newcommand{\mbm}{\mathbbm}
\newcommand{\mc}{\mathcal}
\newcommand{\mr}{\mathrm}
\newcommand{\ol}{\overline}
\newcommand{\on}{\operatorname}
\title{Singularity of the $k$-Core of a Random Graph}
\author[Ferber]{Asaf Ferber}
\address{Department of Mathematics, University of California, Irvine.}
\email{asaff@uci.edu}
\author[Kwan]{Matthew Kwan}
\address{Department of Mathematics, Stanford University, Stanford, CA.}
\email{mattkwan@stanford.edu}
\author[Sah]{Ashwin Sah}
\author[Sawhney]{Mehtaab Sawhney}
\address{Department of Mathematics, Massachusetts Institute of Technology, Cambridge, MA 02139, USA}
\email{\{asah,msawhney\}@mit.edu}
\thanks{Ferber was supported in part by NSF grants DMS-1954395 and DMS-1953799. Kwan was supported by NSF grant DMS-1953990. Sah and Sawhney were supported by NSF Graduate Research Fellowship Program DGE-1745302.}
\begin{document}

\global\long\def\GG{\mathbb{G}}%

\begin{abstract}
Very sparse random graphs are known to typically be singular (i.e., have singular adjacency matrix), due to the presence of ``low-degree dependencies'' such as isolated vertices and pairs of degree-1 vertices with the same neighbourhood. We prove that these kinds of dependencies are in some sense the only causes of singularity: for constants $k\ge 3$ and $\lambda > 0$, an Erd\H os--R\'enyi random graph $G\sim\GG(n,\lambda/n)$ with $n$ vertices and edge probability $\lambda/n$ typically has the property that its $k$-core (its largest subgraph with minimum degree at least $k$) is nonsingular. This resolves a conjecture of Vu from the 2014 International Congress of Mathematicians, and adds to a short list of known nonsingularity theorems for ``extremely sparse'' random matrices with density $O(1/n)$. A key aspect of our proof is a technique to extract high-degree vertices and use them to ``boost'' the rank, starting from approximate rank bounds obtainable from (non-quantitative) spectral convergence machinery due to Bordenave, Lelarge and Salez.
\end{abstract}

\maketitle

\global\long\def\E{\mathbb{E}}%
\global\long\def\NN{\mathbb{N}}%
\global\long\def\RR{\mathbb{R}}%
\global\long\def\core{\mathrm{core}}%
\global\long\def\corank{\operatorname{corank}}%
\global\long\def\rank{\operatorname{rank}}%

\section{Introduction}\label{sec:introduction}
Let $M$ be an $n\times n$ random matrix with i.i.d.\ $\on{Bernoulli}(p)$ entries (meaning that each entry $M_{ij}$ satisfies $\Pr(M_{ij}=1)=p$ and $\Pr(M_{ij}=0)=1-p$). It is a classical theorem of Koml\'os\ \cite{Kom67,Kom68} that for constant $p\in (0,1)$ and $n\to \infty$, a random Bernoulli matrix is nonsingular \emph{with high probability} (``whp'' for short): that is, $\lim_{n\to\infty}\Pr(M\text{ is singular})=0$.

A huge number of strengthenings and variations of Koml\'os' theorem have been considered over the years. Notably, Tikhomirov \cite{Tik20} showed that for constant $0 < p\le 1/2$, the singularity probability is $(1-p+o(1))^n$. In an alternative direction, Costello, Tao, and Vu~\cite{CTV06} famously proved that a random \emph{symmetric} Bernoulli matrix is also nonsingular whp. Of course, a symmetric binary matrix can be interpreted as the adjacency matrix of a graph, so the Costello--Tao--Vu theorem has an interpretation in terms of random graphs: for constant $p\in(0,1)$, an Erd\H os--R\'enyi random graph $G\sim\GG(n,p)$ has nonsingular adjacency matrix whp\footnote{There is a slight difference between a random symmetric Bernoulli matrix and the adjacency matrix of a random graph: namely, the adjacency matrix of any graph has zeroes on the diagonal. However, the same techniques usually apply to both settings, and we will not further concern ourselves with this detail.}.

If $p$ decays too rapidly with $n$ (in particular, if $p\le (1-\varepsilon)\log n/n$ for some constant $\varepsilon>0$), then for reasons related to the coupon collector problem, a typical $G\sim\GG(n,p)$ has isolated vertices, meaning that its adjacency matrix has all-zero rows and is therefore singular. Costello and Vu~\cite{CV08} proved that $\log n/n$ is in fact a \emph{sharp threshold} for singularity, in the sense that if $p\ge (1+\varepsilon)\log n/n$ (and $p$ is bounded away from $1$) then a typical $G\sim\GG(n,p)$ has nonsingular adjacency matrix. This fact was later refined and generalised by Basak and Rudelson~\cite{BR18} and Addario-Berry and Eslava~\cite{AE14}. In particular, the latter authors proved a sharp \emph{hitting time} result: if we consider the random graph \emph{process} where we start with the empty graph on $n$ vertices and add random edges one-by-one, then whp at the very same moment where the last isolated vertex disappears our graph becomes nonsingular.

Even below the threshold $\log n/n$, it is natural to ask whether the only obstacles for singularity are ``local dependencies'' such as isolated vertices. In their aforementioned paper, Costello and Vu~\cite{CV08} actually proved that for $p\ge \left(1/2+\varepsilon\right)\log n/n$, whp the subgraph obtained by deleting isolated vertices is nonsingular. In follow-up work~\cite{CV10}, they extended analysis to the regime where $p\ge c\log n/n$ for any constant $c>0$; this necessitated the consideration of more sophisticated types of local dependencies than isolated vertices. The most obvious example is \emph{cherries}: pairs of degree-$1$ vertices with the same neighbour. Another obstruction is cycles of length divisible by $4$ in which every second vertex has no neighbours outside the cycle.

A natural way to systematically eliminate these kinds of local dependencies is to iteratively delete low-degree vertices. The \emph{$k$-core} $\core_{k}(G)$ of a graph $G$ is the subgraph obtained by iteratively deleting vertices with
degree less than $k$ (in any order). Equivalently, it is the largest induced subgraph with minimum degree at least $k$. At the 2014 International Congress of Mathematicians, Vu conjectured~\cite[Conjecture~5.6]{Vu14} that for constants $k\ge 3$ and $\lambda>0$, the $k$-core of a typical $G\sim \GG(n,\lambda/n)$ is nonsingular\footnote{The $k$-core can be empty (indeed, this is typically the case if $\lambda$ is below a certain threshold, as we will soon discuss). We follow the standard convention that the empty $0\times 0$ matrix, being an identity map, is nonsingular.}. He also repeated (a substantially weakened version of) this conjecture in his recent survey on combinatorial random matrix theory~\cite{Vu20}. In this paper we prove Vu's conjecture.

\begin{theorem}\label{thm:main} 
Fix constants $k\ge 3$ and $\lambda>0$. Then the adjacency matrix of the $k$-core of $G\sim\mb{G}(n,\lambda/n)$ is nonsingular with high probability.
\end{theorem}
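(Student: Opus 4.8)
The plan is to work over $\mathbb{R}$ (equivalently $\mathbb{Q}$, since the adjacency matrix $A$ is integer-valued) and to bootstrap from a soft approximate rank bound to an exact one. If $\lambda$ lies below the $k$-core threshold then $\core_k(G)$ is empty whp and the theorem holds by convention, so assume $\lambda$ is above threshold; then whp $\core_k(G)$ has $N=(1+o(1))\nu n$ vertices and $(1+o(1))\mu n$ edges for constants $\nu,\mu>0$. I would first record the structural facts about the $k$-core that the rest of the argument needs. Conditioned on its degree sequence, $\core_k(G)$ is a uniformly random (simple) graph with that degree sequence, so it can be analysed via the configuration model; its degree distribution has light (truncated-Poisson-type) tails, so it has $\Theta(n)$ vertices of each fixed degree $d\ge k$, maximum degree $\Theta(\log n/\log\log n)$, and $\Theta(n)$ vertices of degree at least $D$ for each constant $D$; and — crucially for the combinatorial part — it enjoys strong expansion: every set of at most $\delta N$ vertices spans at most $(1+o(1))$ times as many edges and has many external neighbours.

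Second, I would invoke the (non-quantitative) spectral convergence machinery of Bordenave, Lelarge and Salez. The $k$-core of $\GG(n,\lambda/n)$ converges in the local weak sense to an explicit unimodular Galton--Watson tree, whose offspring distribution is the size-biased-minus-one version of the limiting $k$-core degree distribution and which therefore has minimum degree at least $k\ge 3$ at every vertex. Their rank-convergence theorem then identifies $\lim_{n\to\infty}\corank(A)/N$ with the mass of the limiting spectral measure at $0$, and one checks this mass is zero (as it must be, given the theorem). The output of this step is the approximate bound: for every $\varepsilon>0$, whp $\corank(A)\le\varepsilon N$.

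Third, I would rule out ``structured'' kernel vectors by a first-moment argument built on expansion: whp $A$ has no nonzero kernel vector supported on at most $\delta N$ coordinates, and, more quantitatively, every kernel vector is ``spread'' — for instance, for a uniformly random bounded-size coordinate set $S$, the probability that $\sum_{i\in S}x_i=0$ is bounded away from $1$, uniformly over nonzero kernel vectors $x$. One proves this by unioning over candidate supports $T$ and bounding, for each $T$, the probability that the edges incident to $T$ are arranged so as to admit a kernel vector supported on $T$; minimum degree $\ge k\ge 3$ together with sparseness forces each such event to have probability $n^{-\Omega(|T|)}$, which absorbs the $\binom{N}{|T|}$ choices of $T$. (One needs the same conclusion for the graphs that arise during the exposure in the next step, which are of the same flavour.) This spreadness is exactly what makes Littlewood--Offord-type anti-concentration usable.

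Finally — the crux — I would boost $\corank(A)\le\varepsilon N$ all the way to $\corank(A)=0$ using the high-degree vertices. The idea is to expose $\core_k(G)$ in a carefully chosen order via the configuration model, so that a suitable family of fresh random connections incident to a well-chosen family $W$ of high-degree vertices is revealed last, while the graph revealed beforehand already has small corank; one then reveals these connections one high-degree vertex at a time. Each such step appends to the current matrix (essentially) a row and column equal to the indicator of a fresh, random, spread-out neighbourhood, and an elementary rank computation for symmetric matrices shows that this decreases the current corank by exactly $1$ precisely when that indicator lies outside the current column space. By spreadness of the relevant kernel vectors together with Erd\H os--Littlewood--Offord anti-concentration, this ``good'' event has probability bounded below by a constant whenever the corank is still positive, while the ``bad'' event that the corank increases is controlled by a quadratic Littlewood--Offord estimate for sparse $\{0,1\}$ test vectors. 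Treating the corank as a process with a definite downward drift while it is positive, a martingale/drift argument over the linearly many vertices of $W$ drives it to $0$ whp, which is the theorem. The two places I expect the real fight to be are: (i) designing the exposure and running the accounting so that the process provably reaches $0$ rather than merely becoming small — reconciling ``the already-revealed graph has tiny corank'' with ``$W$ is still large enough to repair that deficiency''; and (ii) the anti-concentration inputs themselves, especially the quadratic Littlewood--Offord bound for the sparse $0/1$ vectors this model produces, which is where density $O(1/n)$ bites hardest.
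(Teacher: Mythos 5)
Your architecture is the paper's architecture almost exactly: Bordenave--Lelarge--Salez for an approximate rank bound, an unstructuredness statement for (almost-)kernel vectors, extraction of high-degree vertices whose neighbourhoods retain usable randomness, one-by-one re-insertion driven by linear and quadratic Littlewood--Offord inequalities on a Boolean slice, and a drift argument sending the corank to zero. You also correctly locate the two pressure points. But as written there are two concrete gaps, both at steps you describe as routine.

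First, the approximate rank bound is needed for the wrong graph. The exposure scheme forces you to bound the corank of $\core_k(G)-T$ (the core with the to-be-re-inserted set removed, its incident edges still unrevealed), not of $\core_k(G)$. For $\core_k(G)-T$ the local weak limit is a Galton--Watson tree whose degree distribution has atoms below $k$ (the former neighbours of $T$), so the mass of its spectral measure at $0$ is \emph{not} zero; ``one checks this mass is zero'' is true only for the full core, which is not the graph you get to apply it to. What is actually required is the quantitative statement that this positive limiting corank density is at most a small constant multiple of $|T|/n$, so that the random walk starting at $\corank\le|T|/8$ has enough steps to reach zero; in the paper this is the content of \cref{lem:rank,lem:rank-analysis} and rests on a log-concavity analysis of the generating function $M_\mu$. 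Without some version of that comparison, ``tiny corank'' and ``$W$ large enough to repair it'' do not reconcile. (Relatedly, $W$ cannot be the set of all high-degree vertices of the core --- that choice destroys the exchangeability you need --- but must be the high-degree vertices inside a pre-fixed set of $\alpha n$ vertices, followed by a cleanup step; your ``well-chosen family'' gestures at this without supplying the mechanism.)

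Second, your spreadness condition is mis-specified for slice anti-concentration. On the slice of vectors with exactly $d$ ones, a near-constant test vector defeats Littlewood--Offord even if it has full support and no zero entries: for $\mbf v=(1,\dots,1)$ one has $\mbf v^T\mbf x=d$ deterministically. So the property you must establish is that \emph{every level set} of every almost-kernel vector (orthogonal to all but at most two rows --- for the quadratic step the relevant vectors are columns of the adjugate, not kernel vectors) has size at most $(1-\eta)n$, nonzero values included. Your first-moment union bound over small supports only controls the zero level set and cannot rule out near-constant vectors; the paper handles the constant part by a separate argument counting odd-degree vertices (reducing mod $2$, a vector constant off a small set would force almost all odd-degree vertices to have a neighbour in that small set, contradicting expansion). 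Moreover the property must hold simultaneously for every intermediate graph arising during the re-insertion, whose conditional laws are awkward; the paper sidesteps this by proving a deterministic statement about all ``good'' induced subgraphs of the ambient $\GG(n,\lambda/n)$ rather than a distributional statement for each conditioned graph. Your parenthetical ``the same conclusion for the graphs that arise during the exposure'' is precisely where a per-distribution moment argument would become unmanageable.
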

\begin{remark}
Vu actually conjectured that the $k$-core of the \emph{giant component} of a typical $G\sim \GG(n,\lambda/n)$ is nonsingular. This follows from \cref{thm:main}, since a graph has a singular adjacency matrix if and only if at least one of its connected components has a singular adjacency matrix. 
\end{remark}

\begin{remark}
Many random graph properties of interest are \emph{monotone}: if a graph satisfies the property, then adding edges can never destroy the property. In particular, this implies that if $G\sim\GG(n,p)$ satisfies the property whp, and $p'\ge p$, then $G'\sim\GG(n,p')$ satisfies the property whp as well. Nonsingularity is not a monotone property, so formally \cref{thm:main} leaves open the case when $p$ grows asymptotically faster than $1/n$. However, the regime where $np=O(1)$ is the most challenging, and it is not difficult to adapt our proof methods to handle the case where $np\to \infty$ (we discuss this further in \cref{rem:np-infty}). We remark that the analogue of \cref{thm:main} in the regime $np\to\infty$ was recently proved\footnote{The result in \cite{Gla21} is only stated for the 3-core, but the analysis also works for larger $k$; see \cite[Remark 1]{Gla21}.} in independent work by Glasgow~\cite{Gla21} (see also \cite{DGM21}).
\end{remark}

Before discussing the ideas in the proof of \cref{thm:main}, we provide a bit more context about $k$-cores of random graphs. The notion of the $k$-core was introduced by Bollob\'as~\cite{Bol84} in his study of the evolution of sparse random graphs.

It is not an easy task to understand the $k$-core of a random graph, but by now we have a very complete picture, at least concerning statistical aspects. In an influential paper, Pittel, Spencer and Wormald~\cite{PSW96} proved that the property of having a nonempty $k$-core has a sharp threshold: for $k\ge 3$, there is an explicit constant $\lambda_k>1$ such that if $np\le \lambda_k-\varepsilon$ (respectively, if $np\ge \lambda_k+\varepsilon$), then a typical $G\sim \GG(n,p)$ has an empty $k$-core (respectively, has a nonempty $k$-core). Moreover, they found formulas for the typical numbers of edges and vertices in the $k$-core above this critical threshold. Many alternative proofs of these results have since been discovered; see for example \cite{FR03,Coo04,Kim06,Mol05,CW06,JL07,Dar08,CCKS17,CCKS19,CCKL21,Rio08}.

Another important milestone in this area is the counterpart of \cref{thm:main} for \emph{Hamiltonicity}: Krivelevich, Lubetzky and Sudakov~\cite{KLS14} proved that for $k\ge 3$, the $k$-core of a random graph has a Hamiltonian cycle whp. There are obvious parallels between Hamiltonicity and nonsingularity: both properties are of a ``global'' nature but are trivially obstructed by the existence of certain ``local'' low-degree configurations. It is worth remarking that other than considering the $k$-core, a different way to eliminate these kinds of local obstructions is to consider \emph{random regular graphs}. That is, for some $d$, we consider a random graph drawn uniformly from the set of all $n$-vertex graphs in which every degree is exactly $d$. There are close connections between random regular graphs and the $k$-core of the Erd\H os--R\'enyi random graph (which we will discuss later in more detail), but random regular graphs have more symmetry and are therefore in many ways more tractable. It is a classical result of Robinson and Wormald~\cite{RW92} that for $d\ge 3$, random $d$-regular graphs are Hamiltonian whp, and in a recent breakthrough, M\'{e}sz\'{a}ros~\cite{Mes20} and Huang~\cite{Hua18} independently proved that for $d\ge 3$, random $d$-regular graphs are nonsingular whp. Before this, Landon, Sosoe and Yau~\cite{LSY19} proved the same for $d=d_n = n^{\Omega(1)}$, while several other authors had previously considered the simpler case of random regular \emph{directed} graphs: in particular, Cook~\cite{Cook17b} established nonsingularity for random $d$-regular directed graphs with $d\ge C (\log n)^2$ (for sufficiently large $C$), and Litvak, Lytova, Tikhomirov, Tomczak-Jaegermann and Youssef~\cite{LLTTY17} proved the same only assuming that $d\to\infty$.

To our knowledge, the aforementioned recent results due to Huang and M\'{e}sz\'{a}ros (and now \cref{thm:main}) are the only known results establishing nonsingularity for any ``extremely sparse'' model of $n\times n$ random matrices, in which the number of nonzero entries is only $O(n)$. Many of the standard arguments (for example, the ``row-by-row exposure'' ideas of Koml\'os and Costello--Tao--Vu) do not work in this regime, and Huang and M\'{e}sz\'{a}ros developed quite different enumeration-based techniques in their work. Unfortunately, there are serious obstacles towards adapting Huang and M\'{e}sz\'{a}ros' ideas to the $k$-core setting.

Instead, the starting point for our proof of \cref{thm:main} is some spectral convergence machinery of Bordenave, Lelarge and Salez~\cite{BLS11}. These authors showed that if a sequence of graphs ``locally converges'' to an appropriate Galton--Watson tree $T$, then the empirical spectral measures of the graphs converge weakly to the ``local spectral measure'' of $T$. They used this fact to obtain estimates on the rank of convergent sequences of graphs; in particular their main result was an asymptotic estimate for the rank of the adjacency matrix of a typical random graph\footnote{It is worth remarking that related rank estimates for sparse random matrices have been obtained by many authors, in part due to connections to coding theory and constraint satisfaction problems. See for example \cite{CEGHR20,CG18}, and the references within.} $G\sim \GG(n,\lambda/n)$. It turns out that the $k$-core of a typical random graph $G\sim \GG(n,\lambda/n)$ satisfies an appropriate local convergence property, and the machinery in Bordenave, Lelarge and Salez~\cite{BLS11} can be used to show that the adjacency matrix $A$ of $\core_k(G)$ is close to full rank: that is, $\corank A=o(n)$. So, our task is to ``boost'' the corank from $o(n)$ to zero. This is where our new ideas come in: we are able to ``extract'' high degree vertices from the $k$-core, in a way that does not bias too significantly the distribution of the rest of the graph, and we then add back these high-degree vertices one-by-one, typically decreasing the corank at each step. We discuss the ideas in our proof in more detail in \cref{sec:outline}.

\begin{remark}
In a previous version of this paper (which was since published at \emph{Duke Mathematical Journal}), we had remarked that (with minor modifications) our proof also gives a \emph{hitting time} result: if $k\ge 3$, then in the random graph process where we add random edges one at a time, whp the $k$-core is nonsingular at the very moment it becomes nonempty.

Since then, in discussion with Amin Coja-Oghlan, Jane Gao and Mihyun Kang we discovered that such an adaptation is actually not entirely trivial; while we believe that the methods in this paper should be applicable, the hitting-time version of our theorem should be viewed as an open problem.
\end{remark}

\begin{remark}
As written, our proof does not give effective probability bounds. However, with certain modifications it is possible to prove for an explicit $C > 0$ that the conclusion of \cref{thm:main} holds with probability at least $1-(\log \log n)^C/\sqrt{\log n}$ for sufficiently large $n$ (though the cutoff implied by ``sufficiently large'' is an ineffective absolute constant). We discuss this further in \cref{rem:effective}.
\end{remark}

\begin{remark}
Our proof 
does not provide a quantitative least singular value bound. This ultimately arises from the use of the spectral convergence machinery of Bordenave, Lelarge, and Salez~\cite{BLS11} which does not obviously provide bounds on the number of singular values below (say) $1/n^{100}$. It remains a fascinating open problem to prove that whp all singular values of the $3$-core of $\mb{G}(n,\lambda/n)$ are at least $n^{-\varepsilon}$, for any constant $\varepsilon$.
\end{remark}

\begin{remark}
It may also be of interest to prove a directed analogue of \cref{thm:main}, where we define the $k$-core of a directed graph to be the subgraph obtained by iteratively deleting vertices with indegree or outdegree less than $k$. It seems plausible that our methods can be adapted to prove that for $k\ge 2$ the $k$-core of a random directed graph is nonsingular whp, though we have not chosen to pursue this direction in the present paper.
\end{remark}

\subsection*{Notation}
Our graph-theoretic notation is for the most part standard. In a graph $G$, we write $\deg_G(v)$ for the degree of a vertex $v$. Abusing notation slightly, we also write $\deg_S(v)=|N_S(v)|$ for the number of neighbours of a vertex $v$ in a vertex subset $S$. We write $G[S]$ for the subgraph of $G$ induced by the vertex subset $S$, and we write $\GG(n,p)$ for the Erd\H{o}s--R\'enyi random graph on $n$ vertices with edge density $p$. 

Our use of asymptotic notation is standard as well. For functions
$f=f\left(n\right)$ and $g=g\left(n\right)$, we write $f=O\left(g\right)$
to mean that there is a constant $C$ such that $\left|f\right|\le C\left|g\right|$,
$f=\Omega\left(g\right)$ to mean that there is a constant $c>0$
such that $f(n)\ge c\left|g(n)\right|$ for sufficiently large $n$, $f=\Theta\left(g\right)$ to mean
that that $f=O\left(g\right)$ and $f=\Omega\left(g\right)$, and $f=o\left(g\right)$ to mean that $f/g\to0$ as $n\to\infty$. We say that an event occurs with high probability (``whp'') if it holds with probability $1 - o(1)$.

\subsection*{Acknowledgements}
We thank the anonymous referees for their careful reading and for suggestions which improved the quality of the manuscript.

\section{Outline of the proof, and comparison to previous work}\label{sec:outline}

In this section we outline the ideas in our proof and compare them to previous work.

\subsection{Degree-constrained random graphs}
An important observation regarding the $k$-core of a random
graph $G\sim \GG(n,p)$ is that for any set of vertices $U$ and any degree sequence $\mbf d=(d_v)_{v\in U}$, all the graphs on the vertex set $U$ with degree sequence $(d_v)_{v\in U}$ have the same probability of occurring as $\core_k(G)$. Writing $\GG(\mbf d)$ for the uniform distribution on the set of all such graphs, it therefore would suffice to consider a set $\mathcal D$ of degree sequences such that $\core_k(G)$ has degree sequence in $\mathcal D$ whp, and to show that for any $\mbf d\in \mc D$, the degree-constrained random graph $\GG(\mbf d)$ is nonsingular whp.

So, a reasonable plan of attack would be to try to generalise the recent work of Huang~\cite{Hua18} and M\'{e}sz\'{a}ros~\cite{Mes20} on random regular graphs, to the more general setting of random graphs with a given degree sequence. Roughly speaking, the approach in these papers is to view a random matrix of interest as a random matrix over a finite field $\mb F_q$, and to study the expected size of the kernel of this matrix using careful enumerative arguments. In the case of general degree sequences, while it is possible to use similar techniques to write a (very complicated) combinatorial expression for the expected kernel size, it appears intractable to analyse the associated variational problem without the symmetry present in the $d$-regular case (which is used crucially in the work of Huang~\cite{Hua18} and  M\'{e}sz\'{a}ros~\cite{Mes20}). It is worth remarking that for certain reasonable-looking degree sequences $\mbf d$ it is simply not true that a typical $G\sim\GG(\mbf d)$ is nonsingular whp: a simple example (taken from \cite{BF11}) is the case when we have $n$ vertices, $9n/10$ of which have degree $3$ and $n/10$ of which have degree $100$. In this case, typically a majority of degree-$3$ vertices are only adjacent to degree-$100$ vertices, which implies that the adjacency matrix of $G$ has corank $\Omega(n)$ whp. We further remark that there is not necessarily a direct two-way correspondence between expected kernel size and the singularity probability: it is unclear whether carefully analysing the expected kernel size is even a viable method to establish nonsingularity for general degree sequences.

However, there are certain things we can say about quite general degree-constrained random graphs: in particular, with very mild assumptions on $\mbf d$, a typical random graph $G\sim \GG(\mbf d)$ has the property that its local statistics can be very closely approximated by an appropriate Galton--Watson tree $T$ (this is the well-known notion of local graph convergence introduced independently by Benjamini and Schramm~\cite{BS01} and by Aldous and Steele~\cite{AS04}). Using some machinery of Bordenave, Lelarge and Salez~\cite{BLS11}, one can estimate the rank of the adjacency matrix of such a graph in terms of an optimisation problem involving a probability generating function associated with $\mbf d$. The typical degree sequence of the $k$-core of a random graph was computed by Cain and Wormald~\cite{CW06} (we state their result in \cref{lem:core-degree-sequence}), so one can solve an explicit optimisation problem to see that for a typical $G\sim \GG(n,\lambda/n)$, with $A$ as the adjacency matrix of $\core_k(G)$, we have $\corank A=o(n)$ whp.

\subsection{Boosting the corank}\label{subsec:boosting}

At first glance, the above considerations may not seem particularly useful, as there is usually a big jump in difficulty between proving that a random matrix is close to full rank, and proving that it is actually of full rank. However, the $k$-core of a random graph has some properties that are very useful to us. In particular, even though the expected degrees are $(n-1)p=O(1)$, there are typically at least a few vertices in $\core_k(G)$ which have quite high degree: it is easy to show that for any function $f(n)=o(n)$, there is some $\Delta$ growing to infinity such that whp at least $f(n)$ vertices in $\core_k(G)$ have degree at least $\Delta$.

The significance of high-degree vertices is that we can use them to ``boost'' the corank, adapting the ideas of Costello, Tao and Vu~\cite{CTV06} first used to study singularity of random symmetric matrices. Indeed, we show (\cref{lem:decoupling}) that if we start with a graph satisfying a certain ``unstructured kernel property'', and we add a new vertex with a large random neighbourhood, then the corank typically decreases by 1 (unless it is already zero, in which case it typically stays at zero). This fact is proved with \emph{Littlewood--Offord}-type inequalities. In particular, we prove and apply a quadratic Littlewood--Offord inequality for random vectors uniform on a ``slice'' of the Boolean hypercube (\cref{prop:quadratic-LO-general}), which may be of independent interest.

The discussion so far suggests the following strategy. First, identify a small set $T$ of high-degree vertices in $\core_k(G)$, and let $\core_k(G)-T$ be the graph obtained by deleting the vertices in $T$ from $\core_k(G)$. Then, use the machinery of Bordenave, Lelarge and Salez to show that typically $\core_k(G)-T$ has corank at most $|T|/2$. Finally, add back the vertices in $T$ one-by-one; as long as the relevant unstructuredness property continues to hold, the corank should follow a random walk heavily biased towards zero, implying that our final graph $\core_k(G)$ has nonsingular adjacency matrix whp.

On a very high level, this is indeed more or less our strategy to prove \cref{thm:main}. For the fact that random walks biased towards zero typically end up at zero, we can adapt a simple argument of Costello, Tao and Vu (\cref{lem:random-walk}). However, the other parts of the argument are not so straightforward, and a number of additional ideas are required.

\subsection{Unstructuredness of the almost-kernel vectors}
First, we need to know that $k$-cores of random graphs (and graphs obtained from $k$-cores by deleting a few vertices) typically satisfy an appropriate ``unstructured kernel property''. Specifically, for an adjacency matrix $A$, the property we need is that vectors $\mbf v$ which are orthogonal to almost all rows of $A$ (``almost-kernel vectors'') are far from being constant vectors, in the sense that for all $x\in \RR$ we have $|\{i:v_i\ne x\}|=\Omega(n)$. (Actually, due to certain issues that arise for small $k$, the precise property we need is a bit more technical than this; see \cref{def:UKP}).

It is not a trivial matter to prove that properties of this type typically hold for degree-constrained random graphs: in the setting of random regular graphs, such a property follows from results of Backhausz and Szegedy~\cite{BS19}, and one can also deduce such a property from the estimates in Huang's paper~\cite{Hua18} (in the digraph setting, similar properties were derived in the degree-$\omega(1)$ setting by Litvak, Lytova, Tikhomirov, Tomczak-Jaegermann, and Youssef \cite{LLTTY19b}). Fortunately in our case we do not actually need to consider degree-constrained random graphs at all, and it suffices to consider properties of $G\sim\GG(n,p)$ directly. Indeed, as a consequence of some fairly simple expansion estimates, we are able to show (\cref{lem:UKP}) that whp $G$ has the property that \emph{any} induced subgraph $G[U]$ vaguely resembling a $k$-core also satisfies an appropriate unstructuredness property. For example, this holds as long as $G[U]$ has minimum degree at least $3$ and $\Omega(n)$ vertices of odd degree. Due to technical issues that arise when $k=3$, we actually also need to permit a very small number of degree-$2$ vertices.

\subsection{Efficiently extracting high-degree vertices}

The remaining issue with the strategy outlined earlier is that we need to reveal the identities of high-degree vertices in $\core_k(G)$ without revealing too much information about the rest of the graph. For example, suppose we were to na\"ively reveal the degrees of all vertices in $\core_k(G)$, define $T$ to contain the set of all vertices in $\core_k(G)$ whose degree is at least some appropriately chosen value $\Delta$, and then reveal the subgraph $\core_k(G)-T$ (in preparation for adding back the vertices of $T$, and studying the effect on the corank). Unfortunately, conditioned on information revealed so far, the neighbourhoods of vertices in $T$ are very far from being uniformly random: for example, we now know which vertices have different degrees in $\core_k(G)-T$ and in $\core_k(G)$, and these vertices must be the ones with neighbours in $T$.

We must therefore be extremely careful with the way we expose information. We fix a set $S$ of $\alpha n$ vertices, for some small $\alpha>0$, and let $T$ be the set of vertices in $S$ which have degree at least $\Delta$ (with respect to $\core_k(G)$). Now, reveal the graph $\core_k(G)-T$ (which is uniform over graphs with its degree sequence, so we can estimate its corank using the machinery of Bordenave, Lelarge and Salez; we do this in \cref{lem:rank}). Due to the deletion of the vertices in $T$, this graph $\core_k(G)-T$ will typically have a few vertices with degree less than $k$ (and we therefore know these vertices must have neighbours in $T$). However, other than this issue, by symmetry considerations the neighbourhoods of vertices in $T$ still have quite a lot of usable randomness: for example, if we fix any two sets of vertices in $\core_k(G)-S$, neither of which contain a vertex which has degree less than $k$ in $\core_k(G)-T$, then these two sets are equally likely to occur as the neighbourhood of a given vertex in $T$.

There are still certain complications remaining: for example, even though all the vertices in $T$ were chosen to have high degree in $\core_k(G)$, we cannot assume they have high degree in $\core_k(G)-S$ (if we had chosen the vertices in $T$ in that way, it would not have been true that $\core_k(G)-T$ locally converges to a Galton--Watson tree). Also, $\core_k(G)-T$ will typically have a few vertices of degree less than $2$, which means its adjacency matrix typically does not actually satisfy our unstructured kernel property. So, before starting our corank-boosting random walk, we need to make certain small adjustments to the set $T$ (being very careful to do this in a such a way that we retain useful randomness for the neighbourhoods of its vertices). The details of these adjustments are described in \cref{lem:extraction}.

\section{Anti-concentration on the Boolean slice}\label{sec:anticoncentration}
The \emph{Erd\H os--Littlewood--Offord theorem} (see~\cite[Corollary~7.8]{TV10}) is an important tool in random matrix theory. Given a sequence of nonzero real numbers $v_1,\dots,v_n$, and independent zero-one random variables $x_1,\dots,x_n\in \{0,1\}$, it gives an \emph{anti-concentration} bound for the random sum $X=x_1v_1+\dots+x_nv_n$. We will need a variant for the case where $(x_1,\dots,x_n)\in \{0,1\}^n$ is constrained to have a certain sum (such vectors are said to lie on a \emph{slice} of the Boolean hypercube).
\begin{lemma}\label{lem:linear-LO-slice}
Let $1\le d\le n/2$, $\eta>0$ and let $\mbf{v}=(v_{1},\dots,v_{n})\in \RR^n$
be a vector with no entry repeated more than $(1-\eta)n$ times. Let $\mbf{x}=(x_{1},\dots,x_{n})\in \{0,1\}^n$
be a random vector, uniformly selected from the zero-one
vectors with exactly $d$ ones, and consider any $y\in \RR$. Then
\[
\Pr(\mbf{v}^{T}\mbf{x}=y)=O((\eta d)^{-1/2}).
\]
\end{lemma}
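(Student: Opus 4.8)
The plan is to reduce the slice-constrained Littlewood–Offord estimate to the classical Erdős–Littlewood–Offord theorem by revealing the randomness of $\mbf x$ in two stages. The key observation is that a uniformly random zero-one vector with exactly $d$ ones can be generated as follows: first pair up the coordinates $\{1,\dots,n\}$ into roughly $n/2$ disjoint pairs $\{i,j\}$ (say by a uniformly random perfect matching on the index set, discarding at most one leftover index), then for each pair independently decide, conditioned on the total number of ones being $d$, how many ones to place in that pair and where. Conditioning on the event that a pair $\{i,j\}$ receives exactly one of its two ones, the location of that one is an unbiased coin flip between $i$ and $j$, \emph{independently} across all such pairs. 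So if we condition on the multiset of ``pair-types'' (how many ones each pair gets) and on which pairs are the ``one-one'' pairs, the residual randomness is exactly a collection of independent symmetric $\pm$ choices, and $\mbf v^T\mbf x$ becomes $c + \sum_{\{i,j\}\text{ one-one}} (\epsilon_{ij} v_i + (1-\epsilon_{ij}) v_j)$ where $c$ is a constant determined by the conditioning and $\epsilon_{ij}\in\{0,1\}$ is the independent coin. Each such term is a two-point random variable taking values $v_i$ and $v_j$; it is genuinely ``spread'' precisely when $v_i\ne v_j$. Applying the Erdős–Littlewood–Offord theorem to the independent-coin sum gives a bound of $O(m^{-1/2})$, where $m$ is the number of one-one pairs $\{i,j\}$ with $v_i\ne v_j$.

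The crux is therefore a counting step: I must show that with the appropriate (high) conditional probability, the number $m$ of one-one pairs with distinct $\mbf v$-entries is $\Omega(\eta d)$, after which averaging over all the conditioning yields the claimed $O((\eta d)^{-1/2})$ bound (handling the low-probability bad conditionings by the trivial bound $1$, or by choosing the matching and type-decomposition so that $m=\Omega(\eta d)$ holds deterministically). To see that a random matching produces many ``good'' pairs, note that the no-entry-repeated-more-than-$(1-\eta)n$-times hypothesis means that for any value $x$, the set $\{i : v_i = x\}$ has size at most $(1-\eta)n$, so a uniformly random matching has in expectation $\Omega(\eta n)$ edges joining two coordinates with different $\mbf v$-values (this is a standard second-moment or Turán-type computation on the ``equality graph''). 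Then, conditioning on the number of ones being $d$ with $d\le n/2$ (the complementary case $d > n/2$ follows by the symmetry $\mbf x \mapsto \mbf 1 - \mbf x$, which only shifts $x$), a Chernoff/concentration argument shows that $\Omega(d)$ of these good pairs are one-one pairs with probability $1-o(1)$ over the type-decomposition; when $d$ is bounded, one instead argues directly that the conditional distribution still assigns constant probability to placing ones in a good pair. Combining, $m = \Omega(\eta d)$ with probability bounded below by an absolute constant, which suffices.

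The main obstacle I anticipate is the bookkeeping around the conditioning on the total sum: generating the slice distribution via a random matching plus per-pair choices is clean, but one must verify carefully that conditioning on $\sum x_i = d$ does not destroy the independence of the $\epsilon_{ij}$'s \emph{given} the pair-types — it does not, since the types already determine the sum, so the $\epsilon_{ij}$'s for one-one pairs remain i.i.d.\ fair coins — and one must ensure the event $m=\Omega(\eta d)$ is not itself in tension with the conditioning. A cleaner route that sidesteps some of this is to fix the matching first so that $\Omega(\eta n)$ edges are good (possible deterministically by a greedy/averaging argument), then observe that the induced distribution on $\{$which good edges are one-one$\}$ stochastically dominates (or is comparable to) a product distribution with each good edge one-one with probability $\Omega(d/n)$, giving $m = \Omega(\eta d)$ in expectation and concentration, and finally invoke Erdős–Littlewood–Offord conditionally. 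I expect the final constant in the $O(\cdot)$ to come out as an absolute constant times $\eta^{-1/2}$, matching the statement; no attempt will be made to optimize it.
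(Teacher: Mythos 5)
The paper does not actually prove this lemma from scratch: it is quoted as a direct consequence of \cite[Lemma~4.2]{FKS21}. Your route --- generate the uniform point of the slice by pairing up coordinates and then flipping an independent fair coin inside each ``one-one'' pair, so that $\mbf{v}^T\mbf{x}$ becomes a constant plus an independent Rademacher sum with nonzero coefficients exactly at the good one-one pairs, then invoke Erd\H{o}s--Littlewood--Offord --- is sound, and it is in fact precisely the mechanism the paper itself uses for the quadratic analogue in \cref{prop:quadratic-LO-general} (random injection plus Rademacher signs). Your ``cleaner route'' of fixing a matching with $\Omega(\eta n)$ good edges deterministically and then controlling how many of them are one-one is the right way to organise the counting. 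So the architecture is correct and arguably more self-contained than the paper's citation.

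Two specific claims need repair, however. First, your dispatch of the case $d>n/2$ via the symmetry $\mbf{x}\mapsto\mbf{1}-\mbf{x}$ produces the bound $O((\eta(n-d))^{-1/2})$, not $O((\eta d)^{-1/2})$, and these differ in the wrong direction when $d>n/2$. In fact the stated bound is false for $d$ near $n$: with $n/2$ entries of $\mbf{v}$ equal to $0$ and $n/2$ equal to $1$ (so $\eta=1/2$) and $d=n-1$, one has $\Pr(\mbf{v}^T\mbf{x}=n/2)=1/2$, far exceeding $O(n^{-1/2})$. The lemma should be read with the implicit hypothesis $d\le n/2$ (which is all that is used in \cref{lem:decoupling}, where $d\le\log n$), or with $\min(d,n-d)$ in place of $d$; you should state this rather than claim a symmetry reduction. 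Second, your concluding sentence that ``$m=\Omega(\eta d)$ with probability bounded below by an absolute constant\dots suffices'' is not enough: if the complementary event had constant probability you would only get a constant bound overall. What you need is that $\Pr(m<c\eta d)=O((\eta d)^{-1/2})$, after first discarding the trivial regime $\eta d=O(1)$ where the claimed bound exceeds $1$. This does follow from your setup --- with a deterministic matching having $\Omega(\eta n)$ good edges, $\E m=\Omega(\eta d)$ for $d\le n/2$, and a second-moment (or negative-association, or switching-type concentration as in \cref{lem:injection-concentration}) argument gives $\Pr(m\le\E m/2)=O(1/(\eta d))$ --- but that step must be carried out explicitly rather than asserted at the level of ``constant probability''.
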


\cref{lem:linear-LO-slice} is a direct consequence of \cite[Lemma~4.2]{FKS21}. A similar inequality for the case where $d=n/2$ was proved in \cite{LLTTY17}.

We will also need an anti-concentration inequality for \emph{quadratic} polynomials of random vectors $(x_1,\dots,x_n)\in \{0,1\}^n$ on the Boolean slice. Such an inequality does not seem to be available in the literature (though a special case appears in \cite{KST19}) so we deduce it from a quadratic inequality for the case where each $x_i$ is independent. Such an inequality was first proved by Costello, Tao and Vu~\cite{CTV06}, and the Costello--Tao--Vu inequality would already suffice for our proof of \cref{thm:main}, but the following stronger inequality is now also available.

\begin{theorem}\label{thm:Kane}
Let $\mbf{x}=(x_{1},\dots,x_{n})\in \{0,1\}^n$
be a uniformly random zero-one vector of length $n$. Let $M\in \RR^{n\times n}$ be an $n\times n$ symmetric matrix with $\Omega(n^2)$ nonzero entries, let $\mbf v\in \RR^n$ be any vector, and let $y\in \RR$ be any real number. Then 
\[
\Pr(\mbf{x}^{T}M\mbf{x}+\mbf{v}^{T}\mbf{x}=y)\le (\log n)^{O(1)}/\sqrt n.
\]
\end{theorem}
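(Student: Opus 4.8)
The plan is to reduce the statement on the Boolean slice — wait, re-reading: \cref{thm:Kane} is itself the \emph{independent} (full hypercube) version, which the paper says it will "deduce" a slice version from. So the task is to prove the full-hypercube quadratic Littlewood--Offord bound $(\log n)^{O(1)}/\sqrt n$ under the hypothesis that $M$ is symmetric with $\Omega(n^2)$ nonzero entries. This is exactly the type of statement proved by Costello--Tao--Vu~\cite{CTV06} with a weaker (polynomial-in-$1/\log n$, or $n^{-c}$-type) bound, and subsequently sharpened by Kane and others, so I would follow the \emph{decoupling} strategy and then invoke the sharp \emph{linear} Erd\H os--Littlewood--Offord bound as the engine.

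\textbf{Step 1: Decoupling.} Split the index set $[n]$ into two halves $I$ and $J$ of size $n/2$ each, chosen so that the bipartite-type submatrix $M_{I\times J}$ still has $\Omega(n^2)$ nonzero entries; since $M$ has $\Omega(n^2)$ nonzero entries a random such split works with positive probability, and one can fix it. Write $\mbf x = (\mbf y, \mbf z)$ with $\mbf y$ indexed by $I$ and $\mbf z$ by $J$. Conditioning on $\mbf z$, the quantity $Q(\mbf x) = \mbf x^T M \mbf x + \mbf v^T \mbf x$ is an affine-linear function of $\mbf y$ plus a term in $\mbf z$ alone. The standard decoupling inequality (as in \cite[Lemma~6.3]{CTV06}) gives, for $\mbf y, \mbf y'$ i.i.d.\ copies and $\mbf z, \mbf z'$ i.i.d.\ copies,
\[
\Pr(Q(\mbf x) = x)^4 \le \Pr\big((\mbf y - \mbf y')^T M_{I\times J} (\mbf z - \mbf z') = 0\big) + o(1)?
\]
— more precisely it bounds a power of the concentration probability by the probability that a doubly-randomized bilinear form vanishes; I would state it cleanly with the fourth-moment/van der Corput form so that $\mbf w := \mbf y - \mbf y' \in \{-1,0,1\}^I$ and $\mbf u := \mbf z - \mbf z' \in \{-1,0,1\}^J$ appear, and we must bound $\Pr\big(\mbf w^T M_{I\times J}\, \mbf u = 0\big)$.

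\textbf{Step 2: Reduce to a linear Littlewood--Offord bound on a random row-combination.} Condition on $\mbf w$. The vector $\mbf b := \mbf w^T M_{I \times J} \in \RR^J$ has entries $b_j = \sum_{i\in I} w_i M_{ij}$, and we need $\Pr_{\mbf u}(\mbf b^T \mbf u = 0)$ to be small \emph{as long as $\mbf b$ has many nonzero entries}. The entries $u_j$ are not independent Bernoullis but differences $z_j - z_j'$ of independent Bernoullis, so each $u_j$ is $-1,0,+1$ with probabilities $1/4,1/2,1/4$; restricting to the event that $u_j \in \{-1,+1\}$ (probability $1/2$ each, independent across $j$) we are in the classical Erd\H os--Littlewood--Offord setting and get $\Pr(\mbf b^T \mbf u = 0 \mid \#\{j: b_j \ne 0\} = m) = O(m^{-1/2})$. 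So it remains to show that with probability $1 - O((\log n)^{-O(1)})$ over $\mbf w$, the vector $\mbf b$ has $m = \Omega(n/(\log n)^{O(1)})$ nonzero entries — equivalently, that $\mbf w$ is "generic" with respect to $M_{I\times J}$.

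\textbf{Step 3: The genericity of $\mbf w$ — the main obstacle.} This is where the $(\log n)^{O(1)}$ loss is incurred and where the real work lies. We want: for all but a $(\log n)^{-O(1)}$ fraction of choices of $\mbf w$, the number of columns $j$ with $\sum_i w_i M_{ij} \ne 0$ is $\gtrsim n /(\log n)^{O(1)}$. The bad event is that $\mbf w$ is "nearly orthogonal in the $\{0\}$-pattern sense" to most columns of $M_{I\times J}$; one quantifies this by a second application of Littlewood--Offord \emph{to the columns}: for a fixed column $M_{\cdot j}$ with $\ell_j$ nonzero entries, $\Pr_{\mbf w}(\sum_i w_i M_{ij} = 0) = O(\ell_j^{-1/2})$, again after conditioning $w_i$ onto $\{-1,+1\}$. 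Since $\sum_j \ell_j = \Omega(n^2)$, most columns have $\ell_j = \Omega(n/(\log n)^{O(1)})$ — here is the delicate point: a priori the nonzero entries could be concentrated in a few dense columns, so one first discards the at most $O(n/(\log n)^{O(1)})$ "light" columns (those with $\ell_j$ below the threshold), which collectively hold $o(\Omega(n^2))$ of the mass, leaving $\Omega(n)$ "heavy" columns each contributing $\Pr_{\mbf w}(b_j = 0) = O((\log n)^{-O(1)})$. Then $\E_{\mbf w}[\#\{j\text{ heavy}: b_j = 0\}] = O(n (\log n)^{-O(1)})$, so by Markov, with probability $1 - O((\log n)^{-O(1)})$ fewer than half the heavy columns are killed, i.e.\ $m = \Omega(n)$ — actually this already gives $m = \Omega(n)$, improving the threshold; the $(\log n)^{O(1)}$ in the final bound then comes purely from chaining the conditioning events ($u_j, w_i \in \{\pm 1\}$ costs constant factors per coordinate \emph{in aggregate}, so genuinely this route gives $n^{-1/2+o(1)}$ and matching $(\log n)^{O(1)}$ requires care about the union over the "bad $\mbf w$" set). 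I would therefore present the argument so that the $(\log n)^{O(1)}$ absorbs (a) the fraction-of-bad-$\mbf w$ factor and (b) a constant number of halvings in the conditioning, and not attempt anything sharper since Theorem~\ref{thm:Kane}'s stated bound already permits this slack.

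\textbf{Step 4: Assemble.} Combining: $\Pr(Q(\mbf x)=x)^4 \le \Pr(\mbf b^T \mbf u = 0) \le \Pr(\mbf w \text{ bad}) + \max_{\mbf w\text{ good}} \Pr_{\mbf u}(\mbf b^T \mbf u = 0) = O((\log n)^{-O(1)}) + O(n^{-1/2}\cdot(\log n)^{O(1)}) = O((\log n)^{O(1)}/\sqrt n)$ after possibly enlarging the exponent; taking fourth roots gives $\Pr(Q(\mbf x)=x) \le (\log n)^{O(1)}/\sqrt n$ as desired. The one genuinely load-bearing estimate is Step~3 — controlling the measure of non-generic sign patterns $\mbf w$ — and the cleanest way to handle it is the two-level Littlewood--Offord / discard-light-columns argument sketched above, mirroring \cite[\S6]{CTV06} but quoting the sharp linear bound (our \cref{lem:linear-LO-slice} in its independent-coordinate incarnation, or \cite[Corollary~7.8]{TV10}) to get the $1/\sqrt{m}$ rather than an $m^{-c}$.
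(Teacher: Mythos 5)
There is a genuine gap, and it is fatal to the stated bound: the decoupling step loses a fourth power that your Step 4 silently drops. The Costello--Tao--Vu decoupling inequality gives $\Pr(Q(\mbf x)=x)^4 \le \Pr\big((\mbf y-\mbf y')^T M_{I\times J}(\mbf z-\mbf z')=0\big)$, with the fourth power on the \emph{left}. Even if Steps 2 and 3 are carried out perfectly, the right-hand side is a bilinear anti-concentration probability that is $\Theta(n^{-1/2})$ in general (e.g.\ for $M_{I\times J}$ the identity, $\mbf w^T\mbf u=\sum_j w_ju_j$ is a lazy random walk with $n$ steps, which returns to $0$ with probability $\Theta(n^{-1/2})$), so the best conclusion available from this route is $\Pr(Q(\mbf x)=x)=O(n^{-1/8})$. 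Your final line, ``taking fourth roots gives $(\log n)^{O(1)}/\sqrt n$,'' is an arithmetic slip: the fourth root of $(\log n)^{O(1)}n^{-1/2}$ is $(\log n)^{O(1)}n^{-1/8}$. This is exactly why the original Costello--Tao--Vu argument yields only an $n^{-1/8}$-type bound, and why reaching the rate $n^{-1/2}$ required the substantially different machinery of Meka--Nguyen--Vu and Kane (inverse Littlewood--Offord structure and average sensitivity of polynomial threshold functions). No single application of decoupling combined with the sharp linear Erd\H os--Littlewood--Offord bound can close this gap.

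Two further remarks. First, the paper does not prove \cref{thm:Kane} at all: it is quoted as a black box, deduced from Kane's average-sensitivity theorem via the journal version of Meka--Nguyen--Vu, so there is no in-paper argument to compare against. Second, once the minor conditioning issues in Steps 2--3 are repaired (you cannot literally ``restrict to the event $u_j\in\{-1,+1\}$'' for all $j$ simultaneously, since that event has probability $2^{-n}$; instead condition on the support of $\mbf u$, note it is large with high probability, and use that the nonzero coordinates are i.i.d.\ Rademacher), your argument does establish a bound of the form $O(n^{-1/8})$, i.e.\ essentially the original Costello--Tao--Vu inequality. As the paper remarks just before the theorem statement, that weaker inequality would in fact suffice for the proof of \cref{thm:main}; but it does not prove \cref{thm:Kane} as stated.
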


\cref{thm:Kane} is a slight improvement over a result of Meka, Nguyen and Vu~\cite{MNV16}, and may be deduced from a result of Kane~\cite{Kan14} on average sensitivity of polynomial threshold functions (see the journal version of \cite{MNV16} for this deduction).

To prove a Boolean slice version of \cref{thm:Kane} we will also need the following concentration inequality, which is a generalisation
of \cite[Lemma~2.1]{KST19}.

\begin{lemma}\label{lem:injection-concentration}
Let $m\in \mathbb{N}$, let $S$ be any finite set of size $|S| \ge m$, let $\mathcal F$ be the set of functions $\{ 1,\dots,m\} \to S$ and let $\mathcal{I}\subseteq \mathcal F$ be the set of injections
$\{ 1,\dots,m\} \to S$. Consider a function $f:\mathcal F\to\RR$
satisfying the property that  $|f(\pi)-f(\pi')|\le \sum_{i=1}^mc_{i}\mbm{1}_{\pi(i)\neq \pi'(i)}$.
Let $\pi\in\mathcal{I}$ be a uniformly random injection. Then for $t\ge 0$,
\[
\Pr\left[|f(\pi)-\E f(\pi)|\ge t\right]\le 2\exp\left(-\frac{t^{2}}{8\sum_{i=1}^{m}c_{i}^{2}}\right).
\]
\end{lemma}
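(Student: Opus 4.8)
The plan is to generate $\pi$ by revealing its values one position at a time, track the associated Doob martingale, and apply the Azuma--Hoeffding inequality; the only delicate point is bounding the martingale increments, and for this the \emph{order} in which we reveal the positions matters. I would relabel $\{1,\dots,m\}$ so that $c_1\ge c_2\ge\dots\ge c_m$, write $N=|S|\ge m$, and set $Z_s=\E[f(\pi)\mid \pi(1),\dots,\pi(s)]$, so that $Z_0=\E f(\pi)$ and $Z_m=f(\pi)$. Conditioned on $\pi(1),\dots,\pi(s-1)$, the remaining values $(\pi(s),\dots,\pi(m))$ form a uniform random injection from $\{s,\dots,m\}$ into the remaining target set $R$, which has size $N-s+1$.

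To bound $|Z_s-Z_{s-1}|$, note that (conditioning throughout on $\pi(1),\dots,\pi(s-1)$) it suffices to bound $|\E[f(\pi)\mid \pi(s)=a]-\E f(\pi)|$ for each value $a\in R$. Here I would construct an explicit coupling of the law of $\pi$ with the law of $\pi$ conditioned on $\pi(s)=a$: starting from a sample of the former, if $\pi(s)=a$ do nothing; if $a$ occurs as $\pi(j)$ for some $j>s$, swap the values at positions $s$ and $j$; and if $a$ does not occur in the image, simply reset $\pi(s)$ to $a$. A short fiber count (every injection with $\pi(s)=a$ has exactly $N-s+1$ preimages under this map) shows this is a genuine coupling of the two conditional laws, and under it the two injections differ only at position $s$, and possibly at one further position $j>s$, which—conditioned on its existence—is uniform over $\{s+1,\dots,m\}$. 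Tracking the three cases and the probability of each yields
\[
\bigl|\E[f(\pi)\mid \pi(s)=a]-\E f(\pi)\bigr|\ \le\ \frac{(N-s)\,c_s+\sum_{j>s}c_j}{N-s+1},
\]
and since the $c_i$ are nonincreasing (so $\sum_{j>s}c_j\le(m-s)c_s$) and $N\ge m$, the right-hand side is at most $2c_s$.

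Hence $|Z_s-Z_{s-1}|\le 2c_s$ for every $s$, so $\sum_{s=1}^m|Z_s-Z_{s-1}|^2\le 4\sum_{i=1}^m c_i^2$, and Azuma--Hoeffding gives $\Pr(|f(\pi)-\E f(\pi)|\ge t)\le 2\exp\bigl(-t^2/(8\sum_i c_i^2)\bigr)$, which is exactly the claimed bound.

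The main obstacle—and the only ingredient beyond the textbook bounded-differences martingale—is the coupling together with the observation that one must reveal the positions in \emph{decreasing} order of $c_i$: with this order the single transposed coordinate is automatically absorbed into a factor of $2$, whereas a naive revelation order leaks a logarithmic factor (repairable by Hardy's inequality, but only at the cost of a worse constant). Verifying that the swap map is measure-preserving on fibers and carefully enumerating which coordinates change in each case is routine but does require care; the hypothesis $|S|\ge m$ enters precisely in the final inequality $\bigl((N-s)+(m-s)\bigr)/(N-s+1)\le 2$, and everything else is bookkeeping.
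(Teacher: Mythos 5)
Your proposal is correct and follows essentially the same route as the paper's proof: both order the positions so that $c_1\ge\dots\ge c_m$, bound the increments of the Doob martingale $Z_s=\E[f(\pi)\mid\pi(1),\dots,\pi(s)]$ by $2c_s$ via a transposition coupling (the paper couples the two conditional laws $\mathcal{L}(\dots,x)$ and $\mathcal{L}(\dots,y)$ directly, while you couple each conditional law to the marginal, which is an equivalent bookkeeping choice), and finish with Azuma--Hoeffding. No gaps.
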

\begin{proof}
We may assume without loss of generality that $c_{1}\ge\dots\ge c_{m}$. Consider the Doob martingale defined by $Z_{i}=\E[f(\pi)|\pi(1),\dots,\pi(i)]$, so in particular $Z_{0}=\E f(\pi)$ and $Z_{m}=f(\pi)$. Let
$\mathcal{L}(x_{1},\dots,x_{i})$ be the conditional distribution
of $\pi$ given $\pi(1)=x_{1},\dots,\pi(i)=x_{i}$.
We want to show that 
\[
|\E[f(\mathcal{L}(x_{1},\dots,x_{i-1},x))]-\E[f(\mathcal{L}(x_{1},\dots,x_{i-1},y))]|\le2c_{i}
\]
for any choice of distinct $x_{1},\dots,x_{i-1},x,y\in S$; this will imply that
$|Z_{i}-Z_{i-1}|$ is uniformly bounded by $2c_{i}$, so
the desired result will follow from the Azuma--Hoeffding inequality
(see for example \cite[Theorem~2.25]{JLR00}).

If $\pi$ is distributed as $\mathcal{L}(x_{1},\dots,x_{i-1},x)$,
we can change $\pi(i)$ to $y$, and if some $\pi(j)$
was already equal to $y$, change $\pi(j)$ to $x$; we
thereby obtain the distribution $\mathcal{L}(x_{1},\dots,x_{i-1},y)$.
This provides a coupling between $\mathcal{L}(x_{1},\dots,x_{i-1},x)$
and $\mathcal{L}(x_{1},\dots,x_{i-1},y)$ that differs
in only two positions $i$ and $j>i$, and since $c_{j}\le c_{i}$
this implies the required bound.
\end{proof}

Now, our Boolean slice version of \cref{thm:Kane} is as follows.
\begin{proposition}\label{prop:quadratic-LO-general}
Let $M=(m_{ij})_{i,j}$ be an $n\times n$ symmetric matrix for
which there are $\Omega(n^{4})$ different 4-tuples $(i,i',j,j')$
with $m_{ij}-m_{i'j}-m_{ij'}+m_{i'j'}\ne0$. Let $\mbf{x}=(x_{1},\dots,x_{n})$
be a random zero-one vector, uniformly selected from the zero-one
vectors with exactly $d\le n/2$ ones. Then for any vector $\mbf{v}\in\RR^{n}$
and any $x\in\RR$ we have 
\[
\Pr(\mbf{x}^{T}M\mbf{x}+\mbf{v}^{T}\mbf{x}=x)\le (\log d)^{O(1)}/\sqrt d.
\]
\end{proposition}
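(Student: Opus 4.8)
The plan is to deduce \cref{prop:quadratic-LO-general} from the i.i.d.\ quadratic inequality \cref{thm:Kane} by a \emph{re-randomisation} trick. The naive reduction---realising the uniform measure on the $d$-slice as $\operatorname{Bernoulli}(d/n)^{\otimes n}$ conditioned on the coordinate-sum being $d$---costs a multiplicative factor $\Theta(\sqrt d)$, the reciprocal of the probability of that conditioning event, which is fatal; instead we re-sample the slice vector in a way that explicitly carries a $\operatorname{Bernoulli}(1/2)$ perturbation, letting us apply \cref{thm:Kane} with no such loss. We assume throughout that $M$ is symmetric (the only case we need), and that $d$ exceeds a suitable absolute constant, the desired bound being trivial otherwise.

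Let $\mathbf{x}$ be uniform on the $d$-slice of $\{0,1\}^n$; since $d\le n/2$ we may realise it as follows. Take a uniformly random injection $\pi\colon\{1,\dots,2d\}\to[n]$, regard the index pairs $\{2k-1,2k\}$ ($1\le k\le d$) as a matching, and set $\mathbf{x}=\mathbf 1_{S}$ with $S=\{\pi(1),\pi(3),\dots,\pi(2d-1)\}$ (easily seen to be uniform on the slice). Now let $\zeta_1,\dots,\zeta_d\in\{-1,1\}$ be i.i.d.\ uniform and put $\mathbf{x}'=\mathbf 1_{S'}$, where $S'=\{u_1,\dots,u_d\}$ with $u_k=\pi(2k-1)$ if $\zeta_k=1$ and $u_k=\pi(2k)$ if $\zeta_k=-1$. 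The map $(\pi,\boldsymbol\zeta)\mapsto(\pi\circ\tau_{\boldsymbol\zeta},\boldsymbol\zeta)$, where $\tau_{\boldsymbol\zeta}$ transposes $2k-1$ and $2k$ for each $k$ with $\zeta_k=-1$, is a measure-preserving involution of the underlying probability space, and one checks that it carries the slice vector of the first coordinate from $\mathbf{x}$ to $\mathbf{x}'$; since the first coordinate of the image is again a uniform injection, $\mathbf{x}'$ is also uniform on the $d$-slice, so $\Pr(\mathbf{x}^TM\mathbf{x}+\mathbf{v}^T\mathbf{x}=x)=\Pr((\mathbf{x}')^TM\mathbf{x}'+\mathbf{v}^T\mathbf{x}'=x)$. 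Now condition on $\pi$. Since $(\mathbf{x}')^TM\mathbf{x}'+\mathbf{v}^T\mathbf{x}'=\sum_{k,l}M_{u_ku_l}+\sum_k v_{u_k}$ and each $u_k$ depends only on $\pi$ and $\zeta_k$, expanding $M_{u_ku_l}$ multilinearly in $(\zeta_k,\zeta_l)$ and using $\zeta_k^2=1$ gives
\[
(\mathbf{x}')^TM\mathbf{x}'+\mathbf{v}^T\mathbf{x}'=\boldsymbol\zeta^TN\boldsymbol\zeta+\mathbf{w}^T\boldsymbol\zeta+c
\]
for some $\mathbf{w}=\mathbf{w}(\pi)$, $c=c(\pi)$ and a symmetric $d\times d$ matrix $N=N(\pi)$ with zero diagonal, whose $(k,l)$-entry is nonzero exactly when the mixed difference
\[
\widetilde M_{kl}:=M_{\pi(2k-1)\pi(2l-1)}-M_{\pi(2k-1)\pi(2l)}-M_{\pi(2k)\pi(2l-1)}+M_{\pi(2k)\pi(2l)}
\]
is nonzero. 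The substitution $\zeta_k=2y_k-1$ with $y_k$ i.i.d.\ $\operatorname{Bernoulli}(1/2)$ turns the right-hand side into a quadratic polynomial in $\mathbf{y}$ with symmetric coefficient matrix $4N$, so \cref{thm:Kane} yields: if $N(\pi)$ has $\Omega(d^2)$ nonzero entries, then conditionally on $\pi$ this polynomial equals $x$ with probability at most $(\log d)^{O(1)}/\sqrt d$.

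It remains to bound $\Pr(N(\pi)\text{ has fewer than }cd^2\text{ nonzero entries})$ for an appropriate constant $c>0$; combining this with the trivial bound $1$ on that event completes the proof. For fixed $k\ne l$, the values $\pi(2k-1),\pi(2k),\pi(2l-1),\pi(2l)$ form a uniformly random ordered $4$-tuple of distinct elements of $[n]$, so $\Pr(\widetilde M_{kl}\ne0)$ equals the proportion of ordered $4$-tuples $(i,i',j,j')$ of distinct elements with $m_{ij}-m_{i'j}-m_{ij'}+m_{i'j'}\ne0$; by the hypothesis on $M$ (note that only $O(n^3)$ $4$-tuples have a repeated entry) this proportion is $\Omega(1)$, whence $\E[\#\{(k,l):\widetilde M_{kl}\ne0\}]=\Omega(d^2)$. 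The count $f(\pi):=\#\{(k,l):k\ne l,\ \widetilde M_{kl}\ne0\}$ changes by $O(d)$ when one value $\pi(i)$ is altered (only the $O(d)$ pairs meeting the affected matching edge are disturbed), so \cref{lem:injection-concentration}---applied to the injection $\pi$ with $\sum_i c_i^2=O(d)\cdot O(d^2)=O(d^3)$---gives $\Pr(f(\pi)<\tfrac12\E f(\pi))\le 2\exp(-\Omega(d))$, which is far smaller than $(\log d)^{O(1)}/\sqrt d$.

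The conceptual heart of the argument is that one can re-randomise on the Boolean slice through a genuine $\operatorname{Bernoulli}(1/2)$ perturbation, transferring the strong i.i.d.\ bound \cref{thm:Kane} without the $\sqrt d$ penalty of conditioning on the coordinate sum. The steps needing the most care are: the measure-preserving coupling that identifies $\mathbf{x}'$ with a uniform slice vector; the multilinear bookkeeping producing $N(\pi)$ and tying its support to the mixed-difference condition on $M$; and---the step I expect to be the main obstacle---verifying that the hypothesis on $M$ forces $\E f(\pi)=\Omega(d^2)$ together with the concentration of $f(\pi)$ via \cref{lem:injection-concentration}.
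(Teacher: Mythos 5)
Your proposal is correct and follows essentially the same route as the paper's proof: generate the slice vector by a uniformly random injection pairing up $2d$ positions together with independent sign variables choosing one position from each pair, expand the quadratic form multilinearly in the signs, show the resulting coefficient matrix has $\Omega(d^2)$ nonzero entries with probability $1-e^{-\Omega(d)}$ via \cref{lem:injection-concentration}, and conclude with \cref{thm:Kane}. The only differences are cosmetic (your explicit involution argument for uniformity of $\mathbf{x}'$, and your explicit handling of the degenerate $4$-tuples and of the symmetrisation of the coefficient matrix, which the paper delegates to a cited routine check).
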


\begin{proof}
We need a way to generate a random set of $d$ ones in a way that
involves independent random choices. Let $\pi$ be a uniformly random
injection $\{ 1,\dots,2d\} \to\{ 1,\dots,n\} $
and let $\mbf{\xi}=(\xi_{1},\dots,\xi_{d})$ be
a sequence of independent Rademacher random variables (satisfying $\Pr(\xi_i=1)=\Pr(\xi_i=-1)=1/2$, also independent
from $\pi$). Then, we choose the positions of the $d$ ones in $\mbf{x}$
as follows. For each $i\in\{ 1,\dots,d\} $, if $\xi_{i}=1$
we set $x_{\pi(i)}=1$, and if $\xi_{i}=-1$ we set $x_{\pi(i+d)}=1$.
That is to say, first we choose $d$ random pairs of entries, and
then for each we flip a coin to decide which of the two elements in
that pair to set equal to 1.

It is routine to check (see \cite[Lemma~2.8]{KST19} or the proof of \cite[Lemma~2.3]{KS20}) that $\mbf{x}^{T}M\mbf{x}+\mbf{v}^{T}\mbf{x}$
can be expressed by a quadratic polynomial of the form
\[
f_{\pi}(\mbf{\xi})=\sum_{1\le i<j\le d}a_{ij}\xi_{i}\xi_{j}+\sum_{1\le i\le d}a_{i}\xi_{i}+a_{0},
\]
where the coefficients $a_{ij},a_{i},a_{0}$ only depend on $\pi$
(not on $\mbf{\xi}$), and in particular
\[
a_{ij}=\frac{1}{4}(m_{\pi(i),\pi(j)}-m_{\pi(i),\pi(j+m)}-m_{\pi(i+m),\pi(j)}+m_{\pi(i+m),\pi(j+m)}).
\]
Let $X$ be the number of $a_{ij}$ that are nonzero. For each $i<j$
we have $\Pr(a_{ij})=\Omega(1)$, so $\E X=\Omega(d^{2})$.
For any $i$, modifying $\pi(i)$ affects $X$ by only
$O(d)$, so by \cref{lem:injection-concentration} we have 
\[
\Pr(X\le\E X/2)\le\exp\left(-\Omega\left(\frac{(\E X/2)^{2}}{d^{2}\cdot d}\right)\right)=e^{-\Omega(d)}.
\]
For any outcome of $\pi$ satisfying $X\ge\E X/2=\Omega(d^{2})$,
by \cref{thm:Kane} we
have $\Pr(f_{\pi}(\mbf{\xi})=x)\le (\log d)^{O(1)}/\sqrt d$.
\end{proof}

\section{Large level sets of kernel vectors}\label{sec:level-sets}

In this section we prove that for a typical outcome of $G\sim \mb{G}(n,p)$, vectors which are orthogonal to almost all the rows/columns of the adjacency matrix of the $k$-core cannot be ``too structured''. This will be accomplished using very minimal information about the $k$-core and some basic expansion properties in a typical $G\sim \GG(n,p)$. Related expansion-type ideas have appeared previously in the random matrix literature (see for example \cite{Cook17b,Cook19,LLTTY17, LLTTY19}), but the details here are much simpler. First, we define our structuredness property.

\begin{definition}\label{def:UKP}
Define the support $\on{supp}(\mbf w)$ of a vector $\mbf w$ to be the set of indices $i$ such that $w_i\ne 0$. Say that an $n\times n$ symmetric matrix $A$ satisfies the \emph{unstructured kernel property} $\on{UKP}(\ell,\zeta,\eta)$ if there is a set of $\zeta n$ indices $Q\subseteq \{1,\dots,n\}$ such that every nonzero vector $\mbf v\in \mb Q^n$ with $|{\on{supp}}(A\mbf v)|\le \ell$ and $\on{supp}(A\mbf v)\cap Q=\emptyset$ satisfies $\max_{x\in \RR}|\{i:v_i=x\}|\le (1-\eta)n$.
\end{definition}

For example, an $n\times n$ matrix satisfies $\on{UKP}(0,0,\eta)$ if and only if each of its nonzero kernel vectors have level sets of size at most $(1-\eta)n$.

Now, the main result we prove in this section is as follows.

\begin{lemma}\label{lem:UKP}
For constants $\theta,\lambda>0$, there is $\eta>0$ such that the following holds. Let $G\sim\GG(n,\lambda/n)$, and say an induced subgraph $G[U]$ is \emph{good} if
\begin{itemize}
    \item it has at least $\theta n$ odd-degree vertices,
    \item it has minimum degree at least 2,
    \item there are at most $(\eta/4)n$ vertices within distance 7 of a degree-2 vertex,
    \item there is no pair of degree-2 vertices within distance 4 of each other.
\end{itemize}
Then whp $G$ is such that the adjacency matrix of every good induced subgraph satisfies the unstructured kernel property $\on{UKP}(2,\eta/3,\eta)$.
\end{lemma}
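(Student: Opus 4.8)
The plan is to show that for a typical $G\sim\GG(n,\lambda/n)$, no vector $\mbf v$ can be close to a nonzero constant on almost all coordinates while being ``almost-orthogonal'' to the rows of a good induced subgraph. By subtracting the constant, it suffices to rule out vectors that are \emph{zero} on $(1-\eta)n$ coordinates, i.e.\ to show that if $\mbf v\ne \mbf 0$ has small support and $A_U\mbf v$ (where $A_U$ is the adjacency matrix of $G[U]$) has support of size at most $\ell=2$ avoiding a set $Q$ of $\zeta n = (\eta/3)n$ reserved indices, then a level set of $\mbf v$ must be small. Here is the subtle point forcing the technical generality of \cref{def:UKP}: writing $\mbf v = \mbf c + \mbf w$ with $\mbf c$ constant equal to $x$ and $\mbf w$ supported on a set $W$ with $|W|\le \eta n$, the condition $|\on{supp}(A_U\mbf v)|\le 2$ becomes $A_U\mbf w = -x\, (\text{degree vector})$ off a set of size $2$; when $x\ne 0$ this says $\sum_{j\in N(i)\cap W} w_j = -x\deg_{G[U]}(i)$ for all but $2$ vertices $i$. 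The odd-degree hypothesis kills the possibility that $\mbf w$ is itself nearly constant (which would need all degrees nearly equal modulo something), but more fundamentally we want to derive a contradiction from $W$ being small, using expansion.

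The core of the argument is an expansion estimate in $G\sim\GG(n,\lambda/n)$: with high probability, \emph{every} vertex set $W$ with $1\le |W|\le \eta n$ (for suitably small $\eta=\eta(\lambda,\theta)$) has the property that the number of vertices in $V(G)\setminus W$ with exactly one neighbour in $W$, call it the \emph{unique-neighbour boundary} $\partial_1 W$, satisfies $|\partial_1 W| \ge (1+\delta)|W|$ for some $\delta>0$ — or more precisely an estimate of the form $|\partial_1 W| \ge (1+\delta)|W| + \Omega(n\cdot\mathbbm 1[|W|=\Omega(n)])$, whichever formulation the later proof uses. This is a first-moment union-bound computation: the expected number of ``bad'' sets $W$ of size $s$ is at most $\binom{n}{s}$ times the probability that few vertices outside have a unique neighbour in $W$; for $s = o(n)$ the binomial is subexponential in $s$ and the probabilistic term is exponentially small in $s$ because a random vertex has a unique neighbour in $W$ with probability $\approx (\lambda s/n)e^{-\lambda s/n}$, and concentration makes large deviations below $(1+\delta)s$ cost $e^{-\Omega(s)}$ once the implied constant in $|W|\le \eta n$ is small. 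One has to be slightly careful near $s$ comparable to $\eta n$ and combine a few ranges, but this is routine.

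Granting the expansion property, I would argue as follows. Suppose $\mbf v\in\mb Q^n$ with the stated properties, and write $\mbf v=x\mbf 1 + \mbf w$ where $x\in\RR$ is the most popular value; then $W:=\on{supp}(\mbf w)$ has $1\le|W|\le\eta n$ (it is nonempty since $\mbf v\ne\mbf 0$ rules out $\mbf v$ being exactly constant, and $|W|\le \eta n$ is exactly the negation of the desired conclusion). For each $i\in\partial_1 W$ with unique $W$-neighbour $j$, and $i\notin\on{supp}(A_U\mbf v)\cup Q$ — there are at least $|\partial_1 W| - \ell - \zeta n \ge (1+\delta)|W| - 2 - (\eta/3)n$ of these — the $i$-th coordinate of $A_U\mbf v$ vanishes, giving $w_j = -x$ (if the unique neighbour relation is within $U$; vertices of $U$ outside $U$ contribute nothing, which is why one restricts attention to $i\in U$, costing at most $|V\setminus U|$, handled by noting good subgraphs are large or by absorbing into the constant count). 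Thus at least $(1+\delta)|W| - O(1) - (\eta/3)n$ of the unique-neighbour vertices force their partner $j\in W$ to satisfy $w_j=-x$; but each $j\in W$ can be the forced partner of many $i$'s, so this does not immediately bound $|W|$ — instead one uses that the map $i\mapsto j$ has image in $W$, so $W$ receives $\ge (1+\delta)|W| - O(1) - (\eta/3)n$ ``hits'', which is fine, and the real leverage is the following second application: now the vertices $j$ with $w_j=-x$ together with the complement of $W$ have $\mbf v$ constant equal to... no — rather, one iterates or one directly uses that $x\ne 0$ would force, via a double-counting of edges between $W$ and $\partial_1 W$ and the odd-degree vertices, a parity or magnitude contradiction. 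The main obstacle, and the part I expect to require the most care, is precisely orchestrating this final contradiction cleanly — choosing $\eta$ small enough relative to $\delta$ that $(1+\delta)|W| - (\eta/3)n$ beats $|W|$ when $|W|=\Omega(n)$, handling the $x=0$ case (where the condition reads $\sum_{j\in N(i)\cap W}w_j=0$ off a tiny set, and one wants to show $\mbf w=\mbf 0$, again by peeling off unique-neighbour vertices which force $w_j=0$ and inducting down on $|W|$), and dealing with the degree-$2$ vertices (which can break the unique-neighbour peeling, hence the hypotheses bounding their number and spacing) as well as the restriction to coordinates in $U$. I would organize the write-up by first proving the expansion lemma, then doing the $x=0$ peeling argument, then reducing $x\ne 0$ to it or to a direct parity count using the $\theta n$ odd-degree vertices.
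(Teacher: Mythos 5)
Your overall strategy (expansion properties of $\GG(n,\lambda/n)$ rule out almost-kernel vectors with a giant level set) is in the same spirit as the paper's, but the proposal has two genuine gaps. First, the key lemma you lean on is false as stated: it is not true that whp \emph{every} set $W$ with $1\le|W|\le\eta n$ satisfies $|\partial_1 W|\ge(1+\delta)|W|$. A typical $G\sim\GG(n,\lambda/n)$ has $\Theta(n)$ isolated vertices (so singletons already fail), and with probability bounded away from zero it contains short cycles all of whose vertices have degree $2$, for which $\partial_1 W=\emptyset$. Unique-neighbour expansion can therefore only be meaningful \emph{relative to a good induced subgraph} $G[U]$ (computing $\partial_1$ inside $G[U]$ and exploiting its minimum degree and the spacing of its degree-$2$ vertices), uniformly over all good $U$ simultaneously --- and proving that is essentially the whole content of the lemma. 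It is exactly where the paper's work lives: the density estimates of \cref{lem:expansion-estimates-2} together with a case analysis over components of size $1$, $2\le s\le 4$, and $5\le s<\eta n$ of (a $2$-step closure of) the relevant set. Your sketch asserts this step is ``routine'' first-moment counting over all $W\subseteq V(G)$, which it cannot be.

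Second, you explicitly leave the $x\ne0$ case unresolved, and the unique-neighbour mechanism alone cannot close it: a boundary vertex $i$ with unique $W$-neighbour $j$ only yields $w_j=-x\deg_{G[U]}(i)$, which is perfectly consistent with $j\in\on{supp}(\mbf w)$, so no contradiction follows. The missing idea is the paper's reduction: clear denominators, divide by the gcd, and reduce mod $2$, so that $(A[U]\mbf v)_i=0$ becomes ``$i$ has an even number of neighbours in the $1$-level set''; then all but two of the $\theta n$ odd-degree vertices must have a neighbour in the $0$-level set, and \cref{lem:expansion-estimates-1} forces that level set to have size $\ge\eta n$. This parity step is what makes the odd-degree hypothesis usable, and nothing in your sketch substitutes for it. A smaller but consequential misreading: you treat $Q$ as a set of rows on which no constraint is imposed (subtracting $\zeta n$ from the count of usable boundary vertices), whereas in \cref{def:UKP} the condition $\on{supp}(A\mbf v)\cap Q=\emptyset$ means the at most $\ell=2$ exceptional rows must \emph{avoid} $Q$; every row indexed by $Q$ still satisfies $(A\mbf v)_i=0$. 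With your reading, even the $x=0$ peeling only gives $|W|\lesssim(\eta/3)n/\delta$, which is not a contradiction with $|W|\le\eta n$, so the argument does not close even in the case you consider easy.
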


\begin{remark}
In order to prove \cref{thm:main} for $k\ge 5$, it would suffice to make the simpler definition that an induced subgraph is good if it has $\theta n$ odd-degree vertices and minimum degree at least 4. With this weaker condition, it would be easy to show that whp every good induced subgraph satisfies $\on{UKP}(2,0,\eta)$ (meaning that we do not need to consider a set of exceptional vertices $Q$ in the definition of the unstructured kernel property). Unfortunately, in the cases $k=3$ and $k=4$, with non-negligible probability there are a small number of ``bad configurations'' that force us to consider the more technical property $\on{UKP}(2,\zeta,\eta)$.
\end{remark}

\begin{remark}
Being a bit more careful with the estimates in this section, one can show that the property in \cref{lem:UKP} holds with probability $1-n^{-\Omega(1)}$.
\end{remark}

To prove \cref{lem:UKP} we will need the following basic expansion properties of $\GG(n,p)$. For vertex sets $S,T$ in a graph, let $e(S,T)$ be the number of ordered pairs $(x,y)\in S\times T$ such that $xy$ is an edge (so, edges with endpoints in $S\cap T$ are counted twice).

\begin{lemma}\label{lem:expansion-estimates-1}
For constants $\theta,\lambda>0$, there is $\eta>0$ such that the following holds. Let $G\sim \GG(n,\lambda/n)$. Then whp there is no set $S$ of fewer than $\eta n$ vertices such that at least $\theta n-2$ vertices of $G$ have a neighbour in $S$.
\end{lemma}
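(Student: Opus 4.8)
The plan is a first-moment (union bound) argument over all candidate sets $S$. Fix $\theta,\lambda>0$; we will choose $\eta>0$ small at the end. Suppose such a "bad" set $S$ exists with $|S|=s<\eta n$. By monotonicity in $s$ (a bad set of size $s$ is contained in a bad set of any larger size $\le \eta n$, and enlarging $S$ only enlarges the set of vertices with a neighbour in $S$) it suffices to treat $s=\lceil \eta n\rceil$, or more conveniently to sum over all $s\le \eta n$. For a fixed $S$ with $|S|=s$, the number of vertices of $G$ with a neighbour in $S$ is stochastically dominated by $s$ plus $\mathrm{Bin}(n, 1-(1-\lambda/n)^s)$, since each of the $\le n$ vertices outside $S$ independently has a neighbour in $S$ with probability $1-(1-\lambda/n)^s \le \lambda s/n$. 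Thus the expected number of vertices with a neighbour in $S$ is at most $s + \lambda s = O(\eta n)$, which is much smaller than $\theta n - 2$ once $\eta$ is small; so we are in a large-deviations regime and Chernoff gives
\[
\Pr\!\big(\text{at least }\theta n-2\text{ vertices have a neighbour in }S\big)\le \exp\!\big(-c\,\theta n \log(\theta/\eta)\big)
\]
for a suitable constant $c>0$ (using the Chernoff bound $\Pr(\mathrm{Bin}(n,q)\ge a)\le (enq/a)^a$ with $a=\Theta(\theta n)$ and $nq=O(\eta n)$, which beats the binomial coefficient $\binom{n}{s}\le (en/s)^s\le (e/\eta)^{\eta n}$).

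The steps, in order: (1) reduce to a fixed $s$ by a union bound over $s\le \eta n$ and over the $\binom{n}{s}$ choices of $S$; (2) for fixed $S$, bound the probability that $\ge \theta n-2$ vertices see $S$ via the Binomial domination above and a Chernoff tail; (3) multiply by $\binom{n}{s}\le (en/s)^s$ and sum over $s$; (4) choose $\eta$ small enough (depending on $\theta,\lambda$) that the exponential savings $\exp(-c\theta n\log(\theta/\eta))$ dominates the entropy term $(en/s)^s \le \exp(\eta n\log(e/\eta))$, so that the total bound is $o(1)$ (indeed $e^{-\Omega(n)}$, or $n^{-\Omega(1)}$ if one is less aggressive, consistent with the remark after the lemma). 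The choice of $\eta$ is the only real "design" step: one needs $\eta \log(1/\eta)$ to be small compared to $\theta$, which is achievable since the left side $\to 0$ as $\eta\to 0$ while $\theta$ is fixed.

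I do not expect any genuine obstacle here — this is a routine sparse-expansion estimate, and the main thing to get right is the bookkeeping: ensuring the Chernoff exponent (linear in $n$ with a coefficient involving $\log(1/\eta)$) strictly beats the union-bound entropy (also linear in $n$ but with coefficient $\eta\log(1/\eta)$), uniformly over all $s\le\eta n$. One mild care point is that the $\theta n - 2$ (rather than $\theta n$) is immaterial for the asymptotics; the "$-2$" is presumably carried for convenience in the later application of the lemma. A secondary care point is that vertices of $S$ itself may or may not have neighbours in $S$; handling this by simply including all of $S$ in the count (the "$s+$" above) is harmless since $s=o(\theta n)$.
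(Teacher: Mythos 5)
Your proof is correct and follows essentially the same first-moment union-bound strategy as the paper: both reduce by monotonicity to sets of size $\lfloor\eta n\rfloor$, union-bound over the $\binom{n}{s}\le(e/\eta)^{\eta n}$ choices of $S$, and beat this entropy term with a per-set probability of the form $(\text{small})^{\Theta(\theta n)}$. The only (immaterial) difference is in the per-set bound: the paper observes that $\theta n-2$ covered vertices force at least $\lfloor(\theta n-2)/2\rfloor$ edges incident to $S$ and union-bounds over such edge sets, whereas you exploit the independence of the indicators ``$v$ has a neighbour in $S$'' over $v\notin S$ and apply a binomial Chernoff tail directly to the vertex count.
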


\begin{lemma}\label{lem:expansion-estimates-2}
For constant $\lambda>0$ there is $\eta>0$ such that the following holds. Let $G\sim \GG(n,\lambda/n)$. Then whp:
\begin{enumerate}
    \item there is no subgraph with fewer than 12 vertices and edge-density greater than 1,
    \item there are at most $\log n$ vertices adjacent to a vertex in a 4-cycle, and
    \item there is no pair of disjoint vertex sets $S,W$ satisfying the inequalities
    \[5\le |S|<\eta n,\quad e(S,S\cup W)\ge \ceil{\vphantom{x^{x^x}}5|S|/2},\quad |W|\le \ceil{\vphantom{x^{x^x}}5|S|/2}/2-e(S)+1.\]
\end{enumerate}
\end{lemma}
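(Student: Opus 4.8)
Each of the three items is a first-moment (union-bound) computation over subgraphs or vertex sets of $\GG(n,\lambda/n)$, so the plan is to treat them one at a time and in each case bound the expected number of ``bad'' configurations by something that tends to $0$. Throughout I will use the standard estimates $\binom{n}{s}\le (en/s)^s$, the fact that a fixed set of $m$ potential edges is present with probability $(\lambda/n)^m$, and $\binom{\binom{s}{2}}{m}\le (es^2/(2m))^m$.

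\textbf{Item (1): no subgraph on fewer than $12$ vertices with edge-density $>1$.} Since the edge-density of an $s$-vertex subgraph exceeding $1$ means it has at least $s+1$ edges, I would bound the expected number of pairs $(S,F)$ where $|S|=s\le 11$ and $F$ is a set of $s+1$ edges inside $S$. This expectation is at most
\[
\sum_{s=3}^{11}\binom{n}{s}\binom{\binom{s}{2}}{s+1}\paren{\frac{\lambda}{n}}^{s+1}
\le \sum_{s=3}^{11} (en/s)^s\cdot O(1)\cdot \frac{\lambda^{s+1}}{n^{s+1}}
= O\paren{\frac{1}{n}},
\]
where the key point is that the factor $n^s$ from choosing $S$ is beaten by $n^{s+1}$ in the denominator, while $s\le 11$ keeps all the $s$-dependent constants bounded. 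Hence whp no such subgraph exists.

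\textbf{Item (2): at most $\log n$ vertices adjacent to a vertex of a $4$-cycle.} First, the expected number of $4$-cycles is $O(\binom{n}{4}(\lambda/n)^4)=O(1)$, and by a second-moment or Markov argument (and item~(1), which forbids many short cycles sharing edges) whp there are only $O(1)$ vertices lying on a $4$-cycle; in fact one can argue there are at most, say, $\log\log n$ such vertices whp. Given a fixed vertex $w$, its number of neighbours is $\mathrm{Bin}(n-1,\lambda/n)$, so $\Pr(\deg(w)\ge (\log n)/4)\le n^{-\omega(1)}$ by a Chernoff bound; a union bound over all $n$ vertices shows whp every vertex has degree $O(\log\log n)$, say. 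Combining: the set of vertices adjacent to a vertex on a $4$-cycle has size at most $(\text{number of vertices on a }4\text{-cycle})\times(\text{max degree}) = O(\log\log n)\cdot O(\log\log n)=o(\log n)$ whp. (One must be a little careful to condition cleanly: bound the number of $4$-cycle vertices and the maximum degree as separate whp events and intersect them.)

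\textbf{Item (3): the expanding pair condition.} This is the main obstacle, as it is the most delicate of the three and the choice of $\eta$ comes from here. Fix $s=|S|$ with $5\le s<\eta n$, write $r=\ceil{5s/2}$, and let $t=|W|\le r/2 - e(S)+1$. The plan is to union-bound over the choice of $S$, of $W$, of which $e(S)$ edges lie inside $S$, and of which $r - 2e(S)$ further edge-slots between $S$ and $W$ are present (note $e(S,S\cup W)\ge r$ forces at least $r-2e(S)$ edges from $S$ into $W$). The expected number of bad configurations is then at most
\[
\sum_{s=5}^{\eta n-1}\ \sum_{a\ge 0}\ \sum_{t\le r/2-a+1}\binom{n}{s}\binom{n}{t}\binom{\binom{s}{2}}{a}\binom{st}{r-2a}\paren{\frac{\lambda}{n}}^{r-a}.
\]
Using $t\le r/2-a+1\le 5s/4$, the product of the two binomials choosing $S$ and $W$ is at most $(en/s)^s (e n/(s))^{O(s)}=(n/s)^{O(s)}$, the edge-placement binomials contribute $(O(s))^{r-a}$ (using $st\le O(s^2)$ and $r-2a\le r$), and the probability factor is $(\lambda/n)^{r-a}$. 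Since $r-a\ge r/2 = \Theta(s)$ and in fact $r-a\ge r-e(S)\ge r - (s+1) \ge 3s/2 - 1$ (here item~(1)'s flavour, i.e.\ $e(S)\le s$ for $s<12$, and a crude $e(S)\le\binom{s}{2}$ bound does \emph{not} suffice — instead one exploits that the constraint on $t$ already forces $a\le r/2+1$, equivalently the surviving power of $n$ in the denominator, $r-a$, always exceeds the power $s+O(s)$ in the numerator \emph{provided $s$ is not too large}), the summand is of the form $\paren{C s / n}^{\Theta(s)}$ for an absolute constant $C$. For $s\ge 5$ fixed this is $O(n^{-\Theta(1)})$, and summing the geometric-type tail over $s$ up to $\eta n$ with $\eta$ chosen so that $C\eta<1$ makes the whole sum $o(1)$. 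I expect the careful bookkeeping of exponents — verifying that the net power of $n$ is always negative, which is exactly where the constant $5/2$ in $\ceil{5|S|/2}$ and the halving in the bound on $|W|$ are used — to be the crux; the inequalities are engineered so that ``edges cost a factor $1/n$ and there are enough more edges than vertices to win.''
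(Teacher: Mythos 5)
Your overall strategy---a first-moment union bound for each item---is exactly the paper's, and your treatment of item (1) is correct. However, there are two genuine problems.

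For item (2), your claim that a union bound over vertices gives maximum degree $O(\log\log n)$ whp is false: the maximum degree of $\GG(n,\lambda/n)$ is $\Theta(\log n/\log\log n)$ whp, and the Chernoff computation you actually state only yields maximum degree at most $(\log n)/4$. Multiplying that by your (correct) $\omega(1)$ bound on the number of vertices lying on $4$-cycles gives a quantity exceeding $\log n$, so the argument as written does not close. The fix is much simpler and is what the paper does: a vertex adjacent to a vertex of a $4$-cycle determines a configuration with $5$ vertices and $5$ edges, so the expected number of such vertices is $O(n^5p^5)=O(1)$, and Markov's inequality gives at most $\log n$ of them whp.

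For item (3), your union bound is set up correctly (it is essentially the paper's ``bad quadruple'' count, with $|W|$ summed over rather than padded to its maximal size, and your observation that the constraint on $|W|$ already forces $e(S)=O(s)$ is a valid substitute for the paper's separate step ruling out $e(S)\ge 1.1s$). You also correctly locate the decisive inequality: the net power of $n$ is $s+|W|-(r-a)\le 1-s/4<0$. But the estimates you state---bounding the vertex-choice binomials by $(n/s)^{O(s)}$ and the edge-placement binomials by $(O(s))^{r-a}$---do not yield the claimed $(Cs/n)^{\Theta(s)}$ summand: with unexamined constants in those exponents the bound is of the form $s^{O(s)}n^{1-s/4}$, which blows up for $s=\Theta(n)$ (for instance, replacing $\binom{n}{t}$ by $n^t$ already forfeits a factor $t^{-t}=s^{-\Theta(s)}$ that is indispensable near $s=\eta n$). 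The exponents of both $n$ and $s$ must be tracked exactly, and the paper does this by splitting into $s\le(\log n)^{1/10}$ (where crude $2^{s^2}$-type bounds cost only $n^{o(1)}$ and each term contributes a fixed negative power of $n$, so the summand is emphatically not geometric in $s$ there) and larger $s$ (where keeping the full $(en/t)^t$ factors extracts the base $\lambda^{5/2}(s/n)^{1/4}<1$, which is where $\eta$ is chosen). This two-range bookkeeping is the actual content of the proof rather than routine verification, so as it stands your argument for (3) is incomplete.
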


The proofs of the estimates in \cref{lem:expansion-estimates-1,lem:expansion-estimates-2} are quite standard, but involve some rather tedious calculations, so we first present the deduction of \cref{lem:UKP}.

\begin{proof}[Proof of \cref{lem:UKP}]
Fix an outcome of $G$ satisfying the conclusions of \cref{lem:expansion-estimates-1,lem:expansion-estimates-2} for the given constants $\theta,\lambda > 0$, choosing appropriate $\eta>0$. Consider any good $G[U]$, and write $A[U]$ for its adjacency matrix. Let $Q\subseteq U$ be the set of vertices in $G[U]$ adjacent to a vertex in a 4-cycle or within distance 7 of a degree-2 vertex (so $|Q|\le (\eta/3) n$).

We need to show that for each vector $\mbf v\in \mb Q^U$ satisfying $|{\on{supp}}(A[U]\mbf v)|\le 2$ and $\on{supp}(A[U]\mbf v)\cap Q=\emptyset$, the level sets of $\mbf v$ have size at most $(1-\eta)n$. Multiplying $\mbf v$ by a suitable integer and dividing by the greatest common divisor of its entries, it actually suffices to show the same property for vectors $\mbf{v}\in(\mb{F}_2)^U$, working mod 2.

Let $\mbf{v}\in(\mb{F}_2)^U\setminus\{\mbf{0}\}$ be such that $|{\on{supp}}(A[U]\mbf v)|\le 2$ and $\on{supp}(A[U]\mbf v)\cap Q=\emptyset$. For $x\in \mb F_2$ let $S_x=\{i\in U: v_i= x\}$. We will show that $|S_0|,|S_1|\ge \eta n$.

\textbf{The $0$-set.} First, we consider $S_0$. Every vertex $i\in U\setminus\on{supp}(A[U]\mbf v)$ has $(A[U]\mbf v)_i=0$ (mod 2), meaning that $i$ has an even number of neighbours in $S_1=U\setminus S_0$. So, all but at most two of the odd-degree vertices in $G[U]$ must have a neighbour in $S_0$. Since $G[U]$ is good, it has at least $\theta n$ odd-degree vertices so by the property in \cref{lem:expansion-estimates-1} we have $|S_0|\ge \eta n$ as desired.

\textbf{The $1$-set.} For the rest of the proof we consider $S_1$ (which is nonempty, since $\mbf v$ is nonzero). Let $F$ be an auxiliary graph on the vertex set $S_1$ with an edge $uv$ when there is a path of length at most $2$ between $u,v$ in $G[U]$. Now, consider a connected
component $F[S]$ of $F$. It suffices to prove that $|S|\ge\eta n$.

Let $s=|S|$, and let $T$ be the set of vertices in $U$ which have a neighbour in $S$. As before, every $i\in U\setminus\on{supp}(A[U]\mbf v)$ has an even number of neighbours in $S_1$, so in fact all vertices of $T\setminus \on{supp}(A[U]\mbf v)$ have at least two neighbours in $S_1$ (therefore at least two neighbours in $S$, noting that all its neighbors in $S_1$ are in $S$).

First, it is straightforward to rule out the case $s=1$: in this case, all the neighbours
of the single vertex in $S$ would be in $\on{supp}(A[U]\mbf v)$ (since
they would have odd degree into $S$ and hence $S_1$). Since $|{\on{supp}}(A[U]\mbf v)|\le2$,
this would imply the existence of a degree-2 vertex adjacent to a vertex in $\on{supp}(A[U]\mbf v)$,
which would contradict the choice of $\mbf v$ with respect to $Q$.

So, we assume $s>1$. Let $S^{*}\subseteq S$ be the set of vertices in $S$ which have degree 2 with respect to $G[U]$. In $G[U]$, there are no degree-2
vertices within distance 4 of each other, so in $F[S]$,
the closed neighbourhoods of vertices in $S^{*}$ are disjoint (in a graph, the \emph{closed neighbourhood} of a vertex $v$ is its set of neighbours together with $v$ itself). Since $F[S]$ is connected
and has more than one vertex, each vertex has at least one neighbour, so the closed neighbourhoods of vertices in $S^*$ have size at least two, implying that $|S^{*}|\le |S|/2=s/2$. 
Therefore $e(S,T)\ge\ceil{5s/2}$.
(This edge count, and every further edge count, is with respect to $G$.)

It follows that $e(S,T\backslash S)\ge\ceil{5s/2}-2e(S)$.
In case we have that $|T\backslash S|\le\ceil{(\ceil{5s/2}-2e(S)+1)/2}$
we let $W=T\backslash S$, and otherwise let $W$ be a subset of
$T\backslash S$ obtained by repeatedly deleting vertices (starting
with vertices in $\on{supp}(A[U]\mbf v)$, if any are present in $T\backslash S$)
until $\ceil{(\ceil{5s/2}-2e(S)+1)/2}$
vertices remain. In the latter case, all but at most one of the vertices
in $W$ have at least two neighbours in $S$, implying that $e(S,W)\ge\ceil{5s/2}-2e(S)$.
Either way, property (3) in \cref{lem:expansion-estimates-2} is violated, unless $2\le s\le4$ or $s\ge\eta n$.
So, it remains to rule out the cases $2\le s\le4$, with some more careful variants of the above argument.
\begin{itemize}
\item First suppose $|T\cap\on{supp}(A[U]\mbf v)|=0$. In this
case, every vertex in $T$ has at least two neighbours in $S$.
\begin{itemize}
\item For $s=2$, the sets $S$ and $T$ are disjoint, so $e(S,T)\ge\ceil{5s/2}=5$. This is only possible if $|T|\ge3$, but this would imply the existence of a complete bipartite subgraph $K_{2,3}$, violating
property (1) in \cref{lem:expansion-estimates-2}.
\item For $3\le s\le4$: starting
from $T\backslash S$, delete vertices (if necessary) to obtain a
set $W$ containing at most $\ceil{(\ceil{5s/2}-2e(S))/2}\le(\ceil{5s/2}+1)/2-e(S)$
vertices such that $e(S,W)\ge\ceil{5s/2}-2e(S)$.
Then $S\cup W$ has at least $\ceil{5s/2}-e(S)$ edges
and at most $s+(\ceil{5s/2}+1)/2-e(S)$ vertices,
violating property (1) in \cref{lem:expansion-estimates-2}.
\end{itemize}
\item Otherwise, suppose $2\le s\le4$ and $1\le|T\cap\on{supp}(A[U]\mbf v)|\le2$. Recall that $F[S]$ is connected, so all the vertices in $S$ (being within distance 7 of a vertex in $Q$) have degree
at least 3, so $e(S,T)\ge3s$.
\begin{itemize}
\item For $s=2$, there are one or two vertices in $T$ with
exactly one neighbour in $S$ (these vertices cannot be in $Q$), and to have $e(S,T)\ge 6$ there must then be
at least two vertices with exactly two neighbours in $S$. But this is impossible, because it would imply that there is a vertex not in $Q$ adjacent to a vertex in a
4-cycle.
\item For $3\le s\le4$: starting from $T\backslash S$, delete
vertices (if necessary; starting from vertices in $\on{supp}(A[U]\mbf v)$)
to obtain a set $W$ containing at most $\ceil{(3s-2e(S)+1)/2}\le(3s+2)/2-e(S)$
vertices such that $e(S,W)\ge3s-2e(S)$. Then
$S\cup W$ has at least $3s-e(S)$ edges and at most $s+(3s+2)/2-e(S)$
vertices, violating property (1) in \cref{lem:expansion-estimates-2}.
\end{itemize}
\end{itemize}
This completes the proof. 
\end{proof}

Now, we prove \cref{lem:expansion-estimates-1,lem:expansion-estimates-2}.
\begin{proof}[Proof of \cref{lem:expansion-estimates-1}]
Let $s=\floor{\eta n}$ and $f=\floor{(\theta n-2)/2}$. It suffices to show that whp there is no ``bad pair'' $(S,F)$, where $S$ is a set of $s$ vertices and $F$ is a set of $f$ edges each with at least one endpoint in $S$. Indeed, then for every set of size $s$ there are fewer than $2f\le\theta n-2$ vertices incident to $S$.

The probability a bad pair exists is at most
\[
\binom{n}{s}\binom{\binom s 2+s(n-s)}{f}p^{f}\le \left(\frac{en}s\right)^s\left(\frac{epns}f\right)^f=o(1)
\]
for sufficiently small $\eta$.
\end{proof}

\begin{proof}[Proof of \cref{lem:expansion-estimates-2}]
First, (1) and (2) trivially hold whp by Markov's inequality, observing that the expected number of subgraphs on 11 vertices with density greater than 1 is $O(n^{11}p^{12})=o(1)$, and the expected number of vertices adjacent to a 4-cycle is $O(p^5n^5)=O(1)$.

It remains to consider (3). This comes down to a union bound calculation, but we need to be rather careful with the estimates. It is convenient to handle the small $s$ and large $s$ cases separately; let $N_1=\floor{(\log n)^{1/10}}$ and $N_2=\floor{\eta n}$. First, we show that whp there is no set $S$ of $N_1\le s\le N_2$ vertices and set $F$ of $\ceil{1.1s}$ edges between vertices of $S$. Indeed, the probability that such a situation occurs is at most
\begin{align*}
    \sum_{s=N_1}^{N_2}\binom{n}{s}\binom{\binom{s}2}{\ceil{1.1s}}p^{\ceil{1.1s}}&\leq \sum_{s=N_1}^{N_2} \left(\frac{en}{s}\right)^s \left(\frac{esp}{\ceil{1.1s}}\right)^{\ceil{1.1s}}\le \sum_{s=N_1}^{N_2} \left(O(np(sp)^{0.1})\right)^s=o(1).
\end{align*}

To complete the proof that (3) holds whp, it suffices to show that whp there is no ``bad quadruple'' $(S,R,F,D)$, where:
\begin{itemize}
    \item $S,R$ are disjoint vertex sets with sizes $3\le s\le N_2$ and $r_{s,f}:=\floor{\ceil{5s/2}/2-f+1}$, respectively;
    \item $F$ is a set of $f$ edges between vertices in $S$;
    \item $D$ is a set of $\ceil{5s/2}-2f$ edges between $S$ and $R$;
    \item $f<\ceil{1.1s}$ or $s<N_1$.
\end{itemize}
The probability a bad quadruple exists is at most
\begin{equation}
\sum_{s,f}\binom{n}{s}\binom{n}{r_{s,f}}\binom{\binom s 2}{f}\binom{sr_{s,f}}{\ceil{5s/2}-2f}p^{\ceil{5s/2}-f},\label{eq:bad-quadruple}
\end{equation}
where the sum is over all $3\le s\le N_2$ and $0\le f\le \binom s 2$ satisfying $f<{1.1s}$ or $s<N_1$. We now show that this sum is $o(1)$.

First, we handle the range where $3\le s<N_1$. The contribution to \cref{eq:bad-quadruple} from these terms is at most
\begin{align*}
    \sum_{s=5}^{N_1}\sum_{f=0}^{\binom s 2}n^sn^{r_{s,f}}2^{s^2}2^{sr_{s,f}}p^{\ceil{5s/2}-f}\le \sum_{s=5}^{N_1}\sum_{f=0}^{\binom s 2}n^{o(1)}n^{-\ceil{5s/2}/2+s+1}\leq n^{-1/2+o(1)}=o(1).
\end{align*}
Second, we handle the range where $N_1\le s\le N_2$. The contribution to \cref{eq:bad-quadruple} from these terms is at most
\begin{align}
    &\sum_{s=N_1}^{N_2}\sum_{f=0}^{1.1s}\left(\frac{en}{s}\right)^s\left(\frac{en}{r_{s,f}}\right)^{r_{s,f}}\left(\frac{es^2}{f}\right)^{f}\left(\frac{sr_{s,f}}{\ceil{5s/2}-2f}\right)^{\ceil{5s/2}-2f}p^{\ceil{5s/2}-f}\notag\\
    &\qquad\leq \sum_{s=N_1}^{N_2}\sum_{f=0}^{1.1s}
    e^{O(s)} \left(\frac{n}{s}\right)^{s+\ceil{5s/2}/2-f+1}\left(\frac{s^2}{f}\right)^{f}s^{\ceil{5s/2}-2f}p^{\ceil{5s/2}-f}\notag\\
    &\qquad\le \sum_{s=N_1}^{N_2}\sum_{f=0}^{1.1s}
    e^{O(s)} \left(
    n^{9/4}s^{1/4}p^{5/2}(n/s)^{2/s}
    \right)^s\left(\frac{s}{fnp}\right)^{f}.\label{eq:bad-quadruple-2}
\end{align}
Since $f\le 1.1s$ we have $(s/(fnp))^f=e^{O(s)}$, and since $N_1\le s\le N_2$ we have
\[n^{9/4}s^{1/4}p^{5/2}(n/s)^{2/s}\le \lambda^{5/2}\eta^{1/4}e^{o(1)}.\]
So, if $\eta>0$ is sufficiently small then the sum in \cref{eq:bad-quadruple-2} is at most $\sum_{s=N_1}^{N_2} (1.1s+1)e^{-\Omega(s)}=o(1)$.
\end{proof}

\section{Boosting the corank}\label{sec:boosting}
In this section we prove several lemmas closely related to the strategy of Costello, Tao and Vu in \cite{CTV06}. The first lemma shows that (under certain conditions) if one extends a symmetric matrix by adding a new random row and column (this corresponds to adding a new vertex to a random graph), then the corank typically decreases. Recall the definition of the unstructured kernel property $\on{UKP}(\ell,\zeta,\eta)$ from \cref{def:UKP} (roughly speaking, this is the property that the vectors orthogonal to all but $\ell$ rows must have large level sets, allowing the possibility of a small number of exceptional rows).

\begin{lemma}\label{lem:decoupling}
Fix a constant $\eta>0$, let $g\ge1$ and let $d\ge 1$.
Consider an $n\times n$ symmetric matrix $A$ with the unstructured kernel property $\on{UKP}(2,\eta/3,\eta)$ and corank at most $g$. Consider a subset $E\subseteq\{ 1,\dots,n\}$
of size at least $n(1-\eta/3)$. Let $\mbf{x}=(x_{1},\dots,x_{n})$
be a random zero-one vector, such that the restriction $\mbf x_E$ to the entries indexed by $E$ is a uniformly random zero-one vector with exactly $d$ ones (and the restriction $\mbf x_{\ol E}$ to entries not indexed by $E$ is deterministic). Add
$\mbf{x}$ as a new row and column of $A$ (and put a zero in the
new diagonal entry) to obtain a new matrix $A'$. Then $\rank A'\ge n-g+2$ with probability at least $1-(\log d)^{O(1)}/\sqrt d$.
\end{lemma}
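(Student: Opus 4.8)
The plan is to pass to working over $\mathbb{F}_2$ — since adding a row/column can only increase the rank over $\mathbb{Q}$ by at least as much as it increases over $\mathbb{F}_2$ (a deterministic statement: if the $\mathbb{F}_2$-rank goes up by $2$, so does the $\mathbb{Q}$-rank, because the $2\times 2$ minor witnessing the $\mathbb{F}_2$ increase reduces mod $2$ to something nonzero), it suffices to show $\operatorname{rank}_{\mathbb{F}_2} A' \ge \operatorname{rank}_{\mathbb{F}_2} A + 2$ with the claimed probability. Write the kernel $K = \ker_{\mathbb{F}_2} A$, of dimension $m := \corank_{\mathbb{F}_2} A \le g$; note that the symmetry of $A$ means $K$ is also the left kernel. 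When we append the vector $\mbf x$ as a new row and column to get $A'$, the standard Costello--Tao--Vu bordering computation shows that $\corank A'$ drops by $2$ from $\corank A$ precisely when (i) $\mbf x$ is not orthogonal to all of $K$ — i.e. $\mbf x \notin K^\perp$ — and (ii) a second, ``quadratic'' condition holds ensuring the new diagonal block is nondegenerate. Concretely, after using (i) to pick $\mbf w \in K$ with $\mbf w^T \mbf x = 1$, the remaining obstruction is that $\mbf x^T B \mbf x \ne 0$ for an appropriate symmetric matrix $B$ built from a pseudoinverse of $A$ together with correction terms in $\mbf w$; this is where the quadratic Littlewood--Offord input is needed.

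First I would make the linear step precise: since $A$ satisfies $\on{UKP}(2,\eta/3,\eta)$ with exceptional set $Q$ of size $(\eta/3)n$, and since $E$ has size at least $(1-\eta/3)n$ so that $|Q \cap E| \le (\eta/3)n \le |E|/2$... more to the point, every nonzero $\mbf w \in K$ has $|\on{supp}(A\mbf w)| = 0 \le 2$ and $\on{supp}(A\mbf w)\cap Q = \emptyset$ trivially, so by $\on{UKP}$ every such $\mbf w$ has all level sets of size at most $(1-\eta)n$. In particular, restricting to the coordinates of $E$, the vector $\mbf w_E$ is not constant on a $(1-\eta/3)$-fraction of $E$ — one has to check the arithmetic, but a level set of $\mbf w_E$ of size $>(1-\eta/3)|E| \ge (1-\eta/3)(1-\eta/3)n$ would be too big. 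Hence $\mbf w_E$ over $\mathbb{F}_2$ has both a positive number of $0$'s and a positive number of $1$'s among a linear fraction of coordinates, so \cref{lem:linear-LO-slice} (applied over $\mathbb{F}_2$, or rather its direct consequence that $\Pr(\mbf w^T \mbf x = c) = O((\eta d)^{-1/2})$ for the slice-uniform $\mbf x_E$, absorbing the deterministic contribution from $\mbf x_{\ol E}$ into the target value $c$) gives that $\mbf x \notin K^\perp$ except with probability $O((\eta d)^{-1/2}) = O(d^{-1/2})$, and in fact with probability $1-O(d^{-1/2})$ we can find a fixed $\mbf w \in K$ with $\mbf w^T\mbf x = 1$.

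Next comes the quadratic step, which I expect to be the main obstacle. Conditioned on the linear event, we need the bilinear/quadratic form $\mbf x^T B \mbf x + (\text{linear})$ to be nonzero over $\mathbb{F}_2$, where $B = B(\mbf w)$ is a symmetric matrix coming from the Schur-complement analysis of the bordered matrix $A'$ relative to a rank-$(n-m)$ submatrix of $A$. The content is to verify that $B$ has $\Omega(n^4)$ ``rectangular'' $4$-tuples $(i,i',j,j')$ with $b_{ij} - b_{i'j} - b_{ij'} + b_{i'j'} \ne 0$ — the hypothesis of \cref{prop:quadratic-LO-general} — using the unstructuredness of $A$ (and the fact that $A$ has full rank up to the $m \le g$ exceptional directions, so $B$ genuinely involves a pseudoinverse that cannot be too degenerate). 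Once that $\Omega(n^4)$-rectangular-structure is established, \cref{prop:quadratic-LO-general} gives $\Pr(\mbf x^T B \mbf x + \cdots = 0) \le (\log d)^{O(1)}/\sqrt d$, and a union bound over the linear failure event and the quadratic failure event yields the claimed $1 - (\log d)^{O(1)}/\sqrt d$ bound, with the caveat that when $\corank A' $ might still be only $n-g+1$ or $n-g$ we use $\operatorname{rank} A \ge n-g$ to conclude $\operatorname{rank} A' \ge n-g+2$ in the good case. The delicate part is confirming that the relevant derived matrix $B$ inherits enough non-structure from the $\on{UKP}$ hypothesis on $A$ — in particular that passing from $A$ to a Schur complement does not accidentally collapse all the rectangular differences — and handling the small-$d$ regime where the bound is vacuous.
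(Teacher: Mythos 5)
Your reduction to $\mathbb{F}_2$ is where the argument breaks, in two separate ways. First, the ``deterministic statement'' you lean on is false: for $A=(2)$ and $\mbf x=(1)$ the bordered matrix $A'=\left(\begin{smallmatrix}2&1\\1&0\end{smallmatrix}\right)$ has $\mathbb{F}_2$-rank increasing by $2$ (from $0$ to $2$) while the $\mathbb{Q}$-rank increases only by $1$. More to the point, the hypothesis gives $\operatorname{rank}_{\mathbb{Q}}A\ge n-g$, but $\operatorname{rank}_{\mathbb{F}_2}A$ can be strictly smaller (already for the adjacency matrix of a triangle the two ranks differ), so establishing $\operatorname{rank}_{\mathbb{F}_2}A'\ge\operatorname{rank}_{\mathbb{F}_2}A+2$ only yields $\operatorname{rank}_{\mathbb{Q}}A'\ge\operatorname{rank}_{\mathbb{F}_2}A+2$, which need not be $n-g+2$. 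Second, $\on{UKP}$ (\cref{def:UKP}) is a statement about vectors $\mbf v\in\mathbb{Q}^n$ with $A\mbf v$ of small support computed over $\mathbb{Q}$; a vector in $\ker_{\mathbb{F}_2}A$ need not be the mod-$2$ reduction of such a rational vector (the reduction in the proof of \cref{lem:UKP} goes only in the other direction), so you are not entitled to conclude that $\mathbb{F}_2$-kernel vectors have small level sets. The argument has to be run over $\mathbb{Q}$.

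Over $\mathbb{Q}$, the paper's case split is cleaner than the Costello--Tao--Vu ``corank drops by one'' scheme you sketch, and it isolates exactly the verification you leave as a hope. If $\rank A\le n-1$, only the linear step is needed: a kernel vector $\mbf c$ is $\eta$-unstructured by $\on{UKP}$, its restriction to $E$ is still $(2\eta/3)$-unstructured, \cref{lem:linear-LO-slice} gives $\mbf c^T\mbf x\ne0$ with probability $1-O(d^{-1/2})$, and then $\mbf x$ lies in neither the row span nor (by symmetry) the column span, so the rank jumps by $2$ to at least $n-g+2$ --- no quadratic condition at all. If $\rank A=n$, one writes $\det A'=\mbf x^TM\mbf x+\mbf v^T\mbf x_E+x$ with $M$ the adjugate of $A$, and the $\Omega(n^4)$ rectangular condition of \cref{prop:quadratic-LO-general} is precisely what must be checked: it follows because the difference of the $i$th and $i'$th columns of $M$ is orthogonal to all rows of $A$ except the $i$th and $i'$th, hence is zero or $\eta$-unstructured by $\on{UKP}(2,\eta/3,\eta)$ applied with $\ell=2$ (this is why the parameter $2$ appears in the hypothesis). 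Your matrix $B$ is a Schur complement depending on the chosen $\mbf w$, and you give no argument that it inherits the rectangular non-degeneracy from $A$; that verification is the substance of the lemma, not a routine check, and in your framework it is genuinely less accessible than via the adjugate.
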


\begin{proof}
Let $r=\rank A$. First suppose $r\le n-1$. Without loss of generality,
we assume that the first $r$ columns $\mbf{v}_{1},\dots,\mbf{v}_{r}$
of $A$ are linearly independent. Thus, the last column $\mbf{v}_{n}$
of $A$ can be written as a linear combination of the first $r$ rows
in a unique way: $\mbf{v}_{n}=c_{1}\mbf{v}_{1}+\dots+c_{r}\mbf{v}_{r}$.
Therefore, the vector $\mbf{c}=(c_{1},\dots,c_{r},0,\dots,0,-1)$
is orthogonal to every row of $A$, hence all its level sets have size at most $(1-\eta)n$ (for convenience, we say such a vector is \emph{$\eta$-unstructured}).
Note that any restriction of $\mbf{c}$ to a set $E$ of $n(1-\eta/3)$
indices is still $(2\eta/3)$-unstructured, so by \cref{lem:linear-LO-slice}, $\mbf{c}^{T}\mbf{x}=\mbf{c}_E^{T}\mbf{x}_E+\mbf{c}_{\ol E}^{T}\mbf{x}_{\ol E}\ne0$ with probability $1-O(1/\sqrt d)$. But if $\mbf{c}^{T}\mbf{x}\ne0$
this means that adding $\mbf{x}$ as a new row (thereby appending
$x_{i}$ as a new entry to each $\mbf{v}_{i}$) increases the
rank of $A$. That is to say, $\mbf{x}$ does not lie in the
span of the rows of $A$, and by symmetry it also does not lie in
the span of the columns of $A$. So, adding $\mbf{x}$ as a
new row and column increases the rank twice, meaning that $\rank A'\ge r+2\ge n-g+2$.

On the other hand, suppose $r=n$. Observe that
\[\det A'=\mbf{x}^{T}M\mbf{x}=\mbf{x}_E^{T}M_E\mbf{x}_E+\mbf v^T\mbf x_E+x,\]
for some (non-random) $\mbf v\in \RR^E$ and $x\in \RR$, where $M$ is the adjugate matrix of $A$ and $M_E$ is its restriction to the rows and columns in $E$.

Since $AM$ is a nonzero multiple of the identity matrix, the $i$th column of $M$ is orthogonal to all rows of $A$ except the $i$th, and the difference between the $i$th and $j$th columns of $M$ is therefore orthogonal to all rows of $A$ except the $i$th and the $j$th. Recalling the definition of $\on{UKP}(2,\eta/3,\eta)$, it follows that there is $Q\subseteq\{1,\ldots,n\}$ of size $\eta n/3$ such that each column of $M$ (or row of $M$, by symmetry) not indexed by $Q$ is $\eta$-unstructured, and every difference of two rows or two columns, neither of which are indexed by $Q$, is either zero or $\eta$-unstructured.

Fix any $\eta$-unstructured column $\mbf{w}=(w_1,\dots,w_n)$ of $M$ (any column not indexed by $Q$ will do). There are at least $(\eta/3)(1-\eta)n^2$ pairs of indices $i,i'\in E\setminus Q$ such that $w_i\ne w_{i'}$. For each such pair consider the difference $\mbf v_i-\mbf v_{i'}$, where $\mbf v_i$ and $\mbf v_i$ are the $i$th and $i'$th row vectors of $M$. It follows that $\mbf v_i-\mbf v_{i'}$ is nonzero, so is $\eta$-unstructured. It follows that there are at least $(2\eta/3)(1-\eta)n^2$ pairs of indices $j,j'\in E$ such that the $j$th and $j'$th entries of $\mbf v_i-\mbf v_{i'}$ differ. For each of the resulting $\Omega(n^4)$ choices of $i,i',j,j'$, we have $m_{ij}-m_{i'j}-m_{ij'}+m_{i'j'}\neq 0$. By \cref{prop:quadratic-LO-general} it follows that with probability at least $1-(\log d)^{O(1)}/\sqrt d$ we have $\det A'\ne0$ (meaning that $\rank A'=n+1\ge n+2-g$).
\end{proof}

The second lemma in this section shows that if a random process takes nonnegative values and at each time-step tends to move towards zero, then the process is likely to end at zero. If we construct a random graph by iteratively adding many vertices with random neighbourhoods, we can combine this lemma with \cref{lem:decoupling} to show that the resulting random graph is likely to have an adjacency matrix with corank zero (i.e., be nonsingular).
\begin{lemma}\label{lem:random-walk}
Let $X_{0},\dots,X_{n}$ be a sequence of nonnegative integer random
variables satisfying the following conditions.
\begin{enumerate}
\item $X_{0}\le n/2$.
\item for all $1\le t\le n$, we have $X_{t}\le X_{t-1}+1$.
\item The sequence ``drifts towards zero'' in the sense that for any $x_{0},\dots,x_{t-1}$:
\begin{enumerate}
\item If $x_{t-1}\ne0$ we have $\Pr(X_{t}<x_{t-1}\,|\,X_{0}=x_{0},\dots,X_{t-1}=x_{t-1})\ge1-p$
\item If $x_{t-1}=0$ we have $\Pr(X_{t}=0\,|\,X_{0}=x_{0},\dots,X_{t-1}=x_{t-1})\ge1-p.$
\end{enumerate}
\end{enumerate}
Then $\Pr(X_{n}=0)\ge1-1000p$.
\end{lemma}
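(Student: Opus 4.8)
The plan is to count the \emph{bad steps} of the process: call step $t$ bad if $X_{t-1}\neq 0$ but $X_t\ge X_{t-1}$, or if $X_{t-1}=0$ but $X_t\neq 0$. Hypothesis~(3) says precisely that, conditioned on any history $X_0=x_0,\dots,X_{t-1}=x_{t-1}$, step $t$ is bad with probability at most $p$; and since the event ``steps $t_1<\dots<t_j$ are all bad'' is measurable with respect to $X_0,\dots,X_{t_j}$, conditioning on the steps of a prescribed set one at a time (in increasing order) shows that any fixed set of $m$ time-steps is simultaneously bad with probability at most $p^{m}$. The other elementary fact I would record is that, by hypothesis~(2), while $X$ is positive a non-bad step decreases it by at least $1$ and a bad step increases it by at most $1$; while $X$ is $0$, a non-bad step leaves it at $0$.

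The deterministic core of the argument is that $X_n\neq 0$ forces an unusually bad suffix of the time interval. If $X_t>0$ throughout $\{0,\dots,n\}$, then summing the per-step inequalities over $[1,n]$ gives $1\le X_n\le X_0-(n-B)+B$, where $B$ is the number of bad steps in $[1,n]$, and since $X_0\le n/2$ this forces $B\ge n/4+1/2$. Otherwise let $s$ be the last time in $\{0,\dots,n-1\}$ with $X_s=0$; step $s+1$ must then be bad (a non-bad step would keep $X$ at $0$), so $X_{s+1}=1$, and summing the per-step inequalities over $(s+1,n]$ with all values positive forces at least $\lceil(L+1)/2\rceil$ bad steps in $(s,n]$, where $L=n-s$. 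Hence $\{X_n\neq 0\}$ lies in the union, over $L=1,\dots,n$, of the events ``the length-$L$ suffix window $(n-L,n]$ contains at least $\lceil(L+1)/2\rceil$ bad steps,'' together with the single event ``$[1,n]$ contains at least $n/4+1/2$ bad steps.''

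Finally I would union bound. The length-$L$ suffix event has probability at most $\binom{L}{\lfloor L/2\rfloor+1}p^{\lfloor L/2\rfloor+1}\le 4^{\lfloor L/2\rfloor}p^{\lfloor L/2\rfloor+1}$; summing over $L\ge 1$ gives $p+2\sum_{m\ge1}4^mp^{m+1}=p+O(p^2)$. The full-interval event has probability at most $\binom{n}{b}p^{b}\le(4ep)^{b}\le 4ep$, where $b=\lceil n/4+1/2\rceil\ge 1$ and I used $b>n/4$ (so $n/b<4$) and $4ep\le1$. Thus $\Pr(X_n\neq 0)\le(1+4e)p+O(p^2)$, which is below $1000p$ once $p$ is smaller than an absolute constant; when $p$ exceeds that constant $1-1000p$ is negative and the claim is vacuous. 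I do not expect a genuine obstacle in this proof; the one point to get right is that the short suffix windows, not the long ones, dominate the union bound (a length-$1$ window needs only a single bad step, contributing $p$), so the crude per-window estimate $(4p)^{L/2}$—which would only yield an $O(\sqrt p)$ bound—must be replaced by the sharper binomial estimate above.
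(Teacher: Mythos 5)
Your proof is correct, but it takes a genuinely different route from the paper. You run a combinatorial union-bound argument: classify each step as ``bad'' with conditional probability at most $p$, observe that any fixed set of $m$ steps is simultaneously bad with probability at most $p^m$, and then show deterministically that $X_n\neq 0$ forces either at least $n/4+1/2$ bad steps on all of $[1,n]$ (if the walk never touches zero) or a majority of bad steps in the suffix after the last visit to zero. The paper instead uses an exponential-moment argument: it sets $Y_t=(1/\sqrt p)^{X_t}-1$, proves the one-step recursion $\E[Y_{t+1}\mid Y_t,\dots,Y_0]\le (2\sqrt p)Y_t+2\sqrt p$, iterates to get $\E[Y_n]\le 8\sqrt p$, and finishes with Markov's inequality. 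Your approach is closer in spirit to the original Costello--Tao--Vu counting argument; it is longer and requires the careful case split on the last return to zero, but it is transparent about where the probability is lost (the length-$1$ suffix window contributes the dominant term $p$) and in fact yields a better constant, roughly $(1+4e)p+O(p^2)$ versus the paper's $9p$. The paper's supermartingale computation is shorter and avoids the case analysis entirely, at the cost of being less illuminating about the structure of the failure event. All the delicate points in your write-up check out: the measurability argument justifying $\Pr(\text{all $m$ steps bad})\le p^m$, the bound $\binom{L}{\lfloor L/2\rfloor+1}\le 4^{\lfloor L/2\rfloor}$, the convergence of the geometric sums for $p$ below an absolute constant, and the observation that the claim is vacuous for larger $p$.
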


\begin{proof}
We may assume $p\le 1/1000$. We may also assume $n\ge 10$, because otherwise $\Pr(X_n = 0)\ge (1-p)^n\ge 1-np\ge 1-9p$.

Define $Y_{t}=(1/\sqrt p)^{X_{t}}-1$. We claim that
\begin{equation}\label{eq:walk}
    \mb{E}[Y_{t+1}|Y_t,\ldots,Y_0]\le (2\sqrt{p})Y_t + 2\sqrt{p}.
\end{equation}
To see this, we distinguish cases: if $Y_t = 0$ then $\mb{E}[Y_{t+1}|Y_t,\ldots,Y_0]\le (1/\sqrt{p}-1)p\le \sqrt{p}$, which implies \cref{eq:walk} with room to spare.
If $Y_t>0$ then we have
\[\mb{E}[Y_{t+1}+1|Y_t,\ldots,Y_0]\le \sqrt{p}(Y_t+1)+p \cdot (1/\sqrt{p})(Y_t+1),\]
which also implies \cref{eq:walk}.
We deduce from \cref{eq:walk} that
\[\mb{E}[Y_n]\le \mb{E}[Y_0](2\sqrt{p})^n + (2\sqrt{p})^{n-1}+\cdots+(2\sqrt{p})^2+ 2\sqrt{p}\le(1/\sqrt{p})^{n/2}(2\sqrt{p})^{n} + 4\sqrt{p}\le 8\sqrt{p}\]
where the final inequality follows since $p\le 1/1000$ and $n\ge 10$. We then apply Markov's inequality:
\[\Pr(X_n\neq 0) = \Pr(Y_n\neq 0) = \Pr(Y_n\ge 1/\sqrt{p}-1)\le \mb{E}[Y_n]/(1/\sqrt{p}-1)\le 9p.\qedhere\]
\end{proof}

\section{Preliminaries on degree-constrained random graphs}\label{sec:degree-constrained}

For a set $V$ and positive integers $m,k$, let $\mc K(V,m,k)$ be the uniform distribution on graphs with vertex set $V$, exactly $m$ edges, and minimum degree at least $k$. The reason we are interested in this random graph model is the following symmetry property of the $k$-core. This property is well-known and appears implicitly for instance in \cite{CW06}.
\begin{lemma}\label{lem:rotate-core}
For any $p$, let $G\sim \GG(n,p)$, let $V$ be the vertex set of $\core_k(G)$ and let $m$ be the number of edges in $\core_k(G)$. If we condition on any outcome of $V,m$, then the conditional distribution of $\core_k(G)$ is $\mc K(V,m,k)$.
\end{lemma}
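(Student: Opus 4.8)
The plan is to establish a clean symmetry: for a fixed vertex set $V\subseteq\{1,\dots,n\}$ and a fixed integer $m$, any two graphs $H_1,H_2$ on vertex set $V$ with exactly $m$ edges and minimum degree at least $k$ are equally likely to arise as $\core_k(G)$. Conditioning on the (random) values $V(\core_k(G))$ and $e(\core_k(G))$ then forces $\core_k(G)$ to be uniform over such graphs, which is exactly the assertion that the conditional law is $\mc K(V,m,k)$.

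First I would fix one such graph $H$ (on vertex set $V$, with $e(H)=m$ and minimum degree $\ge k$) and decompose
\[
\{\core_k(G)=H\}=\{G[V]=H\}\cap\mathcal F_V,
\]
where $\mathcal F_V$ is the event that running the $k$-peeling process on $G$ (iteratively deleting vertices of current degree less than $k$) removes precisely the vertices of $\overline V:=\{1,\dots,n\}\setminus V$. The structural point is that, conditioned on $\{G[V]=H\}$, the subgraph $G[V]$ already has minimum degree $\ge k$, so $V\subseteq V(\core_k(G))$ (since $\core_k(G)$ is the maximal induced subgraph of minimum degree $\ge k$); hence no vertex of $V$ is ever deleted during peeling. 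Therefore every deletion is of some $v\in\overline V$, and at the moment of its deletion its current degree equals $e_G(\{v\},V)$ plus its number of not-yet-deleted neighbours inside $\overline V$. As $V$-vertices are never deleted, this quantity — and thus the entire peeling dynamics on $\overline V$ — is determined by the edges of $G$ incident to $\overline V$ only. Using confluence of the peeling process (every deletion order terminates at $\core_k(G)$), I would conclude that on $\{G[V]=H\}$ the event $\mathcal F_V$ is exactly $\{V(\core_k(G))=V\}$, and that $\mathcal F_V$ is measurable with respect to the edges incident to $\overline V$.

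Now the edges of $G$ inside $V$ (which determine $\{G[V]=H\}$) are disjoint from the edges incident to $\overline V$ (which determine $\mathcal F_V$), so independence of the edges of $\GG(n,p)$ gives
\[
\Pr(\core_k(G)=H)=\Pr(G[V]=H)\cdot\Pr(\mathcal F_V)=p^{m}(1-p)^{\binom{|V|}{2}-m}\cdot q_V,
\]
where $q_V:=\Pr(\mathcal F_V)$ depends only on $n,p,k,V$ and not on $H$. Summing over all graphs $H$ on $V$ with $m$ edges and minimum degree $\ge k$ writes $\Pr\big(V(\core_k(G))=V,\ e(\core_k(G))=m\big)$ as this common value times the number of such $H$; hence, conditioned on $\{V(\core_k(G))=V,\ e(\core_k(G))=m\}$, each such $H$ has equal probability, which is the definition of $\mc K(V,m,k)$.

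The only genuinely delicate step is the claim that "the peeling removes exactly $\overline V$" depends on the edges incident to $\overline V$ alone; this relies on the two observations that (i) because $G[V]$ already has minimum degree $\ge k$, no vertex of $V$ is ever removed, so edges inside $V$ are irrelevant to the set of $\overline V$-vertices removed, and (ii) the peeling process is confluent, so "exactly $\overline V$ is removed" is order-independent and coincides with $V(\core_k(G))=V$. Everything else is a routine independence computation, so I expect no further obstacles.
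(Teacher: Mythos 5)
Your proposal is correct and rests on the same key observation as the paper's own proof: once $G[V]$ has minimum degree at least $k$, the peeling process never deletes a vertex of $V$, so whether exactly $\overline V$ is removed depends only on the edges meeting $\overline V$ and not on the internal structure of $G[V]$. The paper packages this as a probability-preserving swap of $G[V]=H_1$ for $H_2$ (valid since $e(H_1)=e(H_2)=m$), whereas you unpack the same fact into an explicit independence factorization $\Pr(\core_k(G)=H)=p^{m}(1-p)^{\binom{|V|}{2}-m}q_V$; the two arguments are equivalent.
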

\begin{proof}
Consider any two graphs $H_1,H_2$ on the vertex set $V$ with $m$ edges and minimum degree at least $k$. For any outcome of $G$ yielding $\core_k(G)=G[V]=H_1$, we can simply replace $G[V]$ with $H_2$ to obtain an outcome of $G$ yielding $\core_k(G)=H_2$ (iteratively deleting vertices with degree less than $k$ yields $G[V]$ in both cases). This means that $H_1$ and $H_2$ are equally likely to occur as $\core_k(G)$.
\end{proof}

\begin{remark}\label{rmk:hitting-time}
The conclusion of \cref{lem:rotate-core} holds not just for $G\sim \GG(n,p)$ but for any ``sufficiently symmetric'' random graph. For example, it holds when $G$ is a random graph with a specified number of edges. Actually, it is straightforward to adapt the entire proof of \cref{thm:main} to this setting. Indeed, the only additional inputs we need are some very weak estimates on the number of vertices and edges in $\core_k(G)$, and analogues of the expansion estimates in \cref{lem:expansion-estimates-1,lem:expansion-estimates-2}.
\end{remark}

For convenience, we write $\mc K(n,m,k)$ instead of $\mc K(\{1,\dots,n\},m,k)$. The degree sequence of a typical $G\sim \mc K(n,m,k)$ was studied in work of Cain and Wormald~\cite{CW06}, as follows\footnote{Strictly speaking part (2) of \cref{lem:core-degree-sequence} does not follow from the statements of \cite[Theorem~2 and ~Lemma~1]{CW06}, but it is a much simpler fact than (1) and can be easily derived using Cain and Wormald's methods.}.

\begin{definition}\label{def:CW}
Consider integers $k,m,n$ satisfying $k\ge 3$ and $m\ge kn/2$. Let $Z_k(\lambda) = \sum_{t=k}^\infty\lambda^t/t!$, and let $\lambda$ be the unique solution to $\lambda Z_k'(\lambda)/Z_k(\lambda)=2m/n$. (This unique solution $\lambda$ is easily seen to exist; see e.g.~\cite[Eq.~(4)]{CW06}). Define $\rho_t =(\lambda^t/t!)/Z_k(\lambda)$ for $t\ge k$; these values can be interpreted as probability masses associated with a truncated Poisson distribution.
\end{definition}

\begin{lemma}[{\cite[Theorem~2 and ~Lemma~1]{CW06}}]\label{lem:core-degree-sequence}
Fix constants $k\ge 3$ and $\varepsilon > 0$. Suppose $\varepsilon n \le m-kn/2\le n/\varepsilon$ and let $G\sim \mc{K}(n,m,k)$. Then, using the notation of \cref{def:CW}, whp the following properties hold.
\begin{enumerate}
\item for all $t\ge 3$, there are $\rho_t n+O(n^{3/4})$ vertices of degree $t$.
\item all vertices have degree at most $\log n$.
\end{enumerate}
\end{lemma}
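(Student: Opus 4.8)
The plan is to establish \cref{lem:core-degree-sequence} in essentially the way Cain and Wormald do: via a configuration-model-style representation of $\mc K(n,m,k)$, a Poissonization/local-limit argument for the degree sequence, and then a union bound for the maximum degree. I would not attempt an independent self-contained proof from scratch; instead I would carefully reduce to (and lightly extend) the statements actually available in \cite{CW06}.

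First, recall the standard description of $\mc K(n,m,k)$ via \emph{pairings with a minimum-degree constraint}: choose a degree sequence $\mbf d=(d_v)_{v}$ with $d_v\ge k$ and $\sum_v d_v = 2m$, weighted by $\prod_v 1/d_v!$ (these are the weights making the induced graph distribution uniform after discarding multigraphs), take $2m$ points grouped into $n$ buckets of sizes $d_v$, choose a uniformly random perfect matching, and condition on the resulting multigraph being simple. The key classical fact is that for $m = \Theta(n)$ with $\varepsilon n \le m - kn/2 \le n/\varepsilon$, the probability of simplicity is bounded below by a positive constant, so whp statements about the pairing model transfer to $\mc K(n,m,k)$ at the cost of an $O(1)$ factor. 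Under the weighting $\prod_v 1/d_v!$ subject to $\sum d_v = 2m$ and $d_v \ge k$, the coordinates $d_v$ are (conditionally on their sum) i.i.d.\ copies of a Poisson$(\lambda)$ truncated to $\{k,k+1,\dots\}$, where $\lambda$ is chosen so that the mean of this truncated Poisson equals $2m/n$ — this is exactly how \cref{def:CW} is set up, with $\rho_t$ the point masses of that truncated Poisson. So the first step is: \emph{the degree sequence of $\mc K(n,m,k)$ is, up to the $O(1)$ simplicity cost, distributed as $n$ i.i.d.\ truncated-Poisson$(\lambda)$ variables conditioned on their sum being $2m$.}

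For part (1), I would then run a standard local-central-limit / concentration argument. The number $N_t$ of coordinates equal to $t$ among $n$ i.i.d.\ draws conditioned on a fixed sum is a hypergeometric-type quantity concentrated around $\rho_t n$; to get the $O(n^{3/4})$ error one can either (a) condition on the sum, use the fact that the conditioning event has probability $\Theta(n^{-1/2})$ by a local limit theorem, and apply a union bound over $t$ of an unconditioned tail bound like $\Pr(|N_t - \rho_t n| \ge n^{3/4}) \le \exp(-\Omega(\sqrt n))$ (Chernoff/Azuma on the i.i.d.\ sequence), absorbing the $n^{-1/2}$ loss; or (b) quote \cite[Theorem~2]{CW06} directly, which already gives this. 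The $O(1)$ simplicity cost is harmless for whp statements with exponentially small exceptional probability. For part (2), the maximum degree bound follows from a crude first-moment estimate: in the i.i.d.\ truncated-Poisson picture, $\Pr(d_v > \log n) \le \sum_{t > \log n} \lambda^t/t! = n^{-\omega(1)}$ since $\lambda = O(1)$, so a union bound over $v \in \{1,\dots,n\}$ (again paying only the $O(1)$ simplicity factor, and the $O(n^{-1/2})$ local-limit factor from the sum conditioning) shows whp no vertex has degree exceeding $\log n$.

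The \textbf{main obstacle} — and the reason the footnote in the excerpt flags that part (2) does not literally follow from the cited statements — is handling the conditioning on $\sum_v d_v = 2m$ cleanly. Tail bounds for i.i.d.\ sequences do not automatically survive conditioning on their sum without quantitative control, so one must verify that the conditioning event has probability $\Theta(n^{-1/2})$ (a local limit theorem for sums of truncated Poissons, which is routine since the summands have finite exponential moments and a non-lattice-avoiding support, i.e.\ gcd of support differences is $1$ for $k\ge 3$), and then check that every exceptional probability we union-bound over is $o(n^{-1/2})$, so that dividing by the $\Theta(n^{-1/2})$ conditioning probability still yields $o(1)$. For the max-degree bound this is immediate since $n^{-\omega(1)} \cdot n = n^{-\omega(1)} = o(n^{-1/2})$; for part (1) the Chernoff bound $\exp(-\Omega(\sqrt n))$ comfortably beats $n^{-1/2}/n$. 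Everything else is bookkeeping: translating between the weighted-degree-sequence model, the pairing model, and $\mc K(n,m,k)$, and verifying the hypothesis $\varepsilon n \le m - kn/2 \le n/\varepsilon$ keeps $\lambda$ (hence all the $\rho_t$ and the exponential-moment constants) bounded away from $0$ and $\infty$.
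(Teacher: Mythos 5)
Your proposal is correct and follows essentially the route the paper intends: the paper simply cites Cain--Wormald for part (1) and notes in a footnote that part (2) is easily derived by their methods, and your argument is exactly that method (the $\prod_v 1/d_v!$-weighted pairing model, i.e.\ i.i.d.\ truncated Poissons conditioned on their sum, plus a local limit theorem to pay for the sum conditioning and a Chernoff/union bound for the degree counts and the maximum degree). You also correctly identify and handle the one genuine gap acknowledged by the paper, namely that part (2) needs the crude tail bound $n\cdot n^{-\omega(1)}=o(n^{-1/2})$ to survive the $\Theta(n^{-1/2})$ conditioning cost.
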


Recall from the outline in \cref{sec:outline} that we will need to extract some high-degree vertices from the $k$-core, which we will later use to ``boost'' the corank. So, we will also need to understand the degree sequence of the subgraph of a typical outcome of $G\sim \mc K(n,m,k)$ obtained by removing a subset of high-degree vertices. This degree sequence is a little more complicated to describe.
\begin{definition}\label{def:CW-mod}
Consider integers $\Delta,k,m,n$ satisfying $\Delta,k\ge 3$ and $m\ge kn/2$. Let $\lambda,Z_k(\lambda),\rho_t$ be as in \cref{def:CW}, and further define:
\begin{align*}
\beta = \alpha\sum_{t=\Delta}^\infty\rho_t,\qquad
\gamma = \frac{\alpha n}{2m}\sum_{t=\Delta}^\infty t\rho_t = \frac{\alpha}{Z_k'(\lambda)}\sum_{t=\Delta-1}^{\infty}\frac{\lambda^t}{t!},
\end{align*}
\begin{align*}
\zeta_{j,t}=\binom jt\gamma^{j-t}(1-\gamma)^t,\qquad\delta_t  = (1-\alpha)\sum_{j=k}^\infty\rho_j \zeta_{j,t},\qquad \delta_t'  = \alpha\sum_{j=k}^{\Delta-1}\rho_j\zeta_{j,t}
\end{align*}
for $j,t\ge 0$.
\end{definition}
\begin{lemma}\label{lem:bulk-sequence}
Fix $\Delta\ge k\ge 3$ and $\varepsilon,\alpha > 0$. Suppose $\varepsilon n \le m-kn/2\le n/\varepsilon$ and let $G\sim \mc{K}(n,m,k)$. Let $V=\{1,\dots,n\}$, $S=\{1,\dots,\floor{\alpha n}\}$ and $T=\{v\in S:\deg_G(v)\ge \Delta\}$. Then whp the following hold.
\begin{enumerate}
    \item $|T|=\beta n+O(n^{3/4})$.
    \item For all $t\ge 0$, there  are $\delta_t n+O(n^{4/5})$ vertices $v\in V\setminus S$ with $\deg_{V\setminus T}(v)=t$.
    \item For all $t\ge 0$, there  are $\delta_t' n+O(n^{4/5})$ vertices $v\in S\setminus T$ with $\deg_{V\setminus T}(v)=t$.
\end{enumerate}
\end{lemma}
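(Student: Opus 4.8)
The plan is to condition on the degree sequence of $G\sim\mc K(n,m,k)$, reduce to a uniform random graph with that degree sequence (equivalently, the configuration model conditioned on simplicity), and then read off all three conclusions via a first‑moment computation combined with a bounded‑differences concentration inequality. \textbf{Step 1: the size and degree profile of $T$.} By \cref{lem:core-degree-sequence}, whp $\mbf d=(\deg_G(v))_v$ is \emph{typical}: it has $\rho_t n+O(n^{3/4})$ vertices of each degree $t\ge k$ and maximum degree at most $\log n$ (so also $\#\{v:\deg_G(v)\ge\Delta\}=(\sum_{t\ge\Delta}\rho_t)n+O(n^{3/4})$, using a routine tail estimate). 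Condition on any typical $\mbf d$. The event ``the degree multiset equals that of $\mbf d$'' is invariant under relabelling vertices, and $\mc K(n,m,k)$ is itself relabelling‑invariant, so conditionally the map $v\mapsto\deg_G(v)$ is a uniformly random function of the prescribed type; hence for each $t$ the number of degree‑$t$ vertices lying in $S$ is hypergeometric with mean $\alpha\rho_t n+O(n^{3/4})$, and a Chernoff bound for the hypergeometric distribution shows it is whp within $O(n^{2/3})$ of its mean, simultaneously over all $t\le\log n$. Summing the appropriate ranges of $t$ gives $|T|=\alpha n\sum_{t\ge\Delta}\rho_t+O(n^{3/4})=\beta n+O(n^{3/4})$, proving (1); the same estimates yield $\#\{v\in T:\deg_G(v)=j\}=\alpha\rho_j n+O(n^{3/4})$ for $j\ge\Delta$, $\#\{v\in V\setminus S:\deg_G(v)=j\}=(1-\alpha)\rho_j n+O(n^{3/4})$ for $j\ge k$, and $\sum_{u\in T}\deg_G(u)=2m\gamma+O(n^{3/4}(\log n)^{O(1)})$ --- data we reuse below.

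\textbf{Step 2: reduction to the configuration model.} Conditioned on a typical $\mbf d$, $G$ is uniform over simple graphs with degree sequence $\mbf d$, hence distributed as the configuration model on the half‑edge set (a uniformly random perfect matching of the $2m$ half‑edges, with $v$ owning $\deg_G(v)$ of them, then collapsing half‑edges to edges) \emph{conditioned} on the resulting multigraph being simple. Since $\mbf d$ has maximum degree $O(\log n)$ and there are $\Theta(n)$ half‑edges, the expected number of loops and repeated edges in the configuration model is $(\log n)^{O(1)}$, so the probability of simplicity is at least $e^{-(\log n)^{O(1)}}$. Every concentration estimate below has failure probability $e^{-\Omega(n^{3/5})}$, which dwarfs this bound, so any event holding with that probability in the unconditioned configuration model also holds whp after conditioning on simplicity. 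It therefore suffices to prove (2) and (3) in the unconditioned configuration model on a typical $\mbf d$, where $T$ is a fixed set determined by $\mbf d$ and $S$.

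\textbf{Step 3: first moment and concentration.} Fix $t\ge0$. For a vertex $v\notin T$ with $\deg_G(v)=j\le\log n$, a sampling‑without‑replacement estimate for how the $j$ half‑edges of $v$ are matched shows that the probability that exactly $t$ of them are matched to half‑edges owned by $T$ is $\binom jt\gamma^{j-t}(1-\gamma)^t+O(n^{-1/4}(\log n)^{O(1)})=\zeta_{j,t}+O(n^{-1/4}(\log n)^{O(1)})$, using $\sum_{u\in T}\deg_G(u)/(2m)=\gamma+O(n^{-1/4}(\log n)^{O(1)})$ from Step 1 and that $\zeta_{j,t}$ is $O(\log n)$‑Lipschitz in $\gamma$. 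Summing over $v\in V\setminus S$ with the degree counts from Step 1 gives
\[\E\bigl[\#\{v\in V\setminus S:\deg_{V\setminus T}(v)=t\}\bigr]=(1-\alpha)n\sum_{j\ge k}\rho_j\zeta_{j,t}+O\bigl(n^{3/4}(\log n)^{O(1)}\bigr)=\delta_t n+O(n^{4/5}),\]
and summing the thinning probabilities $\zeta_{j,t}$ against the degree profile of $T$ from Step 1 gives the analogous identity for $\#\{v\in T:\deg_{V\setminus T}(v)=t\}$, with right‑hand side $\delta_t' n+O(n^{4/5})$. (Truncating the infinite sums at degree $\log n$, and the presence of the $(\log n)^{O(1)}$ loops/multi‑edges --- which are the only source of discrepancy between the multigraph and simple‑graph values of $\deg_{V\setminus T}$ --- are both negligible at this scale.) For concentration, expose the perfect matching one pair of half‑edges at a time (repeatedly take the smallest unmatched half‑edge and pick its partner uniformly among those still unmatched); revealing a partner fixes one edge and so changes each of the two counts by $O(1)$. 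A standard bounded‑differences (Azuma--Hoeffding) argument for this sequential matching exposure, over the $m=O(n)$ steps, gives $\Pr(|X-\E X|\ge n^{4/5})\le e^{-\Omega(n^{3/5})}$ for each count $X$; a union bound over the $O(\log n)$ relevant values of $t$ (the rest being trivial), together with Step 2 and the typicality of $\mbf d$, yields (2) and (3).

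\textbf{Main obstacle.} The argument is conceptually routine; the genuine work lies in tracking error terms, and the one delicate point is the conditioning on simplicity in Step 2 --- since the probability of a simple outcome is only $e^{-(\log n)^{O(1)}}$, every concentration bound must fail with probability far below this, which is exactly why we prove everything with the generous window $O(n^{4/5})$ rather than aiming for something closer to $\sqrt n$. An alternative that sidesteps the simplicity conditioning entirely is to run a direct switching argument on uniform graphs with degree sequence $\mbf d$, computing the first and second moments of the counts in question; this is more laborious but is the route to take if sharper error terms are wanted.
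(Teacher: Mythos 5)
Your proposal is correct and follows essentially the same route as the paper: condition on a typical degree sequence (using \cref{lem:core-degree-sequence} plus exchangeability and hypergeometric Chernoff bounds for part (1) and the degree profiles of $T$, $S\setminus T$, $V\setminus S$), then pass to the configuration model, compute the expected counts via approximate binomial thinning with parameter $\gamma$, and conclude with a bounded-differences/switching concentration inequality for the random matching. The only differences are immaterial: the paper transfers from the multigraph to the simple graph using that the simplicity probability is $\Omega(1)$ (since $\sum_v d_v^2=O(n)$ whp) rather than your $e^{-(\log n)^{O(1)}}$ conditioning argument, and note the small wording slip in Step 3 where ``exactly $t$ of them are matched to half-edges owned by $T$'' should read ``owned by $V\setminus T$'' to match the formula $\zeta_{j,t}=\binom{j}{t}\gamma^{j-t}(1-\gamma)^{t}$ you correctly use.
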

For the proof of \cref{lem:bulk-sequence} (and for other computations in this and later sections), we work with a random graph model called the \emph{configuration model}.

\begin{definition}
For a degree sequence $\mbf d=(d_1,\dots,d_n)$, consider a set of $r=d_1+\dots+d_n$ points, grouped into $n$ labelled ``buckets'' of sizes $d_1,\dots,d_n$. A \emph{configuration} is a perfect matching on the $r$ points, consisting of $r/2$ disjoint edges. Now, given a configuration, contracting each of the buckets to a single point gives rise to a multigraph with degree sequence $d_1,\dots,d_n$ (where we use the convention that loops contribute 2 to the degree of a vertex).
\end{definition}
If we consider the multigraph $G^*$ arising from a uniformly random configuration, and condition on the event that this multigraph is a simple graph, then we obtain the uniform distribution on graphs with degree sequence $\mbf d$. Moreover, if $d_1^2+\dots+d_n^2=O(n)$ (as is typically the case for the degree sequence of any subgraph of $G\sim \mc K(n,m,k)$, as follows from \cref{lem:core-degree-sequence}), it is well known that this simplicity probability is $\Omega(1)$. So, any property that holds whp for $G^*$ also holds whp for a uniformly random graph with degree sequence $\mbf d$.

\begin{proof}[Proof of \cref{lem:bulk-sequence}]
Let $\mbf d=(d_1,\dots,d_n)$ be the degree sequence of $G\sim\mc{K}(n,m,k)$. By symmetry, all reorderings of $\mbf d$ are equally likely to occur, so (1) holds whp by \cref{lem:core-degree-sequence} and a Chernoff bound for hypergeometric distributions (see for example \cite[Theorem~2.10]{JLR00}). Similarly, $\mbf d$ has the following properties whp.
\begin{itemize}
    \item For all $t\ge k$, there are $(1-\alpha)\rho_t n+O(n^{3/4})$ vertices $v\in V\setminus S$ with $d_v=t$.
    \item For all $k\le t<\Delta$, there are $\alpha\rho_t n+O(n^{3/4})$ vertices $v\in S\setminus T$ with $d_v=t$.
    \item For all $t\ge \Delta$, there are $\alpha\rho_t n+O(n^{3/4})$ vertices $v\in T$ with $d_v=t$.
\end{itemize}

Condition on an outcome of $\mbf d$ satisfying the above properties and such that each $d_i\le \log n$ (this degree sequence determines $T$). Now, $G$ is now a uniformly random graph with degree sequence $\mbf d$, and $d_1+\dots+d_n=2m$. Consider a set of $2m$ points grouped into buckets of sizes $d_1,\dots,d_n$. Let $\sigma$ be a uniformly random bijection from $\{1,\dots,2m\}$ into these $2m$ points, so the perfect matching with edges $\sigma(1)\sigma(2),\; \sigma(3)\sigma(4),\;\dots,\;\sigma(2m-1)\sigma(2m)$ is a uniformly random configuration. Let $G^*$ be the multigraph obtained by contracting this configuration. It suffices to show that $G^*[V\setminus T]$ satisfies properties (2) and (3) whp.

For each $t\in \NN$, let $X_t$ (respectively $X_t'$) be the number of vertices $v\in V\setminus S$ (respectively, $v\in S\setminus T$) with $\deg_{V\setminus T}(v)=t$ in $G^*$. Modifying $\sigma$ by a transposition affects $X_t$ (respectively, $X_t'$) by at most 4, so by a concentration inequality for random permutations (see for example \cite[Eq.~(29)]{McD98}), whp $X_t=\E X_t+O(n^{3/4})$ and $X_t'=\E X_t'+O(n^{3/4})$ for each $t$. Therefore, it suffices to show that $\E X_t=\delta_t n+O(n^{3/4}(\log n)^3)$ and $\E X_t'=\delta_t' n+O(n^{3/4}(\log n)^3)$ for each $t$.

Consider the set $P_T$ of all the points in buckets corresponding to vertices in $T$. The number of such points is
\[\sum_{t\ge\Delta}\frac{t}{Z_k(\lambda)}\frac{\lambda^t}{t!}\alpha n + O(n^{3/4}(\log n)^2).\]
That is to say, the fraction of the $2m$ points occupied by $P_T$ is equal to $\gamma+O(n^{-1/4}(\log n)^2)$. Now, for a vertex $v\in V\setminus T$, there are $d_v\le \log n$ points in the bucket corresponding to $v$ (let $b_v$ be the set of points in this bucket), and the number of points in this bucket that are matched with points in $P_T$ is very nearly binomially distributed with parameters $d_v$ and $1-\gamma$. Indeed, the only deviation from this distribution comes from the fact that if one knows whether some subset of points in $b_v$ are matched with points in $P_T$, this very slightly biases (by a factor of $1+O(1/n)$) the probability that a further point in $b_v$ is matched with a point in $P_T$. The probability that $v$ has exactly $d_v-t$ neighbours in $T$ is therefore $\zeta_{d_v,t}+O((\log n)^2/n^{-1/4}+\log n/n)$. Linearity of expectation then yields the desired estimates.
\end{proof}

Next, we observe that in the setting of \cref{lem:bulk-sequence}, the subgraph $G[V\setminus T]$ is uniformly random given its degree sequence.
\begin{lemma}\label{lem:rotate-bulk}
For any integers $m,k,\Delta$ and sets $V,S$, let $G\sim \mc K(V,m,k)$ and let $T$ be the set of vertices in $S$ which have at least $\Delta$ neighbours in $V$. If we condition on any outcome of $V\setminus T$ and any outcome of the degree sequence $\mbf d=(\deg_{V\setminus T}(v))_{v\in V\setminus T}$, then the conditional distribution of $G[V\setminus T]$ is uniform over graphs on the vertex set $V\setminus T$ with degree sequence $\mbf d$.
\end{lemma}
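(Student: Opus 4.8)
The plan is to run a ``rotation'' (symmetry) argument, in the same spirit as the proof of \cref{lem:rotate-core}. Fix the data we are conditioning on: an outcome $W$ of $V\setminus T$ --- equivalently, since $V$ is fixed, an outcome $T_0=V\setminus W$ of $T$, with $T_0\subseteq S$ --- together with a degree sequence $\mbf d=(d_v)_{v\in W}$. It suffices to show that any two graphs $H_1,H_2$ on the vertex set $W$ with degree sequence $\mbf d$ are equally likely to occur as $G[W]$ given that $V\setminus T=W$ and that the degree sequence of $G[W]$ is $\mbf d$. Since $G\sim\mc K(V,m,k)$ is uniform over graphs on $V$ with $m$ edges and minimum degree at least $k$, this reduces to a counting statement: writing $\mathcal G_i$ for the set of graphs $G$ on $V$ that have $m$ edges, minimum degree at least $k$, satisfy $\{v\in S:\deg_G(v)\ge\Delta\}=T_0$, and have $G[W]=H_i$, I want to show $|\mathcal G_1|=|\mathcal G_2|$. (Every $G\in\mathcal G_1\cup\mathcal G_2$ automatically has the degree sequence of $G[W]$ equal to $\mbf d$, so this is exactly the relevant family of outcomes.)

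To prove $|\mathcal G_1|=|\mathcal G_2|$ I would exhibit an explicit bijection $\mathcal G_1\to\mathcal G_2$: given $G\in\mathcal G_1$, replace the induced subgraph $G[W]=H_1$ by $H_2$, leaving every edge incident to $T_0$ (edges inside $T_0$ and edges between $T_0$ and $W$) untouched. Checking that the resulting graph $G'$ lies in $\mathcal G_2$ is routine: $G'$ has $m-e(H_1)+e(H_2)=m$ edges, because equal degree sequences force $e(H_1)=e(H_2)$; and $G'$ has the same degree sequence as $G$, since a vertex of $T_0$ is incident only to edges that were left untouched, while a vertex $v\in W$ has $d_v$ neighbours in $W$ in both $H_1$ and $H_2$ and an unchanged set of neighbours in $T_0$. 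In particular $G'$ still has minimum degree at least $k$, the set $\{v\in S:\deg_{G'}(v)\ge\Delta\}$ still equals $T_0$, and $G'[W]=H_2$, so $G'\in\mathcal G_2$. The opposite swap is clearly the inverse map, so this is a bijection and $|\mathcal G_1|=|\mathcal G_2|$.

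The only point that needs any care --- essentially the one way this differs from \cref{lem:rotate-core} --- is that $T$ is not part of the conditioning data in the simple way $V$ was there; it is itself a function of $G$ (the set of $S$-vertices of degree at least $\Delta$). So one genuinely has to verify that the swap does not move $T$, and this is precisely why it matters that $H_1$ and $H_2$ share the \emph{entire} degree sequence $\mbf d$: that makes the swap degree-preserving at every vertex of $V$, hence $T$-preserving. I expect no other obstacles.
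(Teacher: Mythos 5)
Your proposal is correct and is essentially the paper's own argument: the paper also proves this by swapping $G[V\setminus T]=H_1$ for $H_2$ and observing the two outcomes are equally likely. You simply spell out the verification (edge count, degree preservation, and hence preservation of the set $T$) that the paper leaves implicit.
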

\begin{proof}
Consider any two graphs $H_1,H_2$ on the same vertex set with the same degree sequence. For any outcome of $G$ yielding $G[V\setminus T]=H_1$, we can simply replace $G[V\setminus T]$ with $H_2$ to obtain an outcome of $G$ yielding $G[V\setminus T]=H_2$. So, $H_1$ and $H_2$ are equally likely to occur as $G[V\setminus T]$.
\end{proof}

The final fact we record in this section is that sparse random graphs with given degree sequences are typically well-approximated by Galton--Watson trees.
\begin{definition}\label{def:BS-1}
For a probability distribution $\mu$ with nonnegative integer support, let $\tilde{\mu}$ be the probability distribution defined by \[\tilde{\mu}(t)=(t+1)\mu(t+1)\big/\sum_{\ell=0}^\infty \ell \mu(\ell)\]
for positive integer $t$. A Galton--Watson tree $T_\mu$ with \emph{degree distribution} $\mu$ is a random rooted tree obtained by a Galton--Watson branching process, where the root has offspring size distribution $\mu$, and all other generations have offspring size distributions $\tilde{\mu}$. Let $T_\mu^{(r)}$ consist of the first $r$ generations of $T_\mu$ (with all vertices unlabelled except the root).

Also, for a graph $G$, let $G^{(r)}$ be the random unlabelled rooted graph obtained by fixing a uniformly random vertex $v$ as the root, and including the subgraph of all vertices within distance $r-1$ of $v$. Let $\on d^{(r)}_\mu(G)$ be the total variation distance $\on d_{\mr{TV}}(G^{(r)},T_\mu^{(r)})$ between $G^{(r)}$ and $T_\mu^{(r)}$ (if $G$ is a random graph, then $\on d^{(r)}_\mu(G)$ is a random variable).
\end{definition}

\begin{lemma}\label{lem:BS}
Let $\mu$ be a probability distribution with nonnegative integer support and fix any positive integer $r$. Let $\mbf d=(d_1,\dots,d_n)$ be a degree sequence such that each $d_v\le \log n$, and such that $|\{v:d_v=t\}|=\mu(t)n+O(n^{4/5})$ for each $t\ge 0$. Let $G$ be a uniformly random graph with degree sequence $\mbf d$. Then, whp $\on d^{(r)}_\mu(G)=o(1)$.
\end{lemma}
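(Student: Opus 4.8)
The plan is to explore the radius‑$(r-1)$ neighbourhood of a uniformly random vertex in a configuration‑model realisation of $G$, and to couple this exploration with the branching process defining $T_\mu$. First I would reduce to two cleaner settings. As in the discussion preceding \cref{lem:bulk-sequence}, a uniformly random graph with degree sequence $\mbf d$ is distributed as the multigraph $G^\ast$ of a uniformly random configuration conditioned on simplicity; since $d_v\le \log n$ the simplicity event has probability at least $e^{-O((\log n)^4)}$ (and probability $\Omega(1)$ in all our applications, where $\sum_v d_v^2=O(n)$), while $\on d^{(r)}_\mu(G^\ast)$ is a function of the configuration that changes by only $O((\log n)^r/n)$ under rerouting one matched pair, so by a bounded‑differences inequality it concentrates around its mean within any fixed $\varepsilon>0$ with failure probability $e^{-\Omega(n/(\log n)^{2r})}=e^{-\omega((\log n)^4)}$; hence it suffices to prove the statement for $G^\ast$. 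Second, writing $\mu_n$ for the empirical degree distribution of $\mbf d$ (supported on $\{0,\dots,\lfloor \log n\rfloor\}$), the hypotheses give $\on d_{\mr{TV}}(\mu_n,\mu)=o(1)$ and, since $T_\mu$ presupposes that $\mu$ has finite mean, also $\sum_t t\,\mu_n(t)=2m/n\to \sum_t t\,\mu(t)$; by the standard continuity of truncated Galton--Watson laws under this convergence $\on d_{\mr{TV}}(T^{(r)}_{\mu_n},T^{(r)}_\mu)=o(1)$, so it is enough to prove $\on d_{\mr{TV}}(G^{\ast(r)},T^{(r)}_{\mu_n})=o(1)$ whp. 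The benefit of the second reduction is that $T^{(r)}_{\mu_n}$, like $G^{\ast(r)}$, now has maximum degree at most $\log n$ and hence at most $(\log n)^{r-1}$ vertices, which keeps every union bound below polylogarithmic.

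Next I would run the exploration: pick the root uniformly and reveal, in breadth‑first order, enough matching partners to determine the radius‑$(r-1)$ ball $G^{\ast(r)}$. At every stage at most $(\log n)^r$ half‑edges have been revealed, so the probability that some revealed half‑edge is matched to another (a short cycle, or an edge inside the last layer) is $O((\log n)^{2r}/n)=o(1)$; off this event $G^{\ast(r)}$ is a tree. Conditioned on the history and on no such collision, a freshly matched half‑edge enters a not‑yet‑seen bucket of size $t$ with probability proportional to $t\cdot|\{\text{unseen }v:d_v=t\}|$; using $|\{v:d_v=t\}|=\mu(t)n+O(n^{4/5})$, that only $O((\log n)^{r-1})$ buckets have been seen, and $\sum_t t\mu(t)<\infty$, this probability is $\tilde\mu_n(t-1)$ up to an additive error that is $o((\log n)^{-r})$ once summed over the $O(\log n)$ relevant values of $t$. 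Hence the explored neighbourhood can be coupled with $T^{(r)}_{\mu_n}$ (root offspring $\mu_n$, deeper offspring $\tilde\mu_n$, truncated at generation $r$) so that they coincide off an event of probability $o(1)$ — the collision probability plus at most $(\log n)^{r-1}$ per‑vertex coupling errors. Taking expectations, this yields the \emph{annealed} statement that for every fixed rooted graph $H$ the expected fraction of vertices of $G^\ast$ whose radius‑$(r-1)$ ball is isomorphic to $H$ equals $\Pr(T^{(r)}_{\mu_n}\cong H)+o(1)$.

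To upgrade this to a whp statement: for each fixed $H$, the number of vertices of $G^\ast$ with ball isomorphic to $H$ also changes by $O((\log n)^r)$ under rerouting one matched pair, so by bounded differences it equals $(\Pr(T^{(r)}_{\mu_n}\cong H)+o(1))n$ whp. Fix a large constant $K$: there are finitely many rooted graphs $H$ on at most $K$ vertices, so whp all of their counts are simultaneously as predicted, and whp the fraction of vertices of $G^\ast$ with a ball of more than $K$ vertices is at most $\Pr(|T^{(r)}_{\mu_n}|>K)+o(1)$; summing over the small $H$ and bounding the leftover mass on both sides gives $\on d_{\mr{TV}}(G^{\ast(r)},T^{(r)}_{\mu_n})\le o(1)+\Pr(|T^{(r)}_{\mu_n}|>K)$ whp, and letting $K=K(n)\to\infty$ slowly (using that $T^{(r)}_{\mu_n}$ is finite a.s.) completes the proof for $G^\ast$; the two reductions then transfer it to $G$.

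I expect no deep obstacle here — this is essentially the standard local‑weak‑convergence computation for the configuration model — but two points require care. The more laborious is the error accounting in the exploration: one must check that the roughly $(\log n)^{r-1}$ per‑vertex coupling discrepancies (from bucket depletion, the $O(n^{4/5})$ slack in the degree profile, and the conditioning on no collisions) really do sum to $o(1)$, and the $d_v\le\log n$ hypothesis is exactly what makes this work. The more subtle is the comparison $T^{(r)}_{\mu_n}\approx T^{(r)}_\mu$ when $\mu$ has heavy tails (infinite variance is permitted by the hypotheses): then the generation sizes of $T_\mu$ are not polylogarithmically bounded and a naive union bound over its vertices fails, so one first conditions on the (almost surely finite) truncated tree not being too large, with a cutoff that grows slowly enough in $n$ that the generation‑by‑generation union bounds still close.
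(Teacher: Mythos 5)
Your proposal is correct and follows essentially the same route as the paper's proof: realise $G$ via the configuration model, explore radius-$(r-1)$ neighbourhoods and couple the exploration with the Galton--Watson tree to get the annealed estimate, then upgrade to a whp statement by bounded differences under transpositions of the random configuration. Your extra reductions (passing through the empirical distribution $\mu_n$, and the more careful treatment of the simplicity probability and of possible heavy tails of $\mu$) are refinements of details the paper leaves implicit, not a different argument.
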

\begin{proof}
Consider a uniformly random configuration with degree sequence $\mbf d$ (obtained via a random permutation $\sigma$, as in \cref{lem:bulk-sequence}). Note that for $t\ge 1$, the number of points in buckets of size $t$ is $\tilde{\mu}(t-1) n+O(n^{4/5}\log n)$.

Let $G^*$ be the multigraph obtained by contracting our random configuration. For a vertex $v$, let $G^*(v)$ be the rooted graph consisting of all vertices within distance $r-1$ of $v$. For each possible outcome $H$ of $G^*(v)$ which is a tree (there are at most $(\log n)^r$ such outcomes), let $X_H$ be the number of vertices $v$ for which $G^*(v)=T$. Also, let $X_0$ be the number of vertices $v$ for which $G^*(v)$ is not a tree. Modifying $\sigma$ by a transposition changes $X_0$ and each $X_H$ by at most $4(\log n)^k=n^{o(1)}$, so by a concentration inequality (see for example \cite[Eq.~(29)]{McD98}), whp $X_0=\E X_H+O(n^{5/6})$ and $X_H=\E X_H+O(n^{5/6})$ for each $H$. Now, to finish the proof we claim that $\E X_0=O(n^{4/5}\log n+(\log n)^r/n)$ and $\E X_H=\Pr(T^{(r)}_\mu=H)+O(n^{4/5}\log n+(\log n)^r/n)$ for all $H$. This follows from a routine computation in the configuration model (choose a random vertex, and iteratively explore the neighbourhood of that vertex to a depth of $r$, at each step exposing a new matched pair in our random configuration).
\end{proof}

\section{Initial rank estimate}\label{sec:rank}

In this section we apply the machinery of Bordenave, Lelarge and Salez~\cite{BLS11}.

\begin{definition}\label{def:BS-2}
Say that a sequence of random graphs $G_1,G_2,\dots$ is \emph{$\mu$-convergent} if, in the notation of \cref{def:BS-1}, the sequence of random variables $\on d^{(r)}_\mu(G_1)$,$\on d^{(r)}_\mu(G_2),\dots$ converges to zero in probability, for every fixed $r$. For a function $f:\mb Z_{\ge 0}\to \RR_{\ge 0}$ with $\sum_{i=0}^\infty f(i)<\infty$,  let $\varphi_f=[0,1]\to \RR$ be the function $\sum_{i=0}^\infty f(i)x^i$, and let $M_f:[0,1]\to \RR$ be the function defined by
\[M_f(x) = x\varphi_f'(1-x) + \varphi_f(1-x) + \varphi_f(1-\varphi_f'(1-x)/\varphi_f'(1)) - 1.\]
\end{definition}

\begin{theorem}[{See \cite[Theorem~13 and Eq.~(19)]{BLS11}}]\label{thm:BLS}
Let $\mu$ be a probability distribution with nonnegative integer support and finite second moment, and let $G_1,G_2,\dots$ be a $\mu$-convergent sequence of graphs with adjacency matrices $A_1,A_2,\dots$. Then
\[\limsup_{n\to\infty}\frac{1}{n}\mb{E}[\corank A_n]\le\max_{x\in[0,1]}M_{\mu}(x).\]
\end{theorem}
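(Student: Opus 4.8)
The plan is to deduce the statement directly from the work of Bordenave, Lelarge and Salez \cite{BLS11}: it is exactly their Theorem~13 together with the evaluation in their Eq.~(19), so the only thing to verify is that $\mu$-convergence and finiteness of the second moment of $\mu$ are precisely the hypotheses under which their argument runs. For orientation, let me describe the three ingredients one would assemble. Write $\nu_n=\frac1n\sum_i\delta_{\lambda_i(A_n)}$ for the empirical spectral distribution of $A_n$, so that $\frac1n\corank A_n=\nu_n(\{0\})$, and let $\mu_\star$ be the deterministic probability measure with $\mu_\star(B)=\E[\langle\delta_\rho,\one_B(A_{T_\mu})\delta_\rho\rangle]$ for Borel $B$, i.e.\ the expected spectral measure of the Galton--Watson tree $T_\mu$ seen from its root $\rho$.

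First I would establish spectral convergence: under $\mu$-convergence, $\nu_n\Rightarrow\mu_\star$ weakly in probability. Since $\mu$ is only assumed to have a finite second moment, the degrees of $G_n$ are unbounded and the method of moments is unavailable, so one works instead with the Stieltjes transforms $m_n(z)=\frac1n\operatorname{tr}(A_n-z)^{-1}$ for $\operatorname{Im}z>0$. The crux is a locality statement: a diagonal resolvent entry $(A_{G_n}-z)^{-1}_{vv}$ is, up to an error that is small in expectation, a functional of the rooted neighbourhood of $v$ only -- this comes from the Schur-complement recursion for resolvent entries along an exploration from $v$, exploiting the contraction properties of resolvents on the upper half-plane. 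It follows that $\E\,m_n(z)\to\E[(A_{T_\mu}-z)^{-1}_{\rho\rho}]=:m(z)$, with a concentration argument upgrading this to convergence in probability, and $\mu_\star$ is the measure with Stieltjes transform $m$. I would then extract the atom at $0$ softly: for each $\eta>0$ the elementary identity $\operatorname{Re}[(-i\eta)m_n(i\eta)]=\eta^2\int(x^2+\eta^2)^{-1}d\nu_n(x)$ gives $\nu_n(\{0\})\le\operatorname{Re}[(-i\eta)m_n(i\eta)]\le1$, so bounded convergence together with $m_n(i\eta)\to m(i\eta)$ in probability yields $\limsup_n\frac1n\E[\corank A_n]\le\operatorname{Re}[(-i\eta)m(i\eta)]$ for every $\eta>0$; letting $\eta\downarrow0$ and using $\operatorname{Re}[(-i\eta)m(i\eta)]=\eta^2\int(x^2+\eta^2)^{-1}d\mu_\star(x)\to\mu_\star(\{0\})$ gives $\limsup_n\frac1n\E[\corank A_n]\le\mu_\star(\{0\})$.

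The hard part is the third ingredient: bounding $\mu_\star(\{0\})\le\max_{x\in[0,1]}M_\mu(x)$. Here one uses that the diagonal resolvent entry of $T_\mu$ satisfies a recursive distributional equation obtained by deleting the root (a fixed-point equation for the law of a resolvent entry, driven by the offspring laws $\mu$ and $\tilde\mu$), and analyses its solutions as $\operatorname{Im}z\downarrow0$; the limiting mass at $0$ comes out in the closed variational form of \cref{def:BS-2}. Equivalently, via the objective method, $1-\mu_\star(\{0\})$ equals twice the density of a maximum matching of $T_\mu$ -- on trees the matching number equals the rank, by K\"onig's theorem -- which one evaluates through the corresponding branching recursion, and the generating-function bookkeeping produces exactly $M_\mu$. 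This computation, together with the locality-of-resolvents input under only a second-moment degree assumption, are the two delicate points; both are carried out in \cite{BLS11}, which we invoke, so no new work is needed beyond checking that the setup matches.
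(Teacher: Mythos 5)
The paper gives no proof of this statement at all: it is imported verbatim as an external result, cited to Theorem~13 and Eq.~(19) of \cite{BLS11}, and your proposal ultimately does the same thing (invoke \cite{BLS11}), so it is essentially the same approach. Your accompanying sketch of the Stieltjes-transform/local-weak-convergence argument, the extraction of the atom at $0$, and the matching/recursive-distributional-equation evaluation of $\mu_\star(\{0\})$ is a faithful summary of what happens inside \cite{BLS11}, but none of it is reproduced or needed in this paper.
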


Using \cref{thm:BLS} together with \cref{lem:core-degree-sequence,lem:BS}, one can prove that if $k\ge 3$ and $|V|,m=\Theta(n)$ then a random graph $G\sim \mc K(V,m,k)$ has corank $o(n)$ whp. We will need a variation on this fact, in which some high-degree vertices are first removed.

\begin{lemma}\label{lem:rank}
Fix  $k\ge 3$ and $\varepsilon,\alpha,\Delta > 0$, such that $\alpha$ is sufficiently small with respect to $\varepsilon$ and $\Delta$ is sufficiently large with respect to $\alpha$. Consider sets $S\subseteq V$ and an integer $m$ such that $|V|=\Theta(n)$ and $\varepsilon n\le m-k|V|/2\le n/\varepsilon$ and $|S|=\floor{\vphantom{x^{x^x}}\alpha |V|}$, and consider a random graph $G\sim \mc K(V,m,k)$. Let $T=\{v\in S:\deg_G(v)\ge \Delta\}$ be the set of vertices in $S$ with degree at least $\Delta$. Then, whp the corank of the adjacency matrix of $G[V\setminus T]$ is at most $|T|/8$.
\end{lemma}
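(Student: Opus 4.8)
The plan is to pass from $\mc K(V,m,k)$ to a statement about uniformly random graphs with a prescribed degree sequence, apply the Bordenave--Lelarge--Salez machinery (\cref{thm:BLS}) to get an initial bound on $\corank A[V\setminus T]$ in terms of the degree distribution of $G[V\setminus T]$, and then carry out an analytic estimate showing this bound is much smaller than $|T|$.

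\emph{Reduction to a degree sequence.} Condition on the set $V\setminus T$ and on the degree sequence $\mbf d=(\deg_{V\setminus T}(v))_{v\in V\setminus T}$ of $G[V\setminus T]$; by \cref{lem:rotate-bulk} the conditional law of $G[V\setminus T]$ is then uniform among graphs on $V\setminus T$ with degree sequence $\mbf d$. By \cref{lem:core-degree-sequence}(2) and \cref{lem:bulk-sequence}, whp $|T|=\beta n+O(n^{3/4})$, every entry of $\mbf d$ is at most $\log n$, and $|\{v\in V\setminus T:d_v=t\}|=(\delta_t+\delta_t')n+O(n^{4/5})$ for every $t\ge 0$ (using the notation of \cref{def:CW,def:CW-mod}). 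Write $\nu(t)=(\delta_t+\delta_t')/(1-\beta)$ for the associated probability distribution on $\mb{Z}_{\ge 0}$; its mean $\varphi_\nu'(1)=\tfrac1{1-\beta}\sum_t t(\delta_t+\delta_t')$ is at least $k-o(1)\ge 2$, and since $\zeta_{j,t}=\Pr[\mr{Bin}(j,1-\gamma)=t]$ one gets the clean identity $\varphi_\nu(z)=\tfrac1{1-\beta}(\varphi_\rho(w)-\alpha\sum_{j\ge\Delta}\rho_j w^j)$ with $w=\gamma+(1-\gamma)z$, where $\varphi_\rho(z)=\sum_{t\ge k}\rho_t z^t$ is the generating function of the truncated-Poisson $k$-core degree distribution.

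\emph{Initial corank bound.} By a routine compactness argument it suffices to work along a subsequence of $n$ for which $|V|/n$ and $2m/|V|$ converge; then $\lambda$ and all quantities of \cref{def:CW,def:CW-mod} converge, so $\nu\to\nu_\infty$ (a fixed probability distribution with finite second moment), $\rho\to\rho_\infty$, $\beta\to\beta_\infty$, $\gamma\to\gamma_\infty$. Applying \cref{lem:BS} with $\mu=\nu_\infty$ (after thinning the subsequence so that $\nu$ and $\nu_\infty$ agree to within $n^{-1/5}$ on all degrees $\le\log n$), the graphs $G[V\setminus T]$ form a $\nu_\infty$-convergent sequence, so \cref{thm:BLS} gives $\limsup_n\tfrac1n\E[\corank A[V\setminus T]]\le\max_{x\in[0,1]}M_{\nu_\infty}(x)$. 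Since a single edge-switching perturbs the adjacency matrix by rank $O(1)$, hence changes $\corank$ by $O(1)$, standard concentration arguments for the configuration model (cf.\ the proof of \cref{lem:bulk-sequence}) show $\corank A[V\setminus T]=\E[\corank A[V\setminus T]]+o(n)$ whp. Combining, whp $\corank A[V\setminus T]\le(\max_xM_{\nu_\infty}(x)+o(1))n$, and as $|T|=(\beta_\infty+o(1))n$ it remains to prove
\[\max_{x\in[0,1]}M_{\nu_\infty}(x)<\beta_\infty/8.\]

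\emph{The analytic estimate (the main obstacle).} For the pure $k$-core distribution $\rho_\infty$ one has $\max_xM_{\rho_\infty}(x)=0$ (this is precisely the ingredient needed to show $\mc K(V,m,k)$ has corank $o(n)$); one moreover checks that, because $\rho_\infty$ is supported on $\{t\ge k\}$ with $k\ge 3$, this maximum is attained only at $x=0,1$, where $M_{\rho_\infty}(0)=M_{\rho_\infty}'(0)=0$ and $M_{\rho_\infty}''(0)=-\varphi_{\rho_\infty}''(1)<0$ (similarly at $x=1$), so the maxima are nondegenerate. From the identity for $\varphi_\nu$ above, $\varphi_{\nu_\infty}$ and $\varphi_{\nu_\infty}'$ lie within $O(\gamma_\infty)$ of $\varphi_{\rho_\infty},\varphi_{\rho_\infty}'$ uniformly on $[0,1]$, which already forces $M_{\nu_\infty}(x)<0$ for $x$ a fixed distance from $\{0,1\}$; but this crude bound alone is insufficient, since $\gamma_\infty$ exceeds $\beta_\infty$ by a factor $\Theta(\Delta)$. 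The extra input is that near the endpoints one does far better: a vertex of original degree $\ge k$ falls below degree $k$ only after losing an edge to $T$, so $\nu_\infty$ places only $O(\gamma_\infty^{\,k-j})$ mass on degree $j$ for $0\le j<k$ (in particular $O(\gamma_\infty^{\,k})$ on degree $0$), and correspondingly $\varphi_{\nu_\infty}^{(i)}$ agrees with $\varphi_{\rho_\infty}^{(i)}$ to much higher order near $z=0$, with a matching estimate near $z=1$ from $\varphi_{\nu_\infty}(1)=1$ and $\varphi_{\nu_\infty}'(1)=(1-\gamma_\infty)^2\varphi_{\rho_\infty}'(1)/(1-\beta_\infty)$. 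Inserting these into the explicit formula for $M$ and Taylor-expanding about the nondegenerate maxima at $x=0,1$ gives $\max_xM_{\nu_\infty}(x)=O(\gamma_\infty^{\,k})$. Finally, expressing $\gamma_\infty$ and $\beta_\infty$ through the tail of the truncated Poisson, $\gamma_\infty^{\,k}/\beta_\infty\to 0$ as $\Delta\to\infty$ with $\alpha$ (and $\varepsilon$) fixed, so for $\Delta$ large enough this is below $\beta_\infty/8$. The endpoint analysis of $M_{\nu_\infty}$ is the crux; the reduction and convergence steps are bookkeeping.
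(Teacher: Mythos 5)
Your reduction to a fixed degree sequence via \cref{lem:rotate-bulk}, the application of \cref{lem:bulk-sequence}, \cref{lem:BS} and \cref{thm:BLS}, and the transfer from expectation to a whp statement by switching-based concentration all match the paper's proof of \cref{lem:rank} essentially verbatim (including the identity $\varphi_\nu(z)=\frac{1}{1-\beta}\bigl(\varphi_\rho(w)-\alpha\sum_{j\ge\Delta}\rho_jw^j\bigr)$ with $w=\gamma+(1-\gamma)z$, which is correct). The divergence is entirely in the analytic estimate $\max_x M(x)\le\beta/8$, and that is where your argument has a genuine gap: you ground everything on the claim that $M_{\rho_\infty}\le 0$ on $[0,1]$ with the maximum attained \emph{only} at $x=0,1$, dismissing it with ``one moreover checks.'' This global statement is not a routine check --- it is the entire analytic content of the lemma. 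The only mechanism available (and the one the paper uses, via the remark after \cite[Theorem~13]{BLS11}) for controlling $M_f$ \emph{globally} is to show that $\varphi_f''$ is log-concave on $[0,1]$, which forces the global maximum to sit at the first local extremum $x=0$; verifying that log-concavity is the delicate part (Step~2 of the proof of \cref{lem:rank-analysis} does it by an iterated-antiderivative trick applied to a carefully chosen comparison function). Your endpoint computations ($M'(0)=\frac{\varphi''(1)}{\varphi'(1)}\varphi'(0)$, $M''(0)=-\varphi''(1)+O(\gamma^{k-2})<0$, the $O(\gamma^{k-t})$ masses on degrees $t<k$, and $\gamma^k=o(\beta)$ via H\"older) are all correct local facts, but they say nothing about the interior of $[0,1]$, where $M_{\nu_\infty}$ could a priori have a positive local maximum of size $\Theta(\gamma)\gg\beta$.

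It is also worth contrasting the two routes on the assumption that the global fact were supplied. The paper avoids any endpoint Taylor expansion by constructing a bespoke comparison function $\nu$ (with $\nu(0)=\nu(1)=0$, not a probability distribution, and different from your $\nu$) satisfying $|M_\mu-M_\nu|=O(\gamma^2)=o(\beta)$ \emph{uniformly}, and then proving log-concavity of $\varphi_\nu''$ directly, so that $M_\mu(x)\le M_\nu(0)+\beta/32\le M_\mu(0)+\beta/16=\beta/16$. Your scheme perturbs around $\rho_\infty$ itself, where the uniform error is only $O(\gamma)$ (too large, as you note), and compensates with a two-scale analysis: quantitative negativity $M_{\rho_\infty}(x)\lesssim-\min(x,1-x)^2$ in the bulk plus a Taylor expansion on $O(\sqrt\gamma)$-neighbourhoods of the endpoints. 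That can be made to work, but it needs both the (unproved) global negativity with a quantitative rate uniform over the compact family of $\lambda$'s allowed by $\varepsilon$, and third-derivative control of $M_{\nu_\infty}$ near the endpoints; the paper's $O(\gamma^2)$-approximation sidesteps all of this. As written, the proposal is incomplete at its most important step.
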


\begin{remark}\label{rem:np-infty}
Essentially all parts of our proof of \cref{thm:main} straighforwardly generalise to the setting where $np$ slowly tends to infinity (we only need to consider the regime where say $np\le 2\log n$; if $np$ grows faster than this, then it is well-known that whp $\core_k(G)=G$ is nonsingular). The exception is \cref{lem:rank}, since it is proved using \cref{thm:BLS} (which can only ever apply to sequences of graphs for which the number of edges is proportional to the number of vertices). It may be possible to quantify the methods of Bordenave, Lelarge and Salez, to obtain some version of \cref{thm:BLS} that applies when $n=o(m)$, but this is actually not necessary, due to various simplifications that can be made in the regime $np\to\infty$.

We (very) briefly sketch how to adapt and simplify our proof of \cref{thm:main}: if $np \to \infty$ then typically there are only $o(n)$ vertices outside of $\core_k(G)$, and (by estimating the expected size of the kernel of the adjacency matrix of $G$, interpreted as a matrix over $\mb F_2$; see \cite{Fer20}), it is not difficult to show that $\corank G=o(n)$ whp. So, we can simply fix a set $T$ of $\beta n$ vertices, for some small $\beta$ (without worrying about the set $S$). Since $G[V\setminus T]$ is an Erd\H os--R\'enyi random graph on its vertex set, whp $\core_k(G)-T$ has corank $o(n)$, and it is also easy to see that whp all but $o(n)$ vertices in $T$ have $(1-\beta+o(1))np\to\infty$ neighbours in $\core_k(G)-T$. Then we finish with a rank-boosting argument similar to one in \cref{sec:finishing}.

There are various other minor changes that would have to be made in various parts of this paper to handle the case $np\to \infty$ (for example, we would need a slightly more involved analysis for the unstructured kernel property than is currently in \cref{lem:UKP}). However, since the analogue of \cref{thm:main} in the regime $np\to\infty$ was recently proved by Glasgow~\cite{Gla21} (for $k=3$, but the extension to all $k\ge 3$ is immediate), we do not elaborate further on these changes.
\end{remark}

The proof of \cref{lem:rank} basically amounts to analysis of the function $M_\mu$, for an appropriate distribution $\mu$. Indeed, we will be able to prove \cref{lem:rank} via a very short deduction from the following lemma.

\begin{lemma}\label{lem:rank-analysis}
Fix $\Delta\ge k\ge 3$ and $0<\varepsilon,\alpha <1/2$, such that $\Delta$ is sufficiently large with respect to $k,\varepsilon,\alpha$. Suppose $\varepsilon n \le m-kn/2\le n/\varepsilon$. For $t\in \mb Z_{\ge 0}$, let $\mu(t)=(\delta_t+\delta_t')/(1-\beta)$, in the notation of \cref{def:CW-mod}. Then $\sup_{x\in[0,1]}M_\mu(x)\le\beta/16$.
\end{lemma}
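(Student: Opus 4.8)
The plan is to reduce everything to the corresponding function $M_\rho$ for the truncated–Poisson distribution $\rho$ of \cref{def:CW}, for which $\max_{[0,1]}M_\rho=0$ is already implicit in the ``$\corank=o(n)$'' fact noted after \cref{thm:BLS}. Write $\Lambda=2m/n=\varphi_\rho'(1)$. The first step is the generating–function identity behind \cref{def:CW-mod} (it just records that in the configuration model each half-edge lands in $T$ roughly independently with probability $\gamma$):
\[(1-\beta)\varphi_\mu(x)=\varphi_\rho\big(\gamma+(1-\gamma)x\big)-\sigma(x),\qquad \sigma(x):=\alpha\textstyle\sum_{j\ge\Delta}\rho_j\big(\gamma+(1-\gamma)x\big)^j.\]
Thus $\sigma\ge 0$ is increasing and convex with $\sigma(1)=\beta$, $\sigma'(1)=(1-\gamma)\gamma\Lambda$, $\sigma''(1)=O(\Delta^2\beta)$, and $\varphi_\mu'(1)=\tfrac{(1-\gamma)^2}{1-\beta}\Lambda$; moreover, since $\rho$ is truncated Poisson, $\gamma$ and $\beta$ are super-exponentially small in $\Delta$ with $\gamma=\Theta_{k,\varepsilon,\alpha}(\Delta\beta)$, so in particular $\gamma^k=o(\beta)$, $\Delta^2\beta=o(1)$, and $\sigma(z),\sigma'(z)$ decay super-exponentially in $\Delta$ for $z$ bounded away from $1$. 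Substituting the identity into the definition of $M_\mu$ and comparing with $M_\rho$ at $(1-\gamma)x$, the leading terms cancel and one obtains, with $y:=1-(1-\gamma)x$, $w:=1-\varphi_\mu'(1-x)/\varphi_\mu'(1)$ and $\tau:=\sigma'(1-x)/((1-\gamma)\Lambda)\ge 0$,
\[(1-\beta)M_\mu(x)=M_\rho\big((1-\gamma)x\big)+\beta-\sigma(1-x)-\sigma(w)-x\sigma'(1-x)+E(x),\]
where $E(x)=\varphi_\rho\big(1-\tfrac{\varphi_\rho'(y)}{\Lambda}+\tau\big)-\varphi_\rho\big(1-\tfrac{\varphi_\rho'(y)}{\Lambda}\big)$ satisfies $0\le E(x)\le\tau\,\varphi_\rho'\big(1-\tfrac{\varphi_\rho'(y)}{\Lambda}+\tau\big)$ by monotonicity of $\varphi_\rho'$. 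The alternating signs of the $\sigma$-terms are exactly what makes the estimate work.

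Next I would fix a small constant $\delta=\delta(k,\varepsilon)>0$ and split $[0,1]$ into the bulk $[\delta,1-\delta]$ and the two endpoint windows $[0,\delta]$ and $[1-\delta,1]$. On the bulk, $1-x$ and $w$ are bounded away from $1$, so all $\sigma$-terms are super-exponentially small and $E(x)\le\tau\Lambda=O(\gamma)$; hence $(1-\beta)M_\mu(x)\le M_\rho((1-\gamma)x)+\beta+O(\gamma)$. Here one needs the quantitative strengthening that $M_\rho$ is \emph{strictly} negative on compact subsets of $(0,1)$, uniformly over the compact range $2m/n\in[k+2\varepsilon,k+2/\varepsilon]$ of parameters; this upgrades $\max M_\rho=0$ and, with continuity and compactness, gives $c_1=c_1(\delta,k,\varepsilon)>0$ with $M_\rho\le-c_1$ on $[\delta/2,1-\delta/2]$, so $(1-\beta)M_\mu(x)\le-c_1+\beta+O(\gamma)<0$ for $\Delta$ large.

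The endpoint windows are the delicate part, since there the margin $-M_\rho$ degenerates and the crude bulk bound leaves an unwanted $+\beta$. For $x\in[0,\delta]$: convexity of $\sigma$ gives $\beta-\sigma(1-x)-x\sigma'(1-x)=\int\!\!\int\sigma''\le\tfrac12\sigma''(1)x^2=O(\Delta^2\beta\,x^2)$; since $k\ge 3$, $\varphi_\rho'(z)=O(z^{k-1})$ near $0$ and $1-\varphi_\rho'(y)/\Lambda=O(x)$, so $E(x)=O\big(\gamma(x+\gamma)^{k-1}\big)$; and $M_\rho((1-\gamma)x)\le-c_0x^2$ from the Taylor expansion $M_\rho(x')=-\tfrac12\varphi_\rho''(1)x'^2+O(x'^3)$ at $0$ (leading coefficient negative, uniformly in $\lambda$). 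Dropping the harmless $-\sigma(w)\le 0$ and maximising over $x\in[0,\delta]$ gives $(1-\beta)M_\mu(x)=O(\gamma^k)$. For $x=1-v\in[1-\delta,1]$: now $w$ is within $O(\gamma^{k-1}+v^{k-1})$ of $1$, so $\sigma(w)\ge\beta-O\big((\gamma^{k-1}+v^{k-1})\gamma\big)$ and hence $\beta-\sigma(1-x)-\sigma(w)=O(\gamma^k+v^{k-1}\gamma)$; using the expansion $M_\rho(1-u)=-(k-1)\rho_k u^k+O(u^{k+1})$ at $1$ with $u=\gamma+(1-\gamma)v\ge\max(\gamma,v/2)$, and absorbing $v^{k-1}\gamma$ by Young's inequality, again gives $(1-\beta)M_\mu(x)=O(\gamma^k)$. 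Since $\gamma^k=o(\beta)$, in all three regimes $M_\mu(x)\le\beta/16$ once $\Delta$ is large enough in terms of $k,\varepsilon,\alpha$. (The constant $16$ is not optimised; the slack is consumed in the deduction of \cref{lem:rank}.)

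I expect the main obstacle to be precisely the endpoint analysis: one cannot treat $M_\mu$ as a perturbation of $M_\rho$ there because $-M_\rho$ vanishes (quadratically at $0$, to order $(1-x)^k$ at $1$), so one must see that the $\sigma$-corrections cancel the stray $+\beta$ to the right order, which needs both the precise endpoint asymptotics of $M_\rho$ and careful bookkeeping of the super-exponentially small quantities $\gamma,\beta,\sigma,\sigma',\sigma''$. A second ingredient, genuinely required for the bulk, is the strict negativity of $M_\rho$ on $(0,1)$; this is a mild strengthening of the already-used $\max M_\rho=0$ and can be obtained by a direct examination of $x\mapsto M_\rho(x)$ built from $\varphi_\rho(x)=Z_k(\lambda x)/Z_k(\lambda)$, ruling out interior critical points.
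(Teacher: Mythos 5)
Your route is genuinely different from the paper's, and its skeleton is sound. Your generating-function identity $(1-\beta)\varphi_\mu(x)=\varphi_\rho(\gamma+(1-\gamma)x)-\sigma(x)$ is correct (it follows directly from summing the binomial expressions $\zeta_{j,t}$ in \cref{def:CW-mod}), the resulting exact decomposition of $(1-\beta)M_\mu(x)$ in terms of $M_\rho((1-\gamma)x)$ and the $\sigma$-corrections checks out, and the endpoint expansions you invoke ($M_\rho(x)=-\tfrac12\varphi_\rho''(1)x^2+O(x^3)$ near $0$, using that $\varphi_\rho$ vanishes to order $k\ge3$ at the origin, and $M_\rho(1-u)=-(k-1)\rho_k u^k+O(u^{k+1})$ near $1$) are right, as is the key cancellation $\beta-\sigma(w)=O(\gamma u^{k-1})$ near $x=1$. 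The paper does something quite different: it introduces an explicit auxiliary function $\nu$ approximating $\mu$, shows $|\mu(t)-\nu(t)|=O(\gamma^2\lambda^t/t!)$ and hence $\sup_{[0,1]}|M_\mu-M_\nu|=O(\gamma^2)=o(\beta)$, and then verifies by an elementary chain of reductions (antiderivatives of log-concave functions, an ``add $1$'' trick, reduction to $e^x$) that $\varphi_\nu''$ is log-concave, so that the criterion accompanying \cref{thm:BLS} places the maximum of $M_\nu$ at $x=0$ in one stroke. That handles the entire interval uniformly and requires no bulk/endpoint split and no expansion of $M_\rho$; conversely, your approach avoids having to establish log-concavity for the perturbed generating function.

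The one substantive caveat is your bulk input: strict negativity of $M_\rho$ on compact subsets of $(0,1)$, uniformly over $\lambda$ in the compact range determined by $\varepsilon$ and $k$. This is strictly stronger than the $\max_{[0,1]}M_\rho=0$ fact you cite (which says the global maximum is attained at $x=0$, not that it is attained \emph{only} there), it is nowhere needed in the paper's argument, and it is load-bearing for you: without it the bulk bound $(1-\beta)M_\mu(x)\le M_\rho((1-\gamma)x)+\beta+O(\gamma)$ leaves a stray $+\beta$ you cannot afford. The claim is true for the truncated Poisson with $k\ge3$ and can be extracted from the Bordenave--Lelarge--Salez fixed-point analysis together with log-concavity of $Z_{k-2}$, but ``ruling out interior critical points by direct examination'' is not a one-liner, and carrying it out would essentially import the same log-concavity machinery that the paper applies to $\nu$ directly. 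As written, this is the one genuinely unproven step in an otherwise viable alternative proof.
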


\begin{proof}[Proof of \cref{lem:rank} given \cref{lem:rank-analysis}]
First note that it suffices to consider the case where $V=\{1,\dots,n\}$. Also, note that $\mu$ defines a probability distribution (one can directly compute that $\sum_{t=0}^\infty\mu(t)=1$, or deduce this indirectly from \cref{lem:bulk-sequence}). Let $A$ be the adjacency matrix of $G[V\setminus T]$. By part (1) of \cref{lem:bulk-sequence}, we have $|T|=\beta n+O(n^{3/4})$ whp. By part (2) of \cref{lem:core-degree-sequence} and parts (2) and (3) of \cref{lem:bulk-sequence}, together with \cref{lem:rotate-bulk,lem:BS}, we see that $G[V,T]$ is $\mu$-convergent, so $\E \corank A\le (\beta/16)n+o(n)$ by \cref{thm:BLS}.

Now, we finish the proof by noting that $\corank A$ is tightly concentrated: indeed, if we condition on the degree sequence $\mbf d$ of $G[V\setminus T]$, then \cref{lem:rotate-bulk} tells us that $G[V\setminus T]$ is uniform over graphs with degree sequence $\mbf d$. Consider a random configuration with degree sequence $\mbf d$, defined in terms of a random permutation $\sigma$ (as in the proof of \cref{lem:bulk-sequence}), and let $A^*$ be the adjacency matrix of the corresponding contracted multigraph. Then, modifying $\sigma$ by a transposition changes at most 4 rows of $A^*$, so changes $\corank A^*$ by at most 4. Therefore, by a concentration inequality (see for example \cite[Eq.~(29)]{McD98}), we have $\corank A^*=\E \corank A^*+o(n)$ whp and therefore $\corank A=\E \corank A^*+o(n)$ whp, since $G^*$ is simple with probability $\Omega(1)$. Since $\corank A^*$ always lies between $0$ and $n$, it follows that $\E \corank A=\E \corank A^*+o(n)$, and therefore $\corank A\le |T|/8$ whp.
\end{proof}

We will spend the rest of this section proving \cref{lem:rank-analysis}.
\begin{proof}[Proof of \cref{lem:rank-analysis}]
First, recall the definitions of $\lambda,Z_k(\lambda),\gamma$ from \cref{def:CW,def:CW-mod}, and define $\nu\colon\mb{Z}_{\ge 0}\to\mb{R}_{\ge 0}$ by
\[\nu(t) = \frac{(1-\gamma)^t\lambda^t}{(1-\beta)Z_k(\lambda)t!}((\mbm{1}_{t\ge k}-\alpha\mbm{1}_{t\ge\Delta}) + \gamma\lambda(\mbm{1}_{t+1\ge k}-\alpha\mbm{1}_{t+1\ge\Delta})).\]
This does not define a probability distribution, but we do have $\sum_{i=0}^\infty i^t\nu(i)<\infty$ for all $t\in \NN$. The idea is that $\nu$ is an approximation of $\mu$ that is more tractable to analyse.

For any $f:\mb Z_{\ge 0}\to \RR_{\ge 0}$ satisfying $\sum_{i=0}^\infty i^2 f(i)<\infty$ and $f(0) = f(1) = 0$, we can differentiate term-by-term to compute
\[M_f'(0) = \frac{\varphi_f''(1)}{\varphi_f'(1)}\varphi_f'(0) = 0\]
By \cite[Theorem~13]{BLS11} and the remarks following, if $\varphi_f''(x)$ is log-concave on $[0,1]$ then the global maximum of $M_f$ over $[0,1]$ is the first local extremum, namely $x = 0$.

The remainder of the proof proceeds in two steps. In one step, we show that $\varphi_{\nu}''(x)$ is log-concave, hence $M_{\nu}(x)\le M_{\nu}(0)$. In the other step, we show that $\nu$ and $\mu$ are ``close'' in the sense that
\begin{equation}\label{eq:M-transfer}
\sup_{x\in[0,1]}|M_\mu(x)-M_{\nu}(x)|\le\beta/32.
\end{equation}
Thus, for all $x\in[0,1]$, it will follow that
\[M_\mu(x)\le M_{\nu}(x)+\beta/32\le M_{\nu}(0)+\beta/32\le M_\mu(0)+\beta/16 = \beta/16\]
as desired. Here we used the fact that $M_\mu(0) = \varphi_\mu(1)-1 = 0$.
\medskip

\noindent \textbf{Step 1: Small difference.}
We first show \cref{eq:M-transfer}. Recall from \cref{def:CW-mod} that $\zeta_{j,t}$ is the probability that a $\on{Binomial}(j,1-\gamma)$ random variable is equal to $t$ (so $\zeta_{j,t}=0$ if $j<t$), write $x\lor y=\max(x,y)$, and observe that
\begin{align*}
(1-\beta)Z_k(\lambda)(\mu(t)-\nu(t))&=\left(\sum_{j=k\lor t}^\infty\frac{\lambda^j}{j!}\zeta_{j,t}-\alpha\sum_{j=\Delta\lor t}^\infty\frac{\lambda^j}{j!}\zeta_{j,t}\right)\\
&\qquad\qquad-\left((\mbm{1}_{t\ge k}-\alpha\mbm{1}_{t\ge\Delta})\zeta_{t,t}\frac{\lambda^t}{t!} + (\mbm{1}_{t+1\ge k}-\alpha\mbm{1}_{t+1\ge\Delta})\zeta_{t+1,t}\frac{\lambda^{t+1}}{(t+1)!}\right)\\
&= \sum_{j=k\lor(t+2)}^\infty\frac{\lambda^j}{j!}\zeta_{j,t}-\alpha\sum_{j=\Delta\lor (t+2)}^\infty\frac{\lambda^j}{j!}\zeta_{j,t}.
\end{align*}
Hence, using the assumption $\Delta\ge k$, we have
\begin{align*}
|(1-\beta)Z_k(\lambda)(\mu(t)-\nu(t))|&\le \sum_{j=k\lor(t+2)}^\infty\frac{\lambda^j}{j!}\zeta_{j,t}= \sum_{j=k\lor (t+2)}^\infty\frac{\lambda^j}{t!(j-t)!}(1-\gamma)^t\gamma^{j-t}\le \frac{2e^\lambda\lambda^t\gamma^2}{t!}.
\end{align*}
For the rest of the proof we use asymptotic notation, letting $\Delta\to \infty$ while $\varepsilon,k,\alpha$ are fixed (so $\beta,\gamma=o(1)$ and $\lambda=O(1)$). In asymptotic notation, we have just shown that $|\mu(t)-\nu(t)|=O(\gamma^2\lambda^t/t!)$, and therefore
$|\varphi_\mu(x)-\varphi_{\nu}(x)|,|\varphi_\mu'(x)-\varphi_{\nu}'(x)|\le O(\gamma^2)$ for all $x\in[0,1]$.
Also, note that $\mu(t)\le e^\lambda \lambda^t/t!$, so $\varphi_\mu'(x)=O(1)$ for all $x\in [0,1]$, and $\varphi_\mu'(1)\ge \mu(k)=\Omega(1)$. So, we have
\[\varphi_\mu\left(1-\frac{\varphi_\mu'(1-x)}{\varphi_\mu'(1)}\right)-\varphi_\nu\left(1-\frac{\varphi_\nu'(1-x)}{\varphi_\nu'(1)}\right)=O(\gamma^2)+O\bigg(\frac{\varphi_\mu'(1-x)}{\varphi_\mu'(1)}-\frac{\varphi_\nu'(1-x)}{\varphi_\nu'(1)}\bigg)=O(\gamma^2)\]
for each $x\in[0,1]$.
Recalling that $M_f(x) = x\varphi_f'(1-x)+\varphi_f(1-x)+\varphi_f(1-\varphi_f'(1-x)/\varphi_f'(1)) - 1$, it follows that $\sup_{x\in[0,1]}|M_\mu(x)-M_{\nu}(x)|=O(\gamma^2)$. Finally, we claim that $\gamma^2=o(\beta)$, which would imply that if $\Delta$ is sufficiently large in terms of the other parameters, then \cref{eq:M-transfer} holds. To see this, we observe that $\gamma=O(\E[X\mbm1_{X\ge \Delta}])$ and $\beta=\Omega(\E[\mbm1_{X\ge \Delta}])$, for $X\sim\on{Poisson}(\lambda)$, and $\E[X\mbm1_{X\ge \Delta}]\le (\E X^4)^{1/4}(\E[\mbm1_{X\ge \Delta}])^{3/4}=O((\E[\mbm1_{X\ge \Delta}])^{3/4})$ by H\"older's inequality. 
\medskip

\noindent \textbf{Step 2: Log-concavity.}
Next we show the desired log-concavity of $\varphi_{\nu}''(x)$, for $x\in [0,1]$. By rescaling $\nu$ and $x$, we see that it suffices to show $\phi''$ is log-concave in the interval $x\in [0,(1-\gamma)\lambda]$, where $\phi:[0,1]\to \RR$ is defined by
\[\phi(x) = \sum_{t=k-1}^\infty\frac{x^t}{t!}((\mbm{1}_{t\ge k}-\alpha\mbm{1}_{t\ge\Delta}) + \gamma\lambda(\mbm{1}_{t+1\ge k}-\alpha\mbm{1}_{t+1\ge\Delta})).\]

Now, it is well-known (see  for example \cite[Lemma~3]{BB05}) that if a nonnegative function $h$ is log-concave on an interval $[a,b]$, then its antiderivative $x\mapsto \int_a^xh(y)\on dy$ is also log-concave on that same interval. Since $k\ge 3$, it therefore suffices to show that the $(k-1)$-fold derivative $f = \phi^{(k-1)}$ is log-concave on the interval $[0,(1-\gamma)\lambda]$. We claim that in fact it suffices to prove that $g=f+1$ is log-concave on this interval. Indeed, first observe that $f$ is convex (e.g., by differentiating term-by-term). If we could show that $g = f+1$ were log-concave on $[0,(1-\gamma)\lambda]$, it would follow that for any $x,y\in [0,(1-\gamma)\lambda]$,
\begin{align*}
f(x)f(y) &= (1+f(x))(1+f(y))-f(x)-f(y)-1\\
&\le\bigg(1+f\bigg(\frac{x+y}{2}\bigg)\bigg)^2-2f\bigg(\frac{x+y}{2}\bigg)-1= f\bigg(\frac{x+y}{2}\bigg)^2
\end{align*}
implying that $f$ itself is log-concave on $[0,(1-\gamma)\lambda]$, as desired. So, we study the function $g$, which we may write as
\[g(x) = (1+\gamma\lambda)\sum_{t=0}^{\Delta-k-1}\frac{x^t}{t!} + (1+\gamma\lambda(1-\alpha))\frac{x^{\Delta-k}}{(\Delta-k)!}+(1+\gamma\lambda)(1-\alpha)\sum_{t = \Delta-k+1}^\infty\frac{x^t}{t!}.\]
Let $h(x) = e^x-g(x)/(1+\gamma\lambda)$, so
\[h(x) = \frac{\alpha\gamma\lambda}{1+\gamma\lambda}\cdot\frac{x^{\Delta-k}}{(\Delta-k)!} + \alpha\sum_{t=\Delta-k+1}^\infty\frac{x^t}{t!}.\]
We have
\begin{align*}
\frac{\on{d}^2}{\on{d}x^2}\log\left(\frac{g(x)}{1+\gamma\lambda}\right) &= \frac{(e^x-h(x))(e^x-h''(x))-(e^x-h'(x))^2}{(e^x-h(x))^2}\\
&= -\frac{e^x(h(x)+h''(x)-2h'(x)) + (h'(x)^2-h(x)h''(x))}{(e^x-h(x))^2}.
\end{align*}
Furthermore, for sufficiently large $\Delta$ we have $h(x)-2h'(x)+h''(x)\ge 0$ for $x\in [0,(1-\gamma)\lambda]$. To see this, observe that $h(x)$ has a nonzero coefficient of $x^t$ only when $t\ge \Delta-k$, and note that $x^t-2tx^{t-1}+t(t-1)x^{t-2} = x^{t-2}(x^2-tx+t(t-1))\ge 0$ when $x\le t-\sqrt{t}$. Thus, it suffices to show that $h(x)h''(x)\le h'(x)^2$; that is, it suffices to show that $h$ is log-concave on $[0,(1-\gamma)\lambda]$. We will show that in fact $h$ is log-concave everywhere.

To this end, it suffices to show that the $(\Delta-k)$-fold derivative $r=h^{(\Delta-k)}$ is log-concave. We write
\[r(x) = \frac{\alpha\gamma\lambda}{1+\gamma\lambda} + \alpha\sum_{t=1}^\infty\frac{x^t}{t!}\]
and now we can use the same trick as before: $r$ is convex, so we finish the proof by observing that the function $r+\alpha/(1+\gamma\lambda)$, which is precisely the exponential function $x\mapsto e^x$, is log-concave.
\end{proof}


\section{Extracting high-degree vertices with random neighbourhoods}

In the last section we showed that if we take a random graph $G\sim \mc K(V,m,k)$ and remove a set of high-degree vertices $T$, then the result $G[V\setminus T]$ typically has small corank. As sketched in \cref{sec:outline}, the next step is to add back the high-degree vertices, and show that they ``boost'' the corank to zero. Unfortunately, after deleting the high-degree vertices we are not yet in a position to apply the rank-boosting lemmas in \cref{sec:boosting}, mainly because $G[V\setminus T]$ does not satisfy the unstructured kernel property defined in \cref{sec:level-sets} (in the language of \cref{lem:UKP}, $G[V\setminus T]$ is not ``good''). So, we need to strategically add back a few of the vertices in $T$ (but we need to be careful to do so in such a way that we retain useful randomness). Say that a pair of vertices $(u,v)$ in a (multi)graph is \emph{$r$-joined} if $u\ne v$ and there is a path of length at most $r$ between $u$ and $v$, or if $u=v$ and there is a cycle of length at most $r$ containing $u=v$.

\begin{lemma}\label{lem:extraction}
Fix  $\Delta\ge k\ge 3$ and $\varepsilon,\alpha > 0$, such that $\Delta$ is sufficiently large with respect to $\varepsilon,\alpha$. Consider sets $S\subseteq V$ and an integer $m$ such that $|V|=\Theta(n)$ and $\varepsilon n\le m-k|V|/2\le n/\varepsilon$ and $|S|=\floor{\vphantom{x^{x^x}}\alpha |V|}$, and consider a random graph $G\sim \mc K(V,m,k)$.
\begin{itemize}
\item Let $T=\{v\in S:\deg_G(v)\ge \Delta\}$.
\item Let $B_{\mr{bias}}\subseteq V\setminus T$ be the set of vertices which either have $\deg_{V\setminus T}(v)<k$ or are within distance 2 of such a vertex (in $G[V\setminus T]$).
\item Let $T_{\mr{bad}}$ be the set of vertices appearing in a pair $(u,v)\in T^2$ which is $6$-joined in $G[T\cup B_{\mr{bias}}]$.
\item Let $E = V\setminus(B_{\mr{bias}}\cup S)$.
\item Let $T_{\mr{low}} = \{v\in T: \deg_E(v)<\sqrt{\Delta}\}$.
\item Let $T' = T\setminus (T_{\mr{bad}}\cup T_{\mr{low}})$.
\end{itemize}
Then, the following hold.
\begin{enumerate}
\item For every superset $U\supseteq V\setminus T'$, the graph $G[U]$ has minimum degree at least $2$ and has no pair of degree-2 vertices within distance 4 of each other.
\item Every vertex $v\in T'$ has $\deg_{E}(v)\ge \sqrt \Delta$.
\item Whp $|T'|\ge 7|T|/8$.
\item Suppose we reveal the sets $T',E$, and reveal the status of every edge in $G$ not between $T'$ and $E$, and reveal the degree sequence $(\deg_{E}(v))_{v\in T'}$. Then, conditionally, for each $v\in T'$, the neighbourhood $N_{E}(v)$ is a uniformly random subset of $\deg_{E}(v)$ vertices of $E$, and all such neighbourhoods are jointly independent.
\end{enumerate}
\end{lemma}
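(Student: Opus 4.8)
The plan is to handle parts (1), (2), (4) by purely structural (deterministic) reasoning, and part (3) by first-moment estimates in the configuration model together with concentration of measure. Part (2) is immediate: $T_{\mr{low}}$ is by definition the set of $v\in T$ with $\deg_E(v)<\sqrt\Delta$, and $T'\subseteq T\setminus T_{\mr{low}}$. For everything else I would first record three consequences of the definition of $T_{\mr{bad}}$: \textbf{(a)} there is no edge between $T$ and $T'$ (an edge $uu'$ with $u\in T$, $u'\in T'$ would make $(u,u')$ a $1$-joined, hence $6$-joined, pair in $G[T\cup B_{\mr{bias}}]$, forcing $u'\in T_{\mr{bad}}$); \textbf{(b)} every vertex of $T\cup B_{\mr{bias}}$ has at most one neighbour in $T'$ (two such neighbours $u_1,u_2$ would be $2$-joined through that vertex, inside $G[T\cup B_{\mr{bias}}]$, so $u_1,u_2\in T_{\mr{bad}}$); and \textbf{(c)} $E\subseteq V\setminus S\subseteq V\setminus T'$ and $E\cap S=\emptyset$, so if $v\in T'$ survives in some $G[U]$ with $U\supseteq V\setminus T'$ then $v$ keeps all of its $\ge\sqrt\Delta$ neighbours in $E$.

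For part (1), write $W=V\setminus U\subseteq T'$ and check $\deg_{G[U]}\ge 2$ vertex by vertex: vertices of $T'\cap U$ keep $\ge\sqrt\Delta$ neighbours by (c); vertices of $T\setminus T'$ keep all $\ge\Delta$ neighbours by (a); a vertex of $V\setminus T$ outside $B_{\mr{bias}}$ is non-core so has $\ge k$ neighbours in $V\setminus T\subseteq U$; and a vertex of $B_{\mr{bias}}$ loses at most one neighbour by (b), so keeps $\ge k-1\ge 2$. This analysis also pins down the degree-$2$ vertices of $G[U]$ exactly: such a vertex $v$ lies in $V\setminus T$, has $\deg_G(v)=3$ (so $k=3$), its unique $T$-neighbour is some $\tau(v)\in T'\cap W$ (by (a), (b), and the fact that a neighbour in $T\setminus T'$ together with $\tau(v)$ would $2$-join through $v$), and its other two neighbours lie in $V\setminus T$; in particular $\deg_{V\setminus T}(v)=2<k$, so $v$ is a ``core'' vertex of $B_{\mr{bias}}$. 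Now if $v,v'$ are two degree-$2$ vertices of $G[U]$ at distance $\le 4$ in $G[U]$, joined by a path $P$, then — since the two $G[U]$-neighbours of a degree-$2$ vertex lie in $V\setminus T$, and since $B_{\mr{bias}}$ contains everything within distance $2$ of a core vertex in $G[V\setminus T]$ — every internal vertex of $P$ lies in $T\cup B_{\mr{bias}}$ (the first is a $V\setminus T$-neighbour of the core vertex $v$, the last of $v'$, and a middle vertex is either in $T$ or a distance-$\le2$ neighbour of $v$ or $v'$ in $G[V\setminus T]$). Then the walk $\tau(v),v,P,v',\tau(v')$ of length $\le 6$ lies inside $G[T\cup B_{\mr{bias}}]$, so $(\tau(v),\tau(v'))$ — or $(\tau(v),\tau(v))$ via the resulting short cycle if $\tau(v)=\tau(v')$ — is $6$-joined there, contradicting $\tau(v),\tau(v')\in T'$. (This is precisely why the constants $6$ and $2$ appear in the definitions of $T_{\mr{bad}}$ and $B_{\mr{bias}}$.)

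For part (4) the point is bookkeeping: each of $T,B_{\mr{bias}},E,T_{\mr{bad}},T_{\mr{low}},T'$ is a function of information not involving the $T'$--$E$ edges. Indeed $T$ is determined by the degree sequence and $S$ (and the degrees of vertices in $S$, in particular those of $T'$, are recovered as $\deg_E(v)$ plus their revealed non-$(T'$--$E)$ degree); $G[V\setminus T]$ is fully revealed (it contains no $T'$--$E$ edge since $T'\subseteq T$), so $B_{\mr{bias}}$ and then $E$ are determined; $G[T\cup B_{\mr{bias}}]$ is revealed (again no $T'$--$E$ edge, as $E\cap B_{\mr{bias}}=\emptyset$), so $T_{\mr{bad}}$ is determined; and $T_{\mr{low}}$ is recovered from the revealed $E$-degrees of $T'$-vertices (each $\ge\sqrt\Delta$) together with the fully-revealed edges at $T\setminus T'$. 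Consequently the set of graphs in $\mc K(V,m,k)$ consistent with the revealed information is exactly the revealed edges plus an arbitrary simple bipartite graph between $T'$ and $E$ with prescribed left-degrees $(\deg_E(v))_{v\in T'}$: the total edge count is then automatically $m$, the minimum-degree constraint is automatic (vertices of $T'$ already have degree $\ge\Delta$, and every other vertex already has $\ge k$ revealed neighbours in $V\setminus T$), and re-running the extraction on any such graph recovers the revealed sets. Since $\mc K(V,m,k)$ is uniform, the conditional law of the $T'$--$E$ edges is uniform over all such bipartite graphs, which is exactly: independently for each $v\in T'$, a uniformly random $\deg_E(v)$-subset of $E$.

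For part (3), note $|T|-|T'|\le|T_{\mr{bad}}|+|T_{\mr{low}}|$, and by \cref{lem:bulk-sequence}(1) whp $|T|=\beta n+O(n^{3/4})$ with $\beta=\Theta(1)$, so it suffices to show $\E|T_{\mr{bad}}|,\E|T_{\mr{low}}|\le\beta n/100$ and then invoke concentration. Any $v\in T_{\mr{bad}}$ lies on a $\le6$-path in $G$ between two distinct vertices of $T$, or on a $\le6$-cycle through a vertex of $T$; a standard first-moment computation in the configuration model (using $\sum_x\deg_G(x)^2=O(n)$ from \cref{lem:core-degree-sequence}(2) and $\sum_{x\in T}\deg_G(x)=\gamma\cdot2m$) bounds the expected number of such paths by $O(\gamma^2n)$ and of such cycles by $o(n)$, and since $\gamma^2=o(\beta)$ (shown via H\"older in the proof of \cref{lem:rank-analysis}) this gives $\E|T_{\mr{bad}}|\le\beta n/100$ once $\Delta$ is large. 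For $T_{\mr{low}}$ one shows a typical $v\in T$ keeps a constant fraction of its $\ge\Delta$ neighbours in $E$: $\deg_S(v)$ is essentially binomial with mean $\approx\alpha\deg_G(v)$ and hence $\le\deg_G(v)/3$ except with probability $e^{-\Omega(\Delta)}$ (taking $\alpha$ small), and a similar configuration-model estimate controls $\deg_{B_{\mr{bias}}}(v)$; summing failure probabilities gives $\E|T_{\mr{low}}|\le e^{-\Omega(\Delta)}\beta n\le\beta n/100$. Finally, $|T_{\mr{bad}}|$ and $|T_{\mr{low}}|$ change by $n^{o(1)}$ under a transposition of the configuration, so by a martingale inequality (e.g.\ \cite[Eq.~(29)]{McD98}) each is within $o(n)$ of its expectation whp (transferring from the configuration model as in \cref{sec:degree-constrained}); combining yields $|T'|\ge7|T|/8$ whp. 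I expect the $T_{\mr{low}}$ estimate to be the main obstacle: $B_{\mr{bias}}$ is a global object entangled with how the half-edges of $T$ match, and a positive fraction of neighbours of $T$-vertices can themselves have degree exactly $k$ and thus be automatically ``core'' and in $B_{\mr{bias}}$ — this is exactly where one needs $\alpha$ to be small relative to $\varepsilon$ and $k$, and getting a clean handle on $\deg_{B_{\mr{bias}}}(v)$ without losing polylogarithmic factors requires revealing $G[V\setminus T]$ (hence $B_{\mr{bias}}$) first and only then analysing the matching of $T$'s half-edges into $E$.
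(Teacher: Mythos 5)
Parts (1), (2) and (4) of your proposal are correct and essentially identical to the paper's arguments: the paper also derives (1) from the observations that no vertex of $T$ neighbours a vertex of $T'$ and that no vertex of $T\cup B_{\mr{bias}}$ has two neighbours in $T'$ (your (a) and (b)), and it proves (4) by exactly your switching argument on the bipartite graph between $T'$ and $E$. Your first-moment bound for $|T_{\mr{bad}}|$ in part (3) also matches the paper's count of short paths between pairs of $T$-vertices, including the reliance on $\gamma^2=o(\beta)$ and on concentration under transpositions of the configuration.

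The genuine gap is in your treatment of $T_{\mr{low}}$. You propose to show $\deg_{B_{\mr{bias}}}(v)\le\deg_G(v)/3$ for typical $v\in T$ by ``a similar configuration-model estimate'' to the binomial bound for $\deg_S(v)$, with $\alpha$ small. This cannot work: every neighbour $w$ of $v$ with $\deg_G(w)=k$ automatically satisfies $\deg_{V\setminus T}(w)\le k-1<k$ (it is adjacent to $v\in T$) and hence lies in $B_{\mr{bias}}$, and the proportion of such neighbours is governed by the size-biased degree distribution, roughly $k\rho_k/(2m/|V|)$ --- a quantity depending on $k$ and $\varepsilon$ but not on $\alpha$ or $\Delta$, and which exceeds $1/3$ (indeed approaches $1$) when $2m/|V|$ is close to $k$. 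So taking $\alpha$ small, as you suggest in your closing sentence, does not address the phenomenon you yourself identify. What is needed is a lower bound on $\deg_E(v)$ rather than an upper bound on $\deg_{B_{\mr{bias}}}(v)$ by an absolute fraction: the paper defines a locally certified set $B_{\mr{safe}}\subseteq E$ (vertices of degree $k+1$ with exactly one neighbour in $S$ whose entire $2$-neighbourhood in $G[V\setminus S]$ consists of degree-$k$ vertices with no $S$-neighbours), shows that each point in $v$'s bucket is matched into $B_{\mr{safe}}$ with probability at least some $p=p(k,\varepsilon,\alpha)>0$ independent of $\Delta$, and --- crucially --- this certificate is decided by inspecting only $O(1)$ points of the configuration, so the $\Delta$ indicator events are nearly independent and a Chernoff bound gives $\deg_E(v)\ge\deg_{B_{\mr{safe}}}(v)\ge\sqrt\Delta$ except with probability $e^{-\Omega(\Delta)}$. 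To salvage your route you would have to replace $1/3$ by $1-\delta(k,\varepsilon,\alpha)$, prove that a random neighbour avoids $B_{\mr{bias}}\cup S$ with probability at least $2\delta$ (which already forces you to exhibit a positive-probability local certificate, i.e.\ the $B_{\mr{safe}}$ idea), and then establish concentration for $\deg_{B_{\mr{bias}}}(v)$, whose defining events require exploring neighbourhoods of unbounded size --- precisely the ``entanglement'' you flag at the end without resolving.
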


Some of the parts of \cref{lem:extraction} are straightforward. In particular, (2) is immediate by the definitions of $T_{\mr{low}}$ and $T'$. Part (1) is essentially true by definition as well.

\begin{proof}[Proof of \cref{lem:extraction}(1)]
Consider $U\supseteq V\setminus T'$. First, note that $T_{\mr{bad}}\subseteq U$, so the vertices in $U\cap T$ have all of their (at least $\Delta\ge k\ge 3$) neighbours inside $U$. So, the only vertices that could possibly have degree less than 3 are the vertices in $V\setminus T$.

Now, since $k\ge 3$, if a vertex $v\in V\setminus T$ has degree at most 2 in $G[U]$, then $v\in B_{\mr{bias}}$. Such a $v$ has exactly one neighbour $x_v$ in $T\setminus U$ (if it had more than one neighbour in $T$, all of those neighbours would be in $T_{\mr{bad}}\subseteq U$), so in fact this is only possible if $k=3$ and $\deg_{V\setminus T}(v)=2$.

Finally, for two degree-2 vertices $u,v$, each having unique neighbours $x_u,x_v$ in $T\setminus U$, there cannot be a path of length at most 4 between $u,v$ in $G[U]$. Indeed, all the vertices of this path would be in $B_{\mr{bias}}$, which would imply a path of length at most $6$ between $x_u,x_v\in T\setminus T_{\mr{bad}}$ (or a cycle of length at most $6$ involving $x_u=x_v$) in $G[T\cup B_{\mr{bias}}]$.
\end{proof}

For (4), we proceed in essentially the same way as \cref{lem:rotate-bulk,lem:rotate-core}.

\begin{proof}[Proof of \cref{lem:extraction}(4)]
Let $G[T',E]$ be the bipartite graph of edges between $T'$ and $E$. Let $H_1,H_2$ be bipartite graphs with the same bipartition $X\cup Y$, such that every vertex in $X$ has the same degree in $H_1$ as it does in $H_2$. Then, for any outcome of $G$ such that $G[T',E]=H_1$, we can swap $G[T',E]$ with $H_2$ to obtain an outcome of $G$ such that $G[T',E]=H_2$. So, $H_1$ and $H_2$ are equally likely to occur as $G[T',E]$. (It is important that this swap can never change the sets $E$ or $T'$, and can never cause the degree of any vertex to drop below $k$.)
\end{proof}

The most involved of the four parts of \cref{lem:extraction} is (3), for which we perform some calculations in the configuration model. The calculations are similar to the ones in \cref{lem:bulk-sequence,lem:BS}, so we will be brief with the details.

\begin{proof}[Proof of \cref{lem:extraction}(3)]
Let $\mbf d=(d_1,\dots,d_n)$ be the degree sequence of $G$, so as in the proof of \cref{lem:bulk-sequence}, by \cref{lem:core-degree-sequence} and a concentration inequality, the following properties hold whp. (Recall \cref{def:CW,def:CW-mod}).
\begin{itemize}
    \item $T=\beta n+o(n)$.
    \item Each $d_v\le\log n$.
    \item For all $t$, there are $\rho_t n+O(n^{3/4})$ vertices $v\in V$ with $d_v=t$.
    \item For all $t\ge k$, there are $(1-\alpha)\rho_t n+O(n^{3/4})$ vertices $v\in V\setminus S$ with $d_v=t$.
    \item For all $t\ge \Delta$, there are $\alpha \rho_t n+O(n^{3/4})$ vertices $v\in T$ with $d_v=t$.
\end{itemize}
Condition on an outcome of $\mbf d$ satisfying the above properties. Now, $G$ is a uniformly random graph with degree sequence $\mbf d$, which we may study with the configuration model. So, consider a uniformly random configuration with degree sequence $\mbf d$, defined in terms of a random permutation $\sigma$. Let $G^*$ be the resulting contracted multigraph. We may interpret the random sets $B_{\mr{bias}},T_{\mr{bad}},E,T_{\mr{low}},T'$ as being defined with respect to $G^*$.

Define $B_{\mr{safe}}$ to be the set of vertices $v$ in $G^*[V\setminus S]$ for which $d_v=k+1$ and $\deg_{V\setminus S}(v)=k$, and for which all vertices $w$ within distance $2$ of $v$ in $G^*[V\setminus S]$ have $d_w=\deg_{V\setminus S}=k$. Note that $E\supseteq B_{\mr{safe}}$. Let $X$ be the number of vertices in $T$ with fewer than $\sqrt{\Delta}$ neighbours in $B_{\mr{safe}}$, and let $Y$ be the number of paths of length at most 6 (in $G^*$) between an ordered pair of vertices $(u,v)\in T^2$ (where we count a cycle of length greater than zero as a path, in the case $u=v$). It suffices to show that $X,Y\le (\beta/16) n$ whp. By a concentration inequality for random permutations, we have $X=\E X+o(n)$ and $Y=\E Y+o(n)$ whp, so it actually suffices to show that $\E X,\E Y\le (\beta/20) n$.

First, we consider $Y$. For any sequence $t_1,\dots,t_{r-1}$ (with $r\le 6$) there are
\[(2m\gamma+O(n^{3/4}\log n))^2\prod_{i=1}^{r-1} t_i(t_i-1)(\rho_{t_i} n+O(n^{3/4}))\]
ways to choose a sequence of distinct points $u,x_1,y_1,\dots,x_{r-1},y_{r-1},v$, such that $u,v$ are each in a bucket corresponding to a vertex in $T$, and for each $i$, the points $x_i,y_i$ are in a common bucket corresponding to a vertex $v$ with $d_v=t_i$. The probability any given sequence of $r$ disjoint pairs are chosen in our random configuration is $(2m-1)^{-1}(2m-3)^{-1}\dots(2m-2r+1)^{-1}=(1+o(1))(2m)^{-r}$. It follows that
\begin{align*}\E Y&\le \sum_{r=1}^6 (2m\gamma+o(n))^2\left(\sum_{t=k}^{\log n}t(t-1)\rho_t n+O(n^{3/4}(\log n)^2)\right)^{r-1} \frac{1+o(1)}{(2m)^r}\\
&\le\gamma^2n\sum_{r=1}^6 2^{2-r}\varepsilon^{-|2-r|}\left(\sum_{t=k}^{\infty}t(t-1)\rho_t\right)^{r-1}+o(n).
\end{align*}
Recalling the interpretation of $\rho_t$ as a probability mass of a truncated Poisson distribution, we have $\sum_{t=k}^{\infty}t(t-1)\rho_t\le \lambda^2/Z(\lambda)$ (in particular, the sum is convergent and does not depend on $\Delta$). Also, recall from the proof of \cref{lem:rank-analysis} that $\gamma^2/\beta$ can be made arbitrarily small by taking sufficiently large $\Delta$ (holding $\varepsilon,\alpha,k$ constant). So for large $\Delta$ we have $\E Y\le (\beta/20) n$, as desired.

It remains to consider $X$. Consider any vertex $v\in T$, and let $b_v$ be a set of $\Delta$ points in the bucket corresponding to $v$. Let $Q_v$ be the set of points in $b_v$ which are matched to a point corresponding to a vertex in $B_{\mr{safe}}$ in our random configuration. Note that $x\in Q_v$ if and only if $x$ is matched with a point corresponding to a degree-$(k+1)$ vertex in $V\setminus S$, and the entire 2-neighbourhood of that vertex consists of vertices in $V\setminus S$ which have degree $k$. There are at least $(1-\alpha)\delta_k n+o(n)$ vertices $v\in V\setminus S$ with $d_v=k$, and at least $(1-\alpha)\delta_{k+1} n+o(n)$ vertices $v\in V\setminus S$ with $d_v=k+1$, and in total our configuration model has $2m\le n/\varepsilon$ points. So, for each $x\in b_v$, we have
\[\Pr(x\in Q_v)\ge (\varepsilon (1-\alpha)\delta_{k+1})(\varepsilon (1-\alpha)\delta_k)^{k^3}+o(1).\]
Let $p=(\varepsilon (1-\alpha)\rho_{k+1})(\varepsilon (1-\alpha)\rho_k)^{k^3}/2$; crucially, this quantity does not depend on $\Delta$. Note that the events $x\in Q_v$, for $x\in b_v$, are very nearly independent (we only need to inspect $O(1)$ points to determine whether $x\in Q_v$, and there are only $\Delta$ points in $b_v$), so $|Q_v|$ stochastically dominates the binomial distribution with parameters $\Delta$ and $p$. By a Chernoff bound, the probability that $|Q_v|\le\sqrt\Delta$ (i.e., that $v$ has fewer than $\sqrt{\Delta}$ neighbours in $B_{\mr{safe}}$) is at most $e^{-p\Delta/8}\le 1/21$ for large $\Delta$. It follows that $\E X\le (\beta/20)n$, as desired.
\end{proof}

\section{Finishing the proof}\label{sec:finishing}
\global\long\def\cc{\mathrm{c}}
In this section we combine all the ingredients collected so far and prove \cref{thm:main}. As part of this proof, we will need the following simple observation.

\begin{proof}[Proof of \cref{thm:main}]
For a small constant $\varepsilon>0$, let $\mathcal E_{\mr{large}}$ be the event that $\core_k(G)$ has at least $\varepsilon n$ vertices, at least $k|\core_k(G)|/2+\varepsilon^2 n$ edges, and at most $n/\varepsilon$ edges (i.e., it has linear size, bounded average degree, and linearly many vertices have degree strictly greater than $k$). It follows from the work of Pittel, Spencer and Wormald~\cite{PSW96} that if $\varepsilon=\varepsilon_k>0$ is a small enough constant, then whp either $\core_k(G)=\emptyset$ or $\mc E_{\mr{large}}$ holds (see for example \cite{JL08} for explicit theorem statements which imply this claim).

For a small constant $\theta'>0$, let $\mathcal E_{\mr{odd}}$ be the event that $\core_k(G)$ has at least $\theta' n$ odd-degree vertices. If we condition on an outcome of the numbers of vertices and edges in $\core_k(G)$ such that $\mathcal E_{\mr{large}}$ holds, then \cref{lem:core-degree-sequence} tells us that for sufficiently small $\theta'>0$, the event $\mathcal E_{\mr{odd}}$ holds whp (considering vertices of degree $k$ or $k+1$, depending on the parity of $k$).

Let $\theta=\theta'/2$, and for a small constant $\eta>0$, let $\mathcal{E}_{\mr{UKP}}$ be the event that the conclusion of \cref{lem:UKP} holds (so if $\eta$ is sufficiently small, $\mathcal{E}_{\mr{UKP}}$ holds whp). For the convenience of the reader we recall that in the statement of \cref{lem:UKP} we defined an induced subgraph of $G$ to be \emph{good} if it has minimum degree at least 2, every pair of degree-2 vertices are at distance at least 4, there are fewer than $(\eta/4) n$ vertices within distance 7 of a degree-2 vertex, and there are at least $\theta n$ odd-degree vertices. The conclusion of \cref{lem:UKP} was that every good induced subgraph of $G$ satisfies the unstructured kernel property $\mr{UKP}(2,\eta/3,\eta)$.

For a small constant $\alpha>0$, let $\mathcal E_{\mr{expand}}$ be the event that for every set $S$ of $\alpha n$ vertices, there are at most $\min(\theta'/2,\eta/4)n$ vertices within distance $8$ of a vertex in $S$. If $\alpha$ is sufficiently small, then $\mathcal E_{\mr{expand}}$ holds whp (for example, one can iterate \cref{lem:expansion-estimates-1}).

Let $\mc S$ be the event that $\core_k(G)$ is singular, and fix $\Delta\in \NN$ (which we view as a constant for the purpose of ``whp'' statements and asymptotic notation). It now suffices to show that
\begin{equation}\label{eq:f(Delta)}
\Pr(\mc S\cap \mc E_{\mathrm{UKP}}\cap \mc E_{\mr{odd}}\cap \mc E_{\mr{expand}}\cap\mc{E}_{\mr{large}})\le f(\Delta)+o(1),
\end{equation}
where $f$ is some function satisfying $\lim_{\Delta\to \infty}f(\Delta)=0$. Indeed, recalling that the empty $0\times 0$ matrix is nonsingular, it will follow from the above considerations that $\Pr(\mc S)\le f(\Delta)+o(1)$ and therefore $\limsup_{n\to\infty} \Pr(\mc S)\le f(\Delta)$. Since this is true for all $\Delta$, and $f(\Delta)\to 0$ as $\Delta\to\infty$, the desired result will follow.

Let $V$ be the vertex set of $\core_k(G)$, and let $m$ be the number of edges in $\core_k(G)$. Condition on outcomes of $V,m$ satisfying $\mc E_{\mr{large}}$. By \cref{lem:rotate-core}, $\core_k(G)$ is now a uniformly random graph on the vertex set $V$ with $m$ edges and minimum degree at least $k$. That is to say, $\core_k(G)\sim \mc K(V,m,k)$.

Fix a subset $S\subseteq V$ of $\floor{\vphantom{x^{x^x}}\alpha |V|}$ vertices. As in the statement of \cref{lem:rank}, let $T$ be the set of vertices in $S$ which have at least $\Delta$ neighbours in $V$, and let $T'\subseteq T$ be the set defined in \cref{lem:extraction}. Let $A$ be the adjacency matrix of $G[V\setminus T]$, and let $A'$ be the adjacency matrix of $G[V\setminus T']$. By \cref{lem:rank}, whp $\corank A\le |T|/8$, and by \cref{lem:extraction}(3), whp $|T'|\ge 7|T|/8$. It follows that in fact whp
\begin{equation}\label{eq:initial-rank}
\corank A' \le |T|/8+|T|/8\le |T'|/2.
\end{equation}

With $E$ as defined in \cref{lem:extraction}, condition on all information except the edges between $E$ and $T'$. Moreover condition on the degree of every vertex in $T'$ into $E$. Let $\mc{I}=(V,m,\tilde G,(\deg_E v)_{v\in T'})$ encode the information revealed so far, where $\tilde G$ is the graph consisting of all edges of $G$ not between $T'$ and $E$. Assume that \cref{eq:initial-rank} holds.

By \cref{lem:extraction}(2), every vertex in $T'$ has at least $\sqrt \Delta$ neighbours in $E$, and by \cref{lem:extraction}(4), these neighbours are uniformly random. Now, let $t=|T'|$, and enumerate the vertices in $T'$ as $v_1,\dots,v_{t}$. Let $T_i=\{v_{i+1},\dots,v_{t}\}$, and write $A_i$ for the adjacency matrix of $G[V\setminus T_i]$. Also, write $\mc E^{i}_{\mathrm{UKP}}$ for the event that $A_i$ satisfies $\on{UKP}(2,\eta/3,\eta)$. We also define a sequence of random variables $X_0,\dots,X_{t}$ as follows.
\[X_i=\begin{cases}
\corank A_i&\text{ if }\mc E^0_{\mathrm{UKP}},\dots,\mc E^{i}_{\mathrm{UKP}}\text{ all hold}\\
0&\text{ otherwise.}
\end{cases}
\]
Note that each $X_i$ is determined by $G[V\setminus T_i]$. By \cref{lem:decoupling},
\[\Pr(X_t\ge \max(X_{t-1},1)\,|\,X_1,\dots,X_{t-1},\mc{I})\le\frac{(\log\Delta^{1/2})^C}{\Delta^{1/4}}\]
for an absolute constant $C$. Let $f(\Delta)=11(\log\Delta)^C\Delta^{-1/4}$, so by \cref{eq:initial-rank} and \cref{lem:random-walk}, we have $\Pr(X_t=0\,|\,\mc{I})\ge 1-f(\Delta)$ for sufficiently large $n$. (That $|E|$ is sufficiently large to apply \cref{lem:decoupling} follows from $\mathcal E_{\mr{expand}}$ and the fact that every vertex in $B_{\on{bias}}\cup S$ is within distance $4$ of $S$.)

Now we claim that if $\mc{E}_{\mr{UKP}}\cap\mc{E}_{\mr{odd}}\cap\mc{E}_{\mr{expand}}$ holds then $\corank A_i = X_i$ for all $i$ (so if additionally $X_t=0$ then $\core_k(G)$ is nonsingular). Indeed, for each $G[V\setminus T_i]$, first $\mc{E}_{\mr{odd}}\cap\mc{E}_{\mr{expand}}$ implies that there are at least $\theta'n-(\theta'n/2)=\theta n$ odd-degree vertices, then $\mc{E}_{\mr{expand}}$ implies that there are fewer than $(\eta/4)n$ vertices within distance 7 of a degree-2 vertex (since each degree-2 vertex has a neighbour in $S$ with respect to $G$), and then \cref{lem:extraction}(1) implies that the minimum degree is at least 2 and there is no pair of degree-2 vertices within distance 4 of each other. That is to say, each $G[V\setminus T_i]$ is good, so $\mc{E}_{\mr{UKP}}$ implies that each $\mc E^{i}_{\mathrm{UKP}}$ holds, so $\corank A_i = X_i$ for all $i$, as claimed.

Now we are ready to conclude the proof. For all $\mc{I}$ such that $V,m$ satisfy $\mc{E}_{\mr{large}}$ and such that \cref{eq:initial-rank} holds, we have
\[\Pr(\mc{S}\cap\mc{E}_{\mr{UKP}}\cap\mc{E}_{\mr{odd}}\cap\mc{E}_{\mr{expand}}\,|\,\mc{I})\le\Pr(\mc{S}\cap\corank A_t=X_t\,|\,\mc{I})\le\Pr(X_t = 0\,|\,\mc{I})\le f(\Delta).\]
Using the law of total probability and the fact that \cref{eq:initial-rank} holds (uniformly) whp conditional on any $V,m$ satisfying $\mc{E}_{\mr{large}}$, we find
\[\Pr(\mc{S}\cap\mc{E}_{\mr{UKP}}\cap\mc{E}_{\mr{odd}}\cap\mc{E}_{\mr{expand}}\,|\,V,m)\le f(\Delta)+o(1).\]
Finally, \cref{eq:f(Delta)} follows from the law of total probability applied over pairs $V,m$ satisfying $\mc{E}_{\mr{large}}$. As discussed, this concludes the proof.
\end{proof}

\begin{remark}\label{rem:effective}
The reason that our proof of \cref{thm:main} does not provide an effective probability bound is that we must take $\Delta\to\infty$ sufficiently slowly in terms of the (ineffective) convergence in \cref{thm:BLS}. Apart from the ineffective $f(\Delta)$ probability bound, all relevant events can be shown to hold with probability $1-n^{-\Omega(1)}$.

We now briefly sketch how to modify the proof in order to obtain a probability bound of the form $1-(\log\log n)^{O(1)}/\sqrt{\log n}$. Instead of extracting vertices of degree at least $\Delta$, for some $\Delta$ that is slowly taken to infinity, we separately extract vertices with degree between $\Delta$ and $\log n/(\log\log n)^2$ (call these ``medium-degree vertices''), and vertices with higher degree (call these ``high-degree vertices''), for some large constant $\Delta$. One can show that there are very likely to be at least (say) $n^{4/5}$ high-degree vertices.

Now, let $T'$ be the set of medium and high degree vertices (after being ``cleaned up'' in a similar way to \cref{lem:extraction}). Add these vertices back to $G[V\setminus T']$ one-by-one, starting with the medium-degree vertices. If we do not take $\Delta\to\infty$ then it is no longer true that after adding back the medium-degree vertices we end up with corank zero whp, but if $\Delta$ is a sufficiently large constant it is still possible to show that the corank is very likely at most (say) $n^{1/3}$. (A simple calculation shows that some $\Delta = \Theta_{k}(\lambda^2)$ suffices.) Then, adding back the high-degree vertices boosts the corank all the way to zero, with probability $1-(\log\log n)^{O(1)}/\sqrt{\log n}$.

Of course, this modification does not avoid ineffectivity entirely. Indeed, one can show that after adding the medium-degree vertices, the corank is at most $n^{1/3}$ with probability at least say $1-1/n$, for sufficiently large $n$, but the cutoff for ``sufficiently large'' is an absolute constant depending on the (ineffective) convergence in \cref{thm:BLS}. It may be possible to quantify the methods used by Bordenave, Lelarge and Salez~\cite{BLS11} to prove \cref{thm:BLS}, and thereby obtain completely effective probability bounds, but we have not attempted to do so.
\end{remark}

\bibliographystyle{amsplain_initials_nobysame_nomr.bst}
\bibliography{main.bib}

\end{document}